\DeclareMathAlphabet{\mathpzc}{OT1}{pzc}{m}{it}
\newtheorem{notn}{Notation}[section]
\newtheorem{cor}{Corollary}[section]
\newtheorem{lem}{Lemma}[section]
\newtheorem{prop}{Proposition}[section]
\newtheorem{defn}{Definition}[section]
\newtheorem{rem}{Remark}[section]
\newcommand{\etal}{\textit{et} \mspace{3mu} \textit{al.}}
\newcommand{\IPq}{\bigg [ \mathbb{I} -
\mathbf{P}+\Q \bigg ]^{-1}}
\newcommand{\eg}{\geqq_{\textrm{\sffamily \textsc{Elementwise}}}}
\newcommand{\egg}{>_{\textrm{\sffamily \textsc{Elementwise}}}}
\newcommand{\ee}{\equiv_{\textrm{\sffamily \textsc{Elementwise}}}}
\newcommand{\Q}{\mathcal{C}(\Pi)}
\newcommand{\Eps}{\lambda}
\renewcommand{\P}{\mathscr{P}(\Pi)}
\newcommand{\p}{\mathfrak{p} \hspace{0pt}}
\newcommand{\G}{$G=(Q,\Sigma,\delta,\widetilde{\Pi},\chi,\mathscr{C})$ }
\newcommand{\Gt}{$G_{\theta}=(Q,\Sigma,\delta,(1-\theta)\widetilde{\Pi},\chi,\mathscr{C})$ } 
\newcommand{\E}{$\mathscr{E}_{(G,\p)} = (Q_\mathscr{F},\Sigma,
\Delta,\tilde{\pi}_\mathscr{E},\chi_\mathscr{E})$ } 
\newcommand{\Qe}{$Q_\mathscr{F}$} 
\newcommand{\QE}{Q_\mathscr{F}} 
\newcommand{\B}{\big [ \mathbb{I} - (1-\theta) \mathscr{P}(\Pi) \big ]^{-1}}
\newcommand{\infnrm}[1]{ \left \lVert  #1 \right \rVert_\infty}
\newcommand{\Crd}{\textrm{\sffamily\textsc{Card}}}
\newcommand{\myar}{\ar@[|(1.5)]}
\newcommand{\myarT}{\ar@[|(3.5)]}
\newcommand{\myarTl}{\ar@[|(2.5)]}
\newcommand{\myarL}{\ar@[|(.8)]}
\newcommand{\myarH}{\ar@[|(1.25)]}
\newcommand{\msp}{\mspace{-8mu}}
\newcommand{\BRed}{\color{BrickRed}}
\renewcommand{\thesection}{\arabic{section}}
\title{
\footnotesize \red {Full Version Submitted For Publication in \textbf{\emph{ International Journal of Control}}} \black{}\\
 \Large \textbf{Optimal Control of Infinite Horizon Partially Observable Decision Processes \\ Modeled As Generators of 
Probabilistic Regular Languages$^{\bigstar}$}
\vspace{-2pt}
\thanks{\hrule  \vspace{2pt} $^{\star}$This work has been supported in part by the U.S.
Army Research laboratory and the U.S. Army
Research Office under Grant No.
W911NF-07-1-0376.}
\thanks{$^{\ddag}$The Pennsylvania State University, University Park, PA}}
\author{ \begin{tabular}{ccccccc}
Ishanu Chattopadhyay$^{\ddag}$ & & & Asok Ray$^{\ddag}$\\
{\tt ixc128@psu.edu} & & & {\tt axr2@psu.edu} \\
\end{tabular}
\vspace{-22pt}
\\
} 
\begin{document}

\renewcommand{\thesubsectiondis}{\arabic{section}.\arabic{subsection}.}
\renewcommand{\thesubsubsectiondis}{\arabic{section}.\arabic{subsection}.\arabic{subsubsection}}
\renewcommand{\thesubsection}{\arabic{section}.\arabic{subsection}}
\renewcommand{\thesubsubsection}{\arabic{section}.\arabic{subsection}.\arabic{subsubsection}}

\allowdisplaybreaks{

\maketitle \vspace{-50pt} \allowdisplaybreaks{
\begin{abstract} 
Decision processes with incomplete state feedback have been traditionally modeled as 
Partially Observable Markov Decision Processes. In this paper, we present 
an alternative formulation based on probabilistic regular languages. The proposed approach
generalizes the recently reported work on language measure theoretic optimal control for perfectly observable 
situations and shows that such a framework is far more computationally tractable to the classical alternative.
In particular, we show that the infinite horizon decision problem under partial observation, modeled in the proposed framework, is $\Eps$-approximable
and, in general, is no harder to solve compared to the fully observable case. 
The approach is illustrated via two simple examples.
%
%
%
\end{abstract} 
\begin{keywords}
POMDP; Formal Language Theory; Partial Observation; Language Measure; Discrete Event Systems
\end{keywords}\vspace{0pt}

\section{Introduction \& Motivation}\vspace{0pt}
Planning under uncertainty is one of the oldest  and most studied problems in
research literature pertaining to automated decision making and artificial intelligence.
The central objective is to sequentially choose  control actions for  one or more agents interacting with
the operating environment such that some associated reward function is maximized for 
a pre-specified finite future (finite horizon problems) or for all possible futures (infinite 
horizon problems). Among the various mathematical formalisms studied to model and solve 
such problems, Markov Decision Processes (MDPs) have
received significant attention. A brief overview of the current state of art in MDP-based
decision theoretic planning is necessary to place this work in appropriate context.
\subsection{Markov Decision Processes}
MDP models~\cite{Put90,W93} extend the classical planning framework~\cite{MR91,PW92,PW93,KHW95} to accommodate uncertain 
effects of agent actions with the associated control algorithms attempting to maximize expected
reward and is capable, in theory, of handling  realistic decision scenarios 
arising in operations research, optimal control theory and, more recently, autonomous mission 
planning in probabilistic robotics~\cite{AK01}. In brief, a MDP consists of states and
actions with a set of action-specific probability transition matrices allowing one to compute the distribution
over model states resulting from  the execution of  a particular action sequence. 
Thus the endstate resulting from an action is not known uniquely apriori. However the agent is assumed to occupy
one and only one state at any given time, which is correctly observed, once the action sequence is complete.
Furthermore, each state is associated with a reward value and the performance of a controlled MDP
is the integrated reward over specified operation time (which can be infinite).
A partially observable Markov decision process (POMDP) is a generalization of MDPs 
which assumes  actions  to be  nondeterministic as in a MDP but relaxes the assumption 
of perfect knowledge of the current model state. 

A policy for a MDP is a mapping from the set of states to the set of actions. 
If both sets are assumed to be finite, the number of possible mappings is also finite implying that
an optimal policy can be found by conducting search over this finite set. In a POMDP, on the other hand, 
the current state can be only estimated as a distribution over underlying model states
 as a function 
of operation and observation history. 
The space of all such estimations or \textit{belief states}
is a continuous 
space although the underlying model has only a finite number of states. In contrast to MDPs,  a POMDP policy is a mapping 
from the belief space to the set of actions implying
that computation of the optimal policy demands a search over a continuum
making the problem drastically more difficult to solve. 
\subsection{Negative Results Pertaining to POMDP Solution}\label{secintroneg}
As stated above, an optimal solution to a POMDP is a policy which 
specifies  actions to execute in response to state feedback with the objective of maximizing performance. 
Policies may be \textit{deterministic} with a single action specified at each belief
 state or \textit{stochastic} which 
specify an allowable choice of actions at each state. Policies
can be also categorized as \textit{stationary}, \textit{time dependent} or \textit{history dependent}; stationary policies 
only depend on the current belief state, time dependent policies may vary with the operation time
and history dependent policies vary with the state history.
The current state of art in POMDP solution algorithms~\cite{Z01,CK98} are all
variations of Sondick's original work~\cite{S78} on value iteration  based on Dynamic Programming (DP). Value iterations,
 in general, are required to solve large numbers of linear programs at each DP update and consequently
suffer from exponential worst case complexity. 
Given that it is hard to find an optimal policy, it is natural to try to seek
one that is \textit{good} enough. Ideally, one would be reasonably satisfied to have 
an algorithm guaranteed to be fast which produces a policy that is reasonably close ($\Eps$-approximation)
to the optimal solution. Unfortunately, existence of such  algorithms is unlikely or,
in some cases, impossible. Complexity results show that POMDP solutions are nonapproximable~\cite{BRS96,LGM01,MHC99} with 
the above stated guarantee existing in general only if certain complexity classes collapse. For example,
 the optimal stationary policy for POMDPs of finite state space can be $\Eps$-approximated if and only if P=NP.
 Table~\ref{tabcomplexity} reproduced from \cite{LGM01} summarizes the known complexity results in this context.
\begin{table}[t]
\centering
\caption{ $\Eps$-Approximability  Of Optimal POMDP Solutions}\label{tabcomplexity}
\begin{tabular}{||l|c|l||}
 \hline \textbf{Policy} &  \textbf{Horizon}  &\textbf{Approximability} \\ \hline \hline
Stationary  & K & Not unless P=NP \\ \hline
Time-dependent  & K & Not unless P=NP\\ \hline
Histpry-dependent  & K & Not unless P=PSPACE\\ \hline
Stationary  & $\infty$ & Not unless P=NP\\ \hline
Time-dependent  & $\infty$ & Uncomputable\\\hline
\end{tabular}
\end{table}
Thus finding the history dependent optimal policy for even a  finite horizon POMDP is PSPACE-complete. 
Since this is a broader problem class than NP, the result suggests that POMDP problems 
are even harder than NP-complete problems. Clearly, infinite horizon POMDPs can be no easier to solve 
than finite horizon POMDPs. 
In spite of recent development of new exact and approximate 
algorithms to efficiently compute optimal solutions~\cite{CK98} and machine learning approaches to cope with uncertainty~\cite{Hans98}, 
the most efficient algorithms to date are able to compute near optimal solutions 
only for POMDPs of relatively small state spaces.
\subsection{Probabilistic Regular Language Based Models}
This work investigates  decision-theoretic planning under partial observation 
in a framework distinct from the MDP philosophy.
Decision processes are modeled as Probabilistic
Finite State Automata (PFSA) which act as generators of
probabilistic regular languages~\cite{CR08}. 
\begin{quote}
\textit{It is important to note that the PFSA model
used in this paper is conceptually very different from the notion of probabilistic automata introduced by Rabin, Paz and others~\cite{R63,P71} and 
essentially follows the formulation of p-language theoretic analysis
first reported by Garg $\etal$~\cite{G92,G92-2}.} 
\end{quote}
 The key differences between the MDP framework
and PFSA based modeling can be enumerated briefly as follows:
\begin{enumerate}
\item In both MDP and PFSA formalisms, we have the
notion of states. The notion of actions in the former is analogous to
that of events in the latter. However, unlike actions in the MDP framework, 
which can be executed at will (if defined at the current state), 
generation of events in the context of PFSA models, 
is probabilistic. Also, such events are categorized as being controllable or uncontrollable.
A controllable event can be ``disabled'' so that state change due to 
generation of that particular event is inhibited; uncontrollable events,
on the other hand, cannot be disabled in this sense.
 \item For a MDP, given a state and an action selected for execution, we can only 
compute the probability distribution over
 model states  resulting from the action; although the agent ends up in 
an unique state due to execution of the chosen action, this endstate cannot be determined 
apriori. For a PFSA, on the other hand,  given a state,  we 
only know the probability of occurrence of each alphabet symbol as the next to-be generated 
event each of which causes a transition to a apriori known unique endstate; however
 the next state is still uncertain due to the possible
execution of uncontrollable events defined at the current state. Thus, both formalisms
aim to capture the uncertain  effects of agent decisions; albeit via different mechanisms.
\item Transition probabilities in MDPs are, in general, functions of both the current state
and the action executed; $i.e.$ there are $m$ transition probability matrices where 
$m$ is the cardinality of the set of actions. PFSA models, on the other hand, have only one
transition probability matrix computed from the state based event generation probabilities.
\item It is clear that MDPs emphasize states and state-sequences; while PFSA models
emphasize events and event-sequences. For example, in POMDPs, the
observations are states; while those in the observability model for PFSAs (as adopted
in this paper) are events.
\item In other words, partial observability in MDP directly results
in not knowing the current state; in PFSA models
partial observability results in not knowing transpired events which as an effect causes confusion in the  determination of the current state.
\end{enumerate}
\begin{figure}[t]
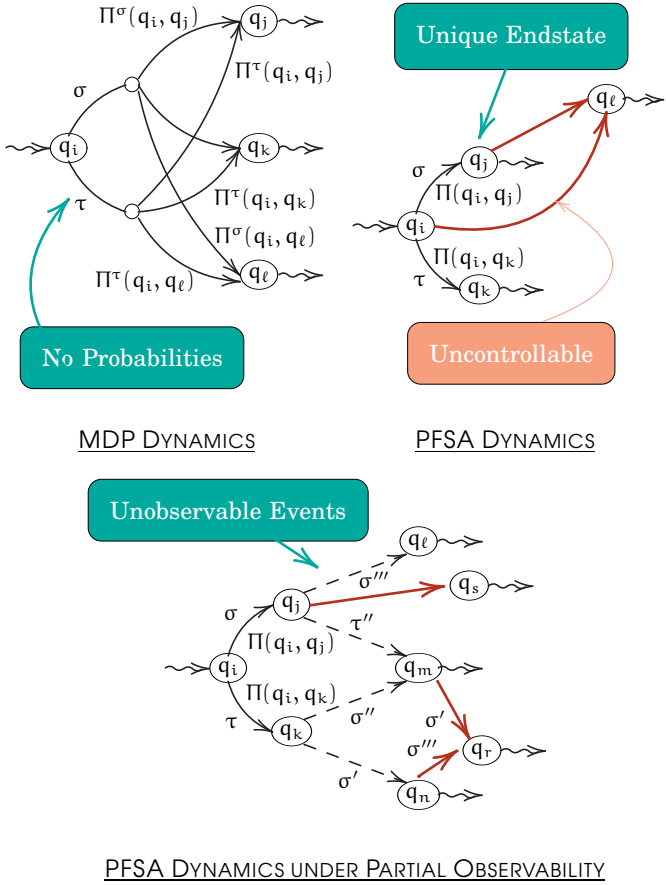

\xy
(0,0)*{\xy 0;/r.2pc/:
(0,0)*+[o][F-]{q_i};%
(10,10)*+[o][F-]{};%
(10,-10)*+[o][F-]{};%
(30,20)*+[o][F-]{q_j};%
(30,0)*+[o][F-]{q_k};%
(30,-20)*+[o][F-]{q_\ell};%
{\myar@{{}{-}{}}@[Black]@/^0.25pc/^{\txt{$\sigma$}}(0,2)*{};(9,10)*{}};
{\myar@{{}{-}{}}@[Black]@/_0.25pc/_{\txt{$\tau$}}(0,-2)*{};(9,-10)*{}};
{\myar@{{}{-}{>}}@[Black]@/^0.5pc/^{\txt{$\Pi^\sigma(q_i,q_j)$}}(11,11)*{};(27,20)*{}};
{\myar@{{}{-}{>}}@[Black]@/_0.5pc/(11,10)*{};(27,0)*{}};
{\myar@{{}{-}{>}}@[Black]@/_0.5pc/(11,9)*{};(27,-20)*{}};
{\myar@{{}{-}{>}}@[Black]@/_0.5pc/_{\txt{$\Pi^\tau(q_i,q_\ell)$}}(11,-11)*{};(27,-21)*{}};
{\myar@{{}{-}{>}}@[Black]@/_0.5pc/(11,-10)*{};(27,0)*{}};
{\myar@{{}{-}{>}}@[Black]@/_0.5pc/(11,-9)*{};(27,20)*{}};
{\myar@{{}{~}{>}}@[Black](32,0)*{};(40,0)*{}};
{\myar@{{}{~}{>}}@[Black](32,20)*{};(40,20)*{}};
{\myar@{{}{~}{>}}@[Black](32,-20)*{};(40,-20)*{}};
{\myar@{{}{~}{>}}@[Black](-11,0)*{};(-3,0)*{}};
(34,12)*{\txt{$\Pi^\tau(q_i,q_j)$}};
(31,-14)*{\txt{$\Pi^\sigma(q_i,q_\ell)$}};
(31,-8)*{\txt{$\Pi^\tau(q_i,q_k)$}};
(10,-33)*+++{\txt{No Probabilities}}*[Emerald]%
           \frm<5pt>{**}%
           *+++[white]{\txt{No Probabilities}};
{\myarTl@{{}{-}{>}}@/^1.2pc/@[Emerald](0,-35)*{};(0,-8)*{}};
\endxy};
(45,0)*{\xy
0;/r.2pc/:
(0,0)*+[o][F-]{q_i};%
(10,10)*+[o][F-]{q_j};%
(10,-10)*+[o][F-]{q_k};%
(30,20)*+[o][F-]{q_\ell};%
{\myar@{{}{-}{>}}@[Black]@/^0.25pc/^{\txt{$\sigma$}}(0,2)*{};(7,10)*{}};
{\myar@{{}{-}{>}}@[Black]@/_0.25pc/_{\txt{$\tau$}}(0,-2)*{};(7,-10)*{}};
{\myar@{{}{~}{>}}@[Black](12,10)*{};(20,10)*{}};
{\myar@{{}{~}{>}}@[Black](12,-10)*{};(20,-10)*{}};
{\myar@{{}{~}{>}}@[Black](-11,0)*{};(-3,0)*{}};
{\myarTl@{{}{-}{>}}@[BrickRed](12,12)*{};(27,20)*{}};
{\myarTl@{{}{-}{>}}@/_1.5pc/@[BrickRed](3,0)*{};(30,18)*{}};
{\myar@{{}{~}{>}}@[Black](32,20)*{};(39,20)*{}};
(10,5)*{\txt{$\Pi(q_i,q_j)$}};
(10,-5)*{\txt{$\Pi(q_i,q_k)$}};
(15,30)*+++{\txt{Unique Endstate}}*[Emerald]%
           \frm<5pt>{**}%
           *+++[white]{\txt{Unique Endstate}};
{\myarTl@{{}{-}{>}}@[Emerald](15,27)*{};(10,14)*{}};
(15,-20)*+++{\txt{Uncontrollable}}*[Melon]%
           \frm<5pt>{**}%
           *+++[white]{\txt{Uncontrollable}};
{\ar@{{}{-}{>}}@/_2pc/@[Melon](20,-15)*{};(22,4)*{}};
\endxy};
(20,-60)*{\xy
0;/r.2pc/:
(0,0)*+[o][F-]{q_i};%
(10,10)*+[o][F-]{q_j};%
(10,-10)*+[o][F-]{q_k};%
(30,20)*+[o][F-]{q_\ell};%
(30,0)*+[o][F-]{q_m};%
(30,-20)*+[o][F-]{q_n};%
(40,-13)*+[o][F-]{q_r};%
(38,13)*+[o][F-]{q_s};%
{\myar@{{}{-}{>}}@[Black]@/^0.25pc/^{\txt{$\sigma$}}(0,2)*{};(7,10)*{}};
{\myar@{{}{-}{>}}@[Black]@/_0.25pc/_{\txt{$\tau$}}(0,-2)*{};(7,-10)*{}};
{\myar@{{}{~}{>}}@[Black](-11,0)*{};(-3,0)*{}};
{\myarTl@{{}{-}{>}}@[BrickRed]^{\txt{$\sigma'''$}}(13,10)*{};(34,13)*{}};
{\myarTl@{{}{-}{>}}@[BrickRed]^{\txt{$\sigma'''$}}(30,-17)*{};(36,-13)*{}};
{\myarTl@{{}{-}{>}}@[BrickRed]_{\txt{$\sigma'$}}(33,-2)*{};(38,-11)*{}};
{\myar@{{}{--}{>}}_{\txt{$\sigma'$}}@[Black](12,-12)*{};(28,-18)*{}};
{\myar@{{}{--}{>}}@[Black](12,12)*{};(28,18)*{}};
{\myar@{{}{--}{>}}_{\txt{$\sigma''$}}@[Black](12,-8)*{};(28,-2)*{}};
{\myar@{{}{--}{>}}^{\txt{$\tau''$}}@[Black](12,8)*{};(28,2)*{}};
{\myar@{{}{~}{>}}@[Black](32,0)*{};(40,0)*{}};
{\myar@{{}{~}{>}}@[Black](32,20)*{};(40,20)*{}};
{\myar@{{}{~}{>}}@[Black](32,-20)*{};(40,-20)*{}};
{\myar@{{}{~}{>}}@[Black](40,13)*{};(48,13)*{}};
{\myar@{{}{~}{>}}@[Black](42,-13)*{};(50,-13)*{}};
(10,4)*{\txt{$\Pi(q_i,q_j)$}};
(10,-4)*{\txt{$\Pi(q_i,q_k)$}};
(0,25)*+++{\txt{Unobservable Events}}*[Emerald]%
           \frm<5pt>{**}%
           *+++[white]{\txt{Unobservable Events}};
{\myarTl@{{}{-}{>}}@[Emerald](5,22)*{};(14,16)*{}};
\endxy};
(45,-33)*{\txt{\sffamily \underline{\textsc{PFSA Dynamics}}}};
(0,-33)*{\txt{\sffamily \underline{\textsc{MDP Dynamics}}}};
(25,-90)*{\txt{\sffamily \underline{\textsc{PFSA Dynamics under Partial Observability}}}};
\endxy
\vspace{5pt}
\caption{Comparison of modeling semantics for
MDPs and PFSA}\label{figcompdyn}
\end{figure}
This paper presents an efficient algorithm for computing
the history-dependent~\cite{LGM01} optimal supervision policy for infinite horizon decision problems
modeled in the PFSA framework. The key tool used is the recently reported 
concept of a rigorous language  measure for probabilistic finite state language generators~\cite{CR06}.
This is a generalization of the 
work on language measure-theoretic optimal control for the fully observable case~\cite{CR07} and we show in this paper, that
the partially observable scenario is no harder to solve in this modeling framework.

The rest of the organized in five additional sections and two brief appendices.
Section~\ref{secprelim} introduces the preliminary concepts and relevant results from reported literature. Section~\ref{seconlineS} presents an online implementation of the language measure-theoretic supervision policy for perfectly observable plants which 
lays the framework for the subsequent development of the proposed 
optimal control policy for partially observable systems in Section~\ref{optip-intro}.
The theoretical development is verified and validated in two simulated examples 
in Section~\ref{secvalid}. The paper is summarized and concluded in Section~\ref{secsum}
with recommendations for future work.
\section{Preliminary Concepts \& Related Work}\label{secprelim}
This section presents the formal definition of the PFSA model and summarizes the concept of signed real measure of regular
languages; the details are reported in~\cite{R05}~\cite{RPP05}~\cite{CR06}. Also, we briefly review the computation of the unique maximally permissive  optimal control policy for
probabilistic finite state automata ($PFSA$)~\cite{CR07} via maximization of the language measure.
In the sequel, this measure-theoretic approach will be generalized  to address partially observable cases and is thus critical to the development presented in this paper.
\subsection{The PFSA Model}\label{PFSAmodel}
Let $G_i = ( Q,\Sigma,\delta,q_{i},Q_{m})$ be a
 finite-state automaton
model that encodes all possible evolutions of the discrete-event dynamics of a physical
plant, where $Q=\{q_k: k\in \mathcal{I}_Q\}$ is the set\index{set}
of states and $\mathcal{I}_Q\equiv\{1, 2, \cdots, n\}$ is the
index set of states; the automaton starts with the initial state
$q_{i}$; the alphabet of events is $\Sigma=\{\sigma_k: k\in
\mathcal{I}_\Sigma\}$, having $\Sigma \bigcap \mathcal{I}_Q =
\emptyset$ and $\mathcal{I}_\Sigma\equiv\{1, 2, \cdots, \ell\}$ is
the index set of events; $\delta:Q\times\Sigma \rightarrow Q$ is
the (possibly partial) function of state transitions; and
$Q_{m}\equiv \{q_{m_1},q_{m_2},\cdots ,q_{m_l}\} \subseteq Q$ is
the set of marked (i.e., accepted) states with $q_{m_k}=q_j$ for
some $j \in \mathcal{I}_Q$. Let $\Sigma^{*}$  be the Kleene closure of $\Sigma$, i.e., the set of all finite-length strings made of the events
belonging to $\Sigma$ as well as the empty string $\epsilon$ that
is viewed as the identity  of the monoid $\Sigma^{*}$ under the
operation of string concatenation, i.e., $\epsilon s=s=s\epsilon$.
The state transition map $\delta$ is recursively extended to its reflexive and transitive closure  $\delta:Q \times \Sigma^* \rightarrow Q$ by defining
\begin{subequations}
\begin{gather}
\forall q_j \in Q, \ \delta(q_j,\epsilon) = q_j \\
\forall q_j \in Q, \sigma \in \Sigma,  s \in \Sigma^\star, \ \delta(q_i,\sigma s) = \delta(\delta(q_i,\sigma),s)
\end{gather}
\end{subequations}
\vspace{-10pt}
\begin{defn}\label{Lgen}
The language $L(q_i)$ generated by a DFSA $G$ initialized at the
state $q_i\in Q$ is defined as:
\begin{equation} \label{LG-q}
    L(q_i) = \{s \in \Sigma^* \ | \ \delta^*(q_i, s) \in Q \}
\end{equation}
The language $L_m(q_i)$ marked by the DFSA $G$ initialized at the
state $q_i\in Q$ is defined as:
\begin{equation} \label{LmG-q}
    L_m(q_i) = \{s \in \Sigma^* \ | \ \delta^*(q_i, s) \in Q_m \}
\end{equation}
\end{defn}
\begin{defn}
For every $q_j\in Q$, let $L(q_i, q_j)$ denote the set of all
strings that, starting from the state $q_i$, terminate at the
state $q_j$, i.e.,
\begin{equation}
L_{i,j} = \{ s \in \Sigma^* \ | \ \delta^*(q_i, s) = q_j \in Q \}
\end{equation}
\end{defn}
To complete the specification of a probabilistic finite state automata,
 we need to specify the event generation probabilities and the state characteristic
weight vector; which we define next.
\begin{defn} \label{pitildefn}
The event generation
probabilities are specified by the function $\tilde {\pi }:Q \times \,\Sigma^\star
\to [0,\,1]$ such that $\forall q_j \in Q,\forall
\sigma _k \in \Sigma , \forall s \in \Sigma^\star ,$
\begin{enumerate}
\item[(1)] $\tilde{\pi}(q_j,{\sigma_k}) \triangleq
\tilde{\pi}_{jk} \in [0, 1)$; \ $\sum_{k} \tilde{\pi}_{jk} = 1
 - \theta, \ \mathrm{with} \  \theta \in (0,1) $; \item[(2)] $\tilde{\pi}(q_j,\sigma) = 0$ if
$\delta(q_j,\sigma)$ is undefined; $\
\tilde{\pi}(q_j,\epsilon) = 1$; \item[(3)]
$\tilde{\pi}(q_j,{\sigma_k s})= \tilde{\pi}(q_j,{\sigma_k})\
\tilde{\pi}(\delta(q_j,\sigma_k),s)$.
\end{enumerate}
\end{defn}\vspace{3pt}
\begin{notn}
The $n\times \ell$ event cost matrix $\widetilde{\Pi}$ is defined as:
$
\widetilde{\Pi}\vert_{ij}=\tilde{\pi}(q_i,\sigma_j)
$ 
\end{notn}
\begin{defn} \label{pifn}
The state transition probability $\pi: Q \times Q \rightarrow [0, 1)$,
of the DFSA\index{DFSA} $G_i$ is defined as follows:
\begin{eqnarray}
\forall q_i, q_j \in Q, \pi_{ij} =
    \displaystyle \sum_{\sigma\in\Sigma \ \mathrm{s.t.} \  \delta(q_i,\sigma)=q_j } \tilde{\pi}(q_i, \sigma)
\end{eqnarray}
\end{defn}\vspace{3pt}
\begin{notn}

The $n\times n$ state transition probability matrix $\Pi$ is
defined as
$
\Pi\vert_{ij} = \pi(q_i,q_j)
$
\end{notn}

The set $Q_m$ of  marked states is partitioned into $Q_m^+$ and
$Q_m^-$, i.e.,  $Q_m = Q_m^+ \cup Q_m^-$ and $Q_m^+ \cap Q_m^- =
\emptyset$, where $Q_m^+$ contains all \textit{good} marked states
that we desire to reach, and $Q_m^-$ contains all \textit{bad}
marked states that we want to avoid, although it may not always be
possible to completely avoid the \textit{bad} states while
attempting to reach the \textit{good} states. To characterize
this, each marked state is assigned a real value based on the
designer's perception of its impact on the system performance.
\begin{defn} \label{charfn}
The characteristic function $\chi:Q \rightarrow [-1, 1]$ that
assigns a signed real weight to state-based sublanguages
$L(q_i,q)$ is defined as:
\begin{equation}\label{chi}
    \forall q \in Q, \quad \chi(q) \in \left\lbrace
        \begin{array}{cc}
            [-1, 0), & q \in Q_m^-\\
            \{ 0 \}, & q \notin Q_m\\
            \rm{(0, 1]}, & \it{q} \in Q_m^+
        \end{array}
    \right.
\end{equation}
The state weighting vector, denoted by $\boldsymbol{\chi} =
[\chi_1 \ \chi_2 \ \cdots \ \chi_{n}]^T$, where $\chi_j\equiv
\chi(q_j)$ $\forall j \in \mathcal{I}_Q$, is called the
$\boldsymbol{\chi}$-vector. The $j$-th element $\chi_j$ of
$\boldsymbol{\chi}$-vector is the weight assigned to the
corresponding terminal state $q_j$.
\end{defn}
\begin{rem}\label{remcompreward}
 The state characteristic function $\chi:Q \rightarrow [-1,1]$ or equivalently 
the characteristic vector $\boldsymbol{\chi}$ is analogous to the notion 
of the reward function in MDP analysis. However, unlike MDP models, where
the reward (or penalty) is put on individual state-based actions, in our model, the
characteristic is put on the state itself. The similarity of the two notions is
 clarified by noting that just as MDP performance can be evaluated as
the total reward garnered as actions are executed sequentially, the
performance of a PFSA can be computed by summing the characteristics of the states
visited due to transpired event sequences.
\end{rem}
Plant models considered in this paper are
\textit{deterministic} finite state automata (plant) with
well-defined event occurrence \textit{probabilities}. In other
words, the occurrence of events is probabilistic, but the
state at which the plant ends up, \textit{given a particular
event has occurred}, is deterministic. No
emphasis is laid on the initial state of the plant $i.e.$ we allow for the fact that the plant may start from any state. Furthermore,
having defined the characteristic state weight vector
$\boldsymbol{\chi}$, it is not necessary to specify the set of
marked states, because if $\chi_i = 0$, then $q_i$ is not
marked and if $\chi_i \neq 0$, then $q_i$ is marked.

\begin{defn}\label{contapp}{(Control Philosophy)}
If $q_i \xrightarrow[\sigma]{} q_k$,
and the  event $\sigma$ is disabled at state
$q_i$, then the  supervisory action is to
prevent the plant from making a transition to the state $q_k$, by
 forcing it to stay at the original state
$q_i$.  Thus disabling any transition $\sigma$ at a given state $q$ results
in   deletion of the original transition and appearance of the self-loop $\delta(q,\sigma) = q$ with the occurrence
probability of $\sigma$ from the state $q$ remaining unchanged
in the supervised and unsupervised plants.
\end{defn}\vspace{0pt}
\begin{defn}\label{optifull-contdef}{(Controllable Transitions)}
For a given plant, transitions that can be disabled in the
sense of Definition~\ref{contapp} are defined to be
\textit{controllable} transitions. The set of
controllable transitions in a plant is denoted $\mathscr{C}$.
\textit{Note controllability is state-based.}
\end{defn}

It follows that plant models can be specified by the
sextuplet:
\begin{gather}\label{sextuple}
G=(Q,\Sigma,\delta,\widetilde{\Pi},\boldsymbol{\chi},\mathscr{C})
\end{gather} \vspace{-10pt}

\subsection{Formal Language Measure for Terminating Plants}  \label{BriefReview_term}
The formal language measure is first defined for terminating
plants~\cite{G92} with sub-stochastic event generation
probabilities, i.e., the event generation probabilities at
each state summing to strictly less than unity.
%
In general, the marked language $L_m(q_i)$ consists of both good
and bad event strings that, starting from the initial state $q_i$,
lead to $Q_m^+$ and $Q_m^-$ respectively. Any event string
belonging to the language $L^0(q_i) = L(q_i) - L_m(q_i)$ leads to one
of the non-marked states belonging to $Q - Q_m$ and $L^0$ does not
contain any one of the good or bad strings. Based on the
equivalence classes defined in the Myhill-Nerode Theorem~\cite{HMU01}, the
regular languages $L(q_i)$ and $L_m(q_i)$ can be expressed as:
\begin{equation} \label{LGq}
L(q_i) = \bigcup_{q_k \in Q} L_{i,k}
\end{equation}
\begin{equation} \label{LmGq}
L_m(q_i) = \bigcup_{q_k \in Q_m} L_{i,k} =  L_m^+ \cup L_m^-
\end{equation}
where the sublanguage  $L_{i,k}\subseteq L(q_i)$ having the initial
state $q_i$ is uniquely labelled by the terminal state $q_k, k \in
\mathcal{I}_Q$ and $L_{i,j} \cap L_{i,k} = \emptyset$ $\forall j
\neq k$; and $L_m^+\equiv\bigcup_{q_k \in Q_m^+} L_{i,k}$ and
$L_m^-\equiv\bigcup_{q_k \in Q_m^-} L_{i,k}$ are good and bad
sublanguages of $L_m(q_i)$, respectively. Then, $L^0 =
\bigcup_{q_k \notin Q_m} L_{i,k}$ and $L(q_i) = L^0 \cup L_m^+
\cup L_m^-$.\vspace{3pt}

A signed real measure $\mu^i:{2^{L(q_i)}} \rightarrow
\mathbb{R}\equiv(-\infty,+\infty)$ is constructed on the
$\sigma$-algebra $2^{L(q_i)}$ for any $i \in  \mathcal{I}_Q$;
interested readers are referred to~\cite{R05}~\cite{RPP05} for the
details of measure-theoretic definitions and results. With the
choice of this $\sigma$-algebra, every singleton set made of an
event string $s \in L(q_i)$ is a measurable set. By Hahn
Decomposition Theorem~\cite{R88}, each of these measurable sets
qualifies itself to have a numerical value based on the above
state-based decomposition of $L(q_i)$ into $L^0$(null),
$L^{+}$(positive), and $L^-$(negative) sublanguages.\vspace{3pt}

\begin{defn} \label{measurefn}
Let $\omega \in L(q_i, q_j)\subseteq 2^{L(q_i)}$. The signed
real measure $\mu^i$ of every singleton string set $ \{ \omega
\} $ is defined as:
\begin{equation}
\mu^i(\{\omega \})=\tilde{\pi}(q_i, \omega)\chi(q_j)
\end{equation}
The signed real measure of a sublanguage $L_{i,j} \subseteq
L(q_i)$ is defined as:
\begin{equation}\label{measureEqn}
\mu_{i,j} = \mu^i(L(q_i, q_j)) = \left( \sum_{\omega\in
L(q_i, q_j)} \tilde{\pi} (q_i, \omega)\right)\chi_j
\end{equation}
\end{defn}

Therefore, the signed real measure of the language  of a DFSA
$G_i$ initialized at $q_i \in Q$, is defined as
\begin{equation} \label{fullObsMeasure}
\mu_i= \mu^i(L(q_i)) = \sum_{j\in \mathcal{I}_Q}
\mu^i(L_{i,j})
\end{equation}

It is shown in \cite{R05}~\cite{RPP05} that the language measure
in Eq. (\ref{fullObsMeasure}) can be expressed as
\begin{gather} \label{algbrameasure}
\mu_i = \sum_{j\in \mathcal{I}_Q} {\pi}_{ij} \mu_j +
\chi_i
\end{gather}

The language measure vector, denoted as \mbox{\boldmath $\mu$} =
$[\mu_1 \ \mu_2 \ \cdots \ \mu_n]^{T}$, is called the
\mbox{\boldmath $\mu$}-vector.  In vector form, Eq.
(\ref{algbrameasure}) becomes
\begin{equation}
\boldsymbol{\mu} = \Pi\boldsymbol{\mu}+\boldsymbol{\chi}
\end{equation}
whose solution is given by
\begin{equation} \label{fullObservation}
\boldsymbol{\mu} = (\mathbb{I} - \Pi)^{-1}
\boldsymbol{\chi}
\end{equation}
The inverse in Eq. (\ref{fullObservation}) exists for
terminating plant models~\cite{G92}\cite{G92-2} because
$\Pi$ is a contraction
operator~\cite{R05}~\cite{RPP05} due to the strict inequality
$ \sum_j \pi_{ij} < 1$. The residual $\theta_i = 1 - \sum_j
\pi_{ij}$ is  referred to as the termination probability for
state $q_i \in Q$. We extend the analysis to non-terminating
plants~\cite{G92}\cite{G92-2} with stochastic transition probability matrices ($i.e.$
with $\theta_i = 0,\ \forall q_i \in Q$) by renormalizing the
language measure~\cite{CR06} with respect to the uniform
termination probability of a limiting terminating model as
described next.

Let $\widetilde{\Pi}$ and $\Pi$ be the stochastic event
generation and transition probability matrices for a
non-terminating plant $G_i=(
Q,\Sigma,\delta,q_{i},Q_{m})$. We consider the
terminating plant $G_i(\theta)$ with the same DFSA structure $
( Q,\Sigma,\delta,q_{i},Q_{m})$ such that the
event generation probability matrix is given by
$(1-\theta)\widetilde{\Pi}$ with $\theta \in (0,1)$ implying
that the state transition probability matrix is  $(1 -
\theta)\Pi$.
\begin{defn}{(Renormalized Measure) } \label{defrenormmeas}
The renormalized measure {\small $\nu^i_\theta : 2^{L(q_i)}
\rightarrow [-1,1]$} for the $\theta$-parametrized terminating plant $G_i(\theta)$ is  defined as:
\begin{gather}
\forall \omega \in L(q_i), \ \nu^i_\theta(\{\omega\}) = \theta \mu^i(\{\omega\})
\end{gather}
The corresponding matrix form is given by
\begin{gather} \label{renormalizedMeasure}
\boldsymbol{\nu_\theta} = \theta \ \boldsymbol{\mu}=
\theta \ [I-(1-\theta)\Pi]^{-1} \boldsymbol{\chi} \
\mathrm{with} \ \theta \in (0,1)
\end{gather}
We note that the vector representation allows for the following notational simplification
\begin{gather}
  \nu_\theta^i(L(q_i)) = \boldsymbol{\nu_\theta} \big \vert_i
\end{gather}
The renormalized measure for the non-terminating plant $G_i$ is defined to be $\displaystyle \lim_{\theta \rightarrow 0^+}\nu_\theta^i$.
\end{defn}
The following results are retained for the sake of completeness. Complete proofs can be found in \cite{CR06}\cite{C-PhD}.
\vspace{0pt}
\begin{prop}\label{Proposition2.1}
The limiting  measure vector $\mbox{\boldmath
$\nu$}_0\triangleq \lim_{\theta \rightarrow 0^+}
\mbox{\boldmath $\nu$}_\theta$ exists and $||\mbox{\boldmath
$\nu$}_0||_{\infty} \leq 1$.
\end{prop}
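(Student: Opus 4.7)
The plan is to exploit the Neumann series expansion of the resolvent and then identify the limit with the ergodic (Cesàro/Abel) projector of the stochastic transition matrix $\Pi$.

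First I would observe that for any $\theta \in (0,1)$, the matrix $(1-\theta)\Pi$ has spectral radius $\leqq 1-\theta < 1$ (since $\Pi$ is row-stochastic and $\|\Pi\|_\infty = 1$), so the Neumann series converges and
\begin{equation}
\big[\mathbb{I}-(1-\theta)\Pi\big]^{-1} = \sum_{k=0}^{\infty}(1-\theta)^k \Pi^k .
\end{equation}
Consequently $\theta\,\big[\mathbb{I}-(1-\theta)\Pi\big]^{-1} = \theta\sum_{k=0}^\infty (1-\theta)^k \Pi^k$. Applying this matrix to the all-ones vector $\mathbf{1}$ and using $\Pi^k\mathbf{1}=\mathbf{1}$ gives
\begin{equation}
\theta\,\big[\mathbb{I}-(1-\theta)\Pi\big]^{-1}\mathbf{1} = \theta\sum_{k=0}^\infty(1-\theta)^k \mathbf{1} = \mathbf{1},
\end{equation}
and since all entries of the resolvent are non-negative, $\theta\,[\mathbb{I}-(1-\theta)\Pi]^{-1}$ is itself a stochastic matrix for every $\theta\in(0,1)$.

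Next I would obtain the $\infty$-norm bound. Because $\boldsymbol{\chi}\in[-1,1]^n$ by Definition~\ref{charfn}, and any stochastic matrix $S$ satisfies $\|S\boldsymbol{v}\|_\infty \leqq \|\boldsymbol{v}\|_\infty$, we immediately get $\|\boldsymbol{\nu}_\theta\|_\infty \leqq \|\boldsymbol{\chi}\|_\infty \leqq 1$ uniformly in $\theta$. This bound will survive the passage to the limit.

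For the existence of $\boldsymbol{\nu}_0$, the critical ingredient is that the Abel sum $\theta\sum_{k\geqq 0}(1-\theta)^k\Pi^k$ converges as $\theta\to 0^+$ to the ergodic projector $\Pi^\star$ of $\Pi$, i.e., the projector onto the eigenspace of $\Pi$ at eigenvalue $1$ along the range of $\mathbb{I}-\Pi$. Equivalently, since $\Pi$ is a stochastic matrix its Cesàro averages $N^{-1}\sum_{k=0}^{N-1}\Pi^k$ converge to $\Pi^\star$, and Abel summability then follows from the standard Hardy--Littlewood-type Tauberian correspondence between Cesàro and Abel means for bounded sequences. Thus
\begin{equation}
\boldsymbol{\nu}_0 \;=\; \lim_{\theta\to 0^+}\theta\,\big[\mathbb{I}-(1-\theta)\Pi\big]^{-1}\boldsymbol{\chi} \;=\; \Pi^\star \boldsymbol{\chi},
\end{equation}
and $\|\boldsymbol{\nu}_0\|_\infty = \|\Pi^\star\boldsymbol{\chi}\|_\infty \leqq 1$ because $\Pi^\star$ is again row-stochastic (it is the limit of stochastic matrices).

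The main obstacle, and the only nontrivial analytic content, is establishing that the Abel limit actually exists for an arbitrary, possibly periodic or reducible, stochastic $\Pi$; one cannot appeal to $\Pi^k\to\Pi^\star$, which fails in the periodic case. The clean way around this is to decompose $\mathbb{R}^n = \ker(\mathbb{I}-\Pi)\oplus\mathrm{range}(\mathbb{I}-\Pi)$ (a direct sum since $\Pi$ is stochastic and hence has a semi-simple eigenvalue at $1$), verify the limit separately on each summand---on $\ker(\mathbb{I}-\Pi)$ the operator $\theta[\mathbb{I}-(1-\theta)\Pi]^{-1}$ acts as the identity, while on $\mathrm{range}(\mathbb{I}-\Pi)$ the matrix $\mathbb{I}-\Pi$ is invertible so $\theta[\mathbb{I}-(1-\theta)\Pi]^{-1}\to 0$---and recombine to identify the limit with $\Pi^\star$. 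Everything else is bookkeeping.
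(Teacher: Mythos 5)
Your argument is correct: the uniform bound $\|\boldsymbol{\nu}_\theta\|_\infty\leq 1$ follows from the stochasticity of $\theta[\mathbb{I}-(1-\theta)\Pi]^{-1}$, and existence of the limit follows from the decomposition $\mathbb{R}^{\Crd(Q)}=\ker(\mathbb{I}-\Pi)\oplus\mathrm{range}(\mathbb{I}-\Pi)$, which is legitimate because the eigenvalue $1$ of a stochastic matrix is semi-simple. The paper itself gives no proof (it defers to the cited references), but your identification of the limit with the ergodic projector applied to $\boldsymbol{\chi}$ is precisely the Ces\`{a}ro characterization the paper records as Proposition~\ref{Proposition2.2}, so your route is the intended one; the only quibble is that ``Ces\`{a}ro summability implies Abel summability'' is an Abelian (Frobenius) theorem rather than a Tauberian one, a slip that your self-contained kernel/range argument renders harmless.
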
\vspace{0pt}
\begin{prop} \label{Proposition2.2}
Let $\Pi$ be the stochastic transition matrix of a non-terminating PFSA~\cite{G92,G92-2}.
Then, as the parameter $\theta \rightarrow 0^+$, the limiting
 measure vector is obtained as: $
\boldsymbol{\nu}_0 = \Q\boldsymbol{\chi} $ where the
matrix operator $\displaystyle \Q\triangleq \lim_{k
\rightarrow \infty}
\frac{1}{k}\sum_{j=0}^{k-1}\Pi^{j}$  is the 
Cesaro limit~\cite{BR97,Berman1979} of the
stochastic transition matrix $\Pi$.
\end{prop}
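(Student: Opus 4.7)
The plan is to recognise $\theta[\mathbb{I}-(1-\theta)\Pi]^{-1}$ as a discrete \emph{Abel mean} of the iterates of $\Pi$ and invoke the classical Abelian principle that Ces\`aro convergence implies Abel convergence to the same limit. Proposition~\ref{Proposition2.1} already secures the existence of $\boldsymbol{\nu}_0$, so it suffices to identify the limit.

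First I would exploit that $\Pi$ is row-stochastic, whence $\infnrm{(1-\theta)\Pi} = 1-\theta < 1$ for every $\theta\in(0,1)$. This legitimises the Neumann expansion
\begin{equation}
\theta\bigl[\mathbb{I}-(1-\theta)\Pi\bigr]^{-1} \;=\; \sum_{k=0}^{\infty}\theta(1-\theta)^{k}\Pi^{k}.
\end{equation}
Next I would perform a summation by parts. Setting $S_{n} = \sum_{j=0}^{n-1}\Pi^{j}$ and $\sigma_{n} = S_{n}/n$, a routine rearrangement recasts the series as
\begin{equation}
\theta\bigl[\mathbb{I}-(1-\theta)\Pi\bigr]^{-1} \;=\; \theta^{2}\sum_{n=1}^{\infty} n(1-\theta)^{n-1}\sigma_{n}.
\end{equation}
The scalar weights $w_{n}(\theta) := \theta^{2}n(1-\theta)^{n-1}$ are nonnegative and sum to unity (they are the p.m.f.\ of a negative-binomial distribution with mean $(2-\theta)/\theta$), so the right-hand side is an elementwise convex combination of the Ces\`aro averages $\sigma_{n}$.

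Now classical Perron--Frobenius / ergodic theory for finite stochastic matrices guarantees that $\sigma_{n}\to\Q$ as $n\to\infty$, even in the presence of periodic communication classes. Given any $\varepsilon>0$, choose $N$ so that $\infnrm{\sigma_{n}-\Q} < \varepsilon$ for all $n\geq N$. The finite head $\sum_{n<N}w_{n}(\theta)\sigma_{n}$ vanishes in norm as $\theta\to 0^{+}$ since each $w_{n}(\theta)\leq \theta^{2}n \to 0$ at fixed $n$, while the tail $\sum_{n\geq N}w_{n}(\theta)(\sigma_{n}-\Q)$ is bounded by $\varepsilon$ uniformly in $\theta$ because the weights sum to at most one. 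Letting $\theta\to 0^{+}$ and then $\varepsilon\to 0$ gives $\lim_{\theta\to 0^{+}}\theta[\mathbb{I}-(1-\theta)\Pi]^{-1} = \Q$, and applying both sides to $\boldsymbol{\chi}$ yields $\boldsymbol{\nu}_{0} = \Q\boldsymbol{\chi}$.

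The main obstacle is precisely the Abel--Ces\`aro interchange in the last step: one must be careful that the two limiting procedures (the $\theta\to 0^{+}$ in the resolvent and the $k\to\infty$ in the Ces\`aro mean) commute. It is essential that the \textbf{Ces\`aro} mean be used rather than $\lim_{k}\Pi^{k}$, since the latter need not exist when $\Pi$ has nontrivial periodicities. An alternative, more algebraic route that avoids the weight-concentration estimate would be to split $\Pi = \Q + N$ via the ergodic projection, use the identities $\Q^{2}=\Q$ and $\Pi\Q=\Q\Pi=\Q$, and verify directly that $\theta[\mathbb{I}-(1-\theta)N]^{-1}(\mathbb{I}-\Q)\to 0$ as $\theta\to 0^{+}$ while the $\Q$-component is preserved by the resolvent.
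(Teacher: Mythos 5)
Your proof is correct. Note that the paper itself gives no proof of this proposition (it defers to \cite{CR06} and the thesis \cite{C-PhD}), so the comparison is with the standard argument rather than with anything in the text: your route --- Neumann expansion of $\theta[\mathbb{I}-(1-\theta)\Pi]^{-1}$ into the Abel mean $\sum_k\theta(1-\theta)^k\Pi^k$, summation by parts to rewrite it as the convex combination $\sum_n \theta^2 n(1-\theta)^{n-1}\sigma_n$ of Ces\`aro averages, and the head/tail estimate showing the weights concentrate at large $n$ as $\theta\to 0^+$ --- is exactly the classical Abelian (Frobenius) theorem specialized to matrix powers, and it is essentially the argument used in the cited references. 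All the steps check out: the weights do sum to one, $\infnrm{\sigma_n}=1$ so the head is $O(\theta^2N^2)$ at fixed $N$, and the existence of $\lim_n\sigma_n=\Q$ for a finite stochastic matrix is standard (and is implicitly presupposed by the proposition's own statement of $\Q$). The only cosmetic gap is that in the head you should subtract $\Q$ as well (i.e., bound $\sum_{n<N}w_n(\theta)(\sigma_n-\Q)$, not $\sum_{n<N}w_n(\theta)\sigma_n$ alone), which costs nothing since both terms are uniformly bounded. Your sketched alternative via the ergodic projection $\Pi=\Q+N$ with $\Q N=N\Q=0$ also works, but requires the additional observation that the peripheral eigenvalues of $N$ are semisimple and distinct from $1$, so the resolvent $[\mathbb{I}-(1-\theta)N]^{-1}$ stays bounded as $\theta\to 0^+$; the Abelian argument you carried out in full avoids any spectral input beyond Ces\`aro convergence.
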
\vspace{0pt}
\begin{cor} (to Proposition~\ref{Proposition2.2}) \label{Corollary2.1}
The expression $\Q\boldsymbol{\nu}_\theta$ is independent of
$\theta$. Specifically, the following identity holds for all
$\theta \in (0, 1)$.
\begin{equation}\label{thetaIndependence01}
\Q\boldsymbol{\nu}_\theta = \Q\boldsymbol{\chi}
\end{equation}
\end{cor}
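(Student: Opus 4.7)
The plan is to pass from the closed-form expression for $\boldsymbol{\nu}_\theta$ given in Eq.~(\ref{renormalizedMeasure}) to a fixed-point relation, and then exploit the left-invariance of the Cesaro limit under $\Pi$, namely $\mathcal{C}(\Pi)\Pi = \mathcal{C}(\Pi)$. Once this latter identity is in hand, the corollary drops out by a single algebraic manipulation in which the $\theta$-dependent terms cancel.

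Concretely, I would proceed as follows. First, observe that because $\Pi$ is stochastic, the partial Cesaro averages $\frac{1}{k}\sum_{j=0}^{k-1}\Pi^{j}$ are uniformly bounded and their limit exists (Proposition~\ref{Proposition2.2} already invokes this fact, citing Berman--Plemmons). Using the telescoping identity
\begin{equation}
\frac{1}{k}\sum_{j=0}^{k-1}\Pi^{j}\Pi \;=\; \frac{1}{k}\sum_{j=1}^{k}\Pi^{j} \;=\; \frac{1}{k}\sum_{j=0}^{k-1}\Pi^{j} \;+\; \frac{\Pi^{k}-\mathbb{I}}{k},
\end{equation}
and letting $k \to \infty$ (the last term vanishes because $\Pi^{k}$ stays bounded), we obtain $\mathcal{C}(\Pi)\Pi = \mathcal{C}(\Pi)$.

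Next, I would rewrite Eq.~(\ref{renormalizedMeasure}) in fixed-point form by multiplying through by $[\mathbb{I}-(1-\theta)\Pi]$, giving
\begin{equation}
\boldsymbol{\nu}_\theta \;=\; \theta\,\boldsymbol{\chi} \;+\; (1-\theta)\,\Pi\,\boldsymbol{\nu}_\theta.
\end{equation}
Premultiplying both sides by $\mathcal{C}(\Pi)$ and using $\mathcal{C}(\Pi)\Pi = \mathcal{C}(\Pi)$ yields
\begin{equation}
\mathcal{C}(\Pi)\,\boldsymbol{\nu}_\theta \;=\; \theta\,\mathcal{C}(\Pi)\,\boldsymbol{\chi} \;+\; (1-\theta)\,\mathcal{C}(\Pi)\,\boldsymbol{\nu}_\theta,
\end{equation}
so that $\theta\,\mathcal{C}(\Pi)\,\boldsymbol{\nu}_\theta = \theta\,\mathcal{C}(\Pi)\,\boldsymbol{\chi}$, and since $\theta \in (0,1)$ we may divide by $\theta$ to conclude $\mathcal{C}(\Pi)\,\boldsymbol{\nu}_\theta = \mathcal{C}(\Pi)\,\boldsymbol{\chi}$.

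The only step that is not a one-line manipulation is the verification of $\mathcal{C}(\Pi)\Pi = \mathcal{C}(\Pi)$, and even this is routine once the existence of the Cesaro limit is granted. Since Proposition~\ref{Proposition2.2} already cites this existence, the main content of the proof is the cancellation in the third displayed equation above, where the coefficient $(1-\theta)$ is absorbed by the invariance of $\mathcal{C}(\Pi)$ under right-multiplication by $\Pi$, leaving a $\theta$-free identity. No additional structure of $\boldsymbol{\chi}$ is used, so the identity in fact holds for every characteristic vector, which is stronger than what is required.
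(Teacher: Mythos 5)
Your proof is correct: the fixed-point identity $\boldsymbol{\nu}_\theta = \theta\boldsymbol{\chi} + (1-\theta)\Pi\boldsymbol{\nu}_\theta$ obtained from Eq.~(\ref{renormalizedMeasure}), combined with the Cesaro invariance $\Q\Pi = \Q$ (which your telescoping argument establishes cleanly, using only boundedness of $\Pi^k$), gives the cancellation $\theta\,\Q\boldsymbol{\nu}_\theta = \theta\,\Q\boldsymbol{\chi}$ and hence the claim. The paper itself states this corollary without proof, deferring to the cited references, and your argument is precisely the standard one expected there, so there is nothing to flag.
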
\vspace{0pt}
\begin{notn}\label{notpure}
The  linearly independent orthogonal
set $ \{ v^i \in \mathbb{R}^{\Crd(Q)}: v^i_j = \delta_{ij} \}$ is denoted as $\mathcal{B}$ where $\delta_{ij}$ denotes the Kr\"{o}necker delta function.
We note that there is a one-to-one onto mapping between the states $q_i \in Q$ and the elements of $\mathcal{B}$, namely,
\begin{gather}\label{eqpure}
 q_i \mapsto \alpha \iff \alpha_k = \left \{ \begin{array}{ll}
                                              1 & \textrm{if} \ k = i \\
					      0 & \textrm{otherwise}
                                             \end{array}
\right.
\end{gather}
 \end{notn}

\begin{defn}
 For any non-zero vector $v \in \mathbb{R}^{\Crd(Q)}$, the normalizing function $\mathscr{N} :\mathbb{R}^{\Crd(Q)}\setminus\boldsymbol{0} \rightarrow \mathbb{R}^{\Crd(Q)}$ is defined 
as $\mathscr{N}(v) = \frac{v}{\sum_i v_i}$.
\end{defn}
\subsection[Optimal Supervision Problem]{The Optimal Supervision Problem: Formulation \& Solution}\label{subsecformulatesoln}
A supervisor disables a subset of the set $\mathscr{C}$ of
controllable transitions and hence there is a bijection
between the set of all possible supervision policies and the
power set $2^{\mathscr{C}}$. That is, there exists $2^{\vert
\mathscr{C} \vert}$ possible supervisors and each supervisor
is uniquely identifiable with a subset of $\mathscr{C}$ and
the corresponding language measure $\boldsymbol{\nu}_\theta$ allows a quantitative comparison of
different policies.
\begin{defn}\label{superior}
For an unsupervised  plant
\G, let
$G^{\dag}$ and $G^{\ddag}$ be the supervised plants with sets
of disabled transitions, $\mathscr{D}^{\dag}\subseteq
\mathscr{C}$ and $\mathscr{D}^{\ddag}\subseteq \mathscr{C}$,
respectively, whose measures are $\boldsymbol{\nu}^{\dag}$ and
$\boldsymbol{\nu}^{\ddag}$. Then, the supervisor that disables
$\mathscr{D}^{\dag}$ is defined to be superior to the
supervisor that disables $\mathscr{D}^{\ddag}$  if
$\boldsymbol{\nu}^{\dag} \eg \boldsymbol{\nu}^{\ddag}$ and
strictly superior if $\boldsymbol{\nu}^{\dag} \egg
\boldsymbol{\nu}^{\ddag}$.
\end{defn}
\vspace{0pt}
\begin{defn}\label{pdef}{(Optimal Supervision Problem)}
Given a (non-terminating) plant
\G, the
problem is to compute a supervisor that disables a subset
$\mathscr{D}^\star \subseteq \mathscr{C}$, such that $
\forall \mathscr{D}^{\dag} \subseteq \mathscr{C}, \boldsymbol{\nu}^{\star} \eg \boldsymbol{\nu}^{\dag} $ where
$\boldsymbol{\nu}^{\star}$ and $\boldsymbol{\nu}^{\dag}$ are
the measure vectors of the supervised plants $G^{\star}$ and
$G^{\dag}$ under $\mathscr{D}^{\star}$ and $\mathscr{D}^\dag$,
respectively.
\end{defn}
\begin{rem}\label{remtheta}
The solution to the optimal supervision problem  is obtained
in \cite{CR07,C-PhD} by designing an optimal policy for a
\textit{terminating} plant~\cite{G92,G92-2} with a
substochastic transition probability matrix
$(1-\theta)\widetilde{\Pi}$ with $\theta \in (0,1)$. To ensure
that the computed optimal policy coincides with the one for
$\theta = 0$, the suggested algorithm chooses a \textit{small}
value for $\theta$ in each iteration step of the design
algorithm. However, choosing $\theta$ too small may cause
numerical problems in convergence. Algorithm~\ref{Algorithm01} (See Appendix~\ref{appen})
computes the critical lower bound $\theta_\star$ (i.e., how small a $\theta$ is actually required).  In conjunction with
Algorithm~\ref{Algorithm01}, the optimal supervision
problem is solved by use of Algorithm~\ref{Algorithm02} for a generic $PFSA$ as reported in~\cite{CR07}\cite{C-PhD}.
\end{rem}
The following  results in Proposition~\ref{proposition-4.2} are critical to development in the sequel and hence are
presented here without proof. The
complete proofs are available in~\cite{CR07}\cite{C-PhD}.
\begin{prop}\label{proposition-4.2}
\begin{enumerate}
 \item {(Monotonicity) }Let $\boldsymbol{\nu}^{[k]}$ be the language measure vector
computed in the $k^{th}$ iteration of
Algorithm~\ref{Algorithm02}. The measure vectors computed by
the algorithm form an elementwise non-decreasing sequence,
i.e., $\boldsymbol{\nu}^{[k+1]} \eg \boldsymbol{\nu}^{[k]} \
\forall k$.
\item {(Effectiveness) }
Algorithm~\ref{Algorithm02} is an effective
procedure~\cite{HMU01}, i.e., it is guaranteed to terminate.
\item {(Optimality) }
The supervision policy computed by Algorithm~\ref{Algorithm02}
is optimal in the sense of Definition~\ref{pdef}.
\item {(Uniqueness) }
Given an unsupervised plant $G$, the optimal supervisor
$G^{\star}$, computed by Algorithm~\ref{Algorithm02}, is
unique in the sense that it is maximally permissive among all
possible supervision policies with optimal performance. That
is, if $\mathscr{D}^{\star}$ and $\mathscr{D}^{\dag}$ are the
disabled transition sets, and $\boldsymbol{\nu}^{\star}$ and
$\boldsymbol{\nu}^{\dag}$ are the language measure vectors for
$G^{\star}$ and an arbitrarily supervised plant $G^{\dag}$,
respectively, then $ \boldsymbol{\nu}^{\star} \ee
\boldsymbol{\nu}^{\dag} \Longrightarrow \mathscr{D}^{\star}
\subset \mathscr{D}^{\dag}\subseteq \mathscr{C} $
\end{enumerate}
\end{prop}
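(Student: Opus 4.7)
My plan is to establish the four assertions in the order given, since each subsequent claim leans on the preceding ones. Throughout, I take Algorithm~\ref{Algorithm02} to be the natural greedy scheme: at iteration $k$, compute $\boldsymbol{\nu}^{[k]}=\theta[\mathbb{I}-(1-\theta)\Pi^{[k]}]^{-1}\boldsymbol{\chi}$ for the current substochastic transition matrix $\Pi^{[k]}$, then disable every controllable transition $q_i\xrightarrow{\sigma}q_j$ for which $\nu^{[k]}_j<\nu^{[k]}_i$, replacing it by a self-loop at $q_i$ as prescribed by Definition~\ref{contapp}. The value $\theta$ is taken below the critical threshold $\theta_\star$ supplied by Algorithm~\ref{Algorithm01} so that the ordering of coordinates of $\boldsymbol{\nu}_\theta$ coincides with the one it would have at $\theta=0$.

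For \textbf{Monotonicity}, I would write the update in closed form. If $\Pi^{[k+1]}$ differs from $\Pi^{[k]}$ only by moving the probability mass $\tilde{\pi}(q_i,\sigma)$ of a disabled transition from column $j$ into column $i$, then the fixed-point equation $\boldsymbol{\nu}^{[k+1]}=(1-\theta)\Pi^{[k+1]}\boldsymbol{\nu}^{[k+1]}+\theta\boldsymbol{\chi}$ can be rewritten as $\boldsymbol{\nu}^{[k+1]}=(1-\theta)\Pi^{[k]}\boldsymbol{\nu}^{[k+1]}+\theta\boldsymbol{\chi}+(1-\theta)\tilde{\pi}(q_i,\sigma)(\nu^{[k+1]}_i-\nu^{[k+1]}_j)v^i$, where $v^i\in\mathcal{B}$ is the indicator vector of Notation~\ref{notpure}. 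Rearranging yields $\boldsymbol{\nu}^{[k+1]}-\boldsymbol{\nu}^{[k]}=[\mathbb{I}-(1-\theta)\Pi^{[k]}]^{-1}(1-\theta)\tilde{\pi}(q_i,\sigma)(\nu^{[k+1]}_i-\nu^{[k+1]}_j)v^i$. Because the resolvent $[\mathbb{I}-(1-\theta)\Pi^{[k]}]^{-1}$ is entrywise nonnegative (Neumann series) and the disabling rule guarantees $\nu^{[k]}_i>\nu^{[k]}_j$ (which, after verifying the sign is preserved in the next iterate, implies $\nu^{[k+1]}_i\geqq\nu^{[k+1]}_j$), the increment is elementwise nonnegative. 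The main subtlety here is verifying that a disabling that looks profitable at step $k$ remains profitable in terms of the new measure; I would argue this by a fixed-point/contraction argument using the sub-stochasticity of $(1-\theta)\Pi^{[k]}$.

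\textbf{Effectiveness} follows immediately: the supervisor is specified by a subset of the finite set $\mathscr{C}$, so there are only $2^{|\mathscr{C}|}$ possible transition matrices, and strict monotonicity on the subsequence of policies that actually change coordinates (which I would verify from the explicit increment formula above) forbids revisiting a policy. Hence the algorithm halts in finitely many steps.

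For \textbf{Optimality}, I would use the termination condition as a local-optimality certificate and then promote it to a global one via an exchange argument. At termination, every enabled controllable transition $q_i\xrightarrow{\sigma}q_j$ satisfies $\nu^{\star}_j\geqq \nu^{\star}_i$, while every disabled one satisfies the reverse. Suppose $G^{\dag}$ with disabled set $\mathscr{D}^{\dag}$ achieves $\boldsymbol{\nu}^{\dag}\egg \boldsymbol{\nu}^{\star}$ at some coordinate. Comparing the two fixed-point systems coordinate by coordinate using the same resolvent identity as in the monotonicity step, I would derive a contradiction by showing that the symmetric difference $\mathscr{D}^\star\triangle \mathscr{D}^\dag$ contains a transition whose (dis)abling is inconsistent with the sign structure of $\boldsymbol{\nu}^{\star}-\boldsymbol{\nu}^{\dag}$. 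This is the step I expect to be the main obstacle, because the sign analysis requires care to ensure that one does not accidentally compare coordinates whose order flips between the two policies; a clean way is to localize the argument to a coordinate that maximizes $\nu^{\dag}_i-\nu^{\star}_i$ and use that $\Pi^\star$ is substochastic.

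Finally, \textbf{Uniqueness} in the sense of maximal permissiveness drops out of the same increment formula. If $\mathscr{D}^{\dag}$ also attains $\boldsymbol{\nu}^{\dag}\ee\boldsymbol{\nu}^{\star}$ and contains a transition $q_i\xrightarrow{\sigma}q_j\notin\mathscr{D}^{\star}$, the disabling rule of the algorithm would have refused to disable it because $\nu^{\star}_j\geqq\nu^{\star}_i$; the increment formula then shows that re-enabling it in $G^{\dag}$ leaves the measure unchanged, so $\mathscr{D}^{\dag}\setminus\{(q_i,\sigma)\}$ is also optimal, and iterating this removal yields $\mathscr{D}^{\star}\subseteq\mathscr{D}^{\dag}$, strict whenever the two policies differ.
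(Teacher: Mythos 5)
The paper itself presents Proposition~\ref{proposition-4.2} without proof, deferring to \cite{CR07}, but the Generalized Monotonicity Lemma proved in Appendix~\ref{appmono} (Proposition~\ref{propmonotone}) exposes the intended algebra, and measured against it your Monotonicity step has a genuine gap. You factor the increment as
\begin{gather*}
\boldsymbol{\nu}^{[k+1]}-\boldsymbol{\nu}^{[k]}=\big[\mathbb{I}-(1-\theta)\Pi^{[k]}\big]^{-1}(1-\theta)\,\tilde{\pi}(q_i,\sigma)\,\big(\nu^{[k+1]}_i-\nu^{[k+1]}_j\big)\,v^i,
\end{gather*}
i.e.\ the \emph{old} resolvent acting on a perturbation weighted by the \emph{new} measure differences, so nonnegativity of the increment hinges on the implication $\nu^{[k]}_i>\nu^{[k]}_j\Rightarrow\nu^{[k+1]}_i\geqq\nu^{[k+1]}_j$. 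That implication is false in general: the relative ordering of the coordinates of $\boldsymbol{\nu}_\theta$ can flip from one iteration to the next (disablings elsewhere in the graph can raise $\nu_j$ past $\nu_i$), which is exactly why Algorithm~\ref{Algorithm02} must re-evaluate its decisions and iterate until $\mathscr{D}^{[k]}=\mathscr{D}^{[k-1]}$ instead of stopping after one pass; no contraction argument will rescue a claim whose failure is the reason the algorithm iterates. The paper's identity goes the other way:
\begin{gather*}
\boldsymbol{\nu}^{[k+1]}-\boldsymbol{\nu}^{[k]}=\big[\mathbb{I}-(1-\theta)\Pi^{[k+1]}\big]^{-1}(1-\theta)\big(\Pi^{[k+1]}-\Pi^{[k]}\big)\boldsymbol{\nu}^{[k]},
\end{gather*}
with the \emph{new} resolvent (still entrywise nonnegative by the Neumann series) and the perturbation contracted against the \emph{old} measure; each row of $(\Pi^{[k+1]}-\Pi^{[k]})\boldsymbol{\nu}^{[k]}$ is then nonnegative directly from the disabling rule, because that rule compares precisely the coordinates of $\boldsymbol{\nu}^{[k]}$. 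No persistence of orderings is needed.

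The same repaired identity dissolves the step you flag as the main obstacle in Optimality: taking $\Pi^\star$ as the base and an arbitrary admissible $\Pi^\dag$ as the perturbation gives $\boldsymbol{\nu}^\dag-\boldsymbol{\nu}^\star=[\mathbb{I}-(1-\theta)\Pi^\dag]^{-1}(1-\theta)(\Pi^\dag-\Pi^\star)\boldsymbol{\nu}^\star$, and the termination condition forces $(\Pi^\dag-\Pi^\star)\boldsymbol{\nu}^\star\el\boldsymbol{0}$, so $\boldsymbol{\nu}^\dag\el\boldsymbol{\nu}^\star$ with no exchange over the symmetric difference and no localization at an extremal coordinate. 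Your Effectiveness and Uniqueness sketches are in the right spirit (finite policy space plus non-decrease with equality only at a policy fixed point; removability of disablings between states with equal measure), but both lean on the increment formula and so inherit the defect until Monotonicity is repaired. One further omission worth stating explicitly: $\theta=\theta^{[k]}_\star$ is recomputed at every iteration of Algorithm~\ref{Algorithm02}, so the vectors $\boldsymbol{\nu}^{[k]}$ being compared live at different values of $\theta$; the comparison is legitimate only because each $\theta^{[k]}_\star$ is chosen below the critical bound of Algorithm~\ref{Algorithm01}, which guarantees that the induced coordinate orderings agree with those in the limit $\theta\rightarrow 0^+$.
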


\begin{defn}\label{defthetamin}
Following Remark~\ref{remtheta}, we note that
Algorithm~\ref{Algorithm01} computes a lower bound for the
critical termination probability for each iteration of
Algorithm~\ref{Algorithm02} such that the disabling/enabling
decisions for the terminating plant coincide with the given
non-terminating  model. We define
\begin{gather}
\theta_{min} = \min_{k} \theta^{[k]}_\star
\end{gather}
where $\theta^{[k]}_\star$ is the termination probability
computed by Algorithm~\ref{Algorithm01} in the $k^{th}$
iteration of Algorithm~\ref{Algorithm02}.
\end{defn}
\vspace{3pt}
\begin{defn}\label{defnustarmeas}
If $G$ and $G^\star$ are the unsupervised and optimally supervised PFSA
respectively then we denote the renormalized measure of the
terminating plant $G^\star(\theta_{min})$ as
$\nu_{\star}^i:2^{L(q_i)} \rightarrow [-1,1]$ (See
Definition~\ref{defrenormmeas}). Hence, in vector notation we
have:
\begin{gather}
 \boldsymbol{\nu_{\star}} = \boldsymbol{\nu_{\theta_{min}}}= \theta_{min}  [I-(1-\theta_{min})\Pi^\star]^{-1} \boldsymbol{\chi}
\end{gather}
where $\Pi^\star$ is the transition probability matrix of the supervised plant $G^\star$.
\end{defn}
\begin{rem}
Referring to Algorithm~\ref{Algorithm02}, it is noted that
$\boldsymbol{\nu_{\star}}=\nu^{[K]}$ where $K$ is the total
number of iterations for Algorithm~\ref{Algorithm02}.
\end{rem}
%
\subsection{The Partial Observability Model}\label{secunobs}
%
The observation model used in this paper is defined by the
so-called  unobservability maps
developed in \cite{CRg07} as a generalization of natural projections in discrete event systems.
It is important to mention that while some authors refer to unobservability as the case 
where no transitions are observable in the system; we use the terms ``unobservable'' and ``partially observable''
interchangeably in the sequel.
The relevant concepts developed in \cite{CRg07} are enumerated in this section for the sake of completeness.
\vspace{3pt}
\subsubsection{Assumptions \& Notations}\label{optip-ass} 
 We make two key
assumptions:
\begin{itemize}
\item The unobservability situation
in the model is specified by a bounded memory
unobservability map $\p$ which is available to the
supervisor. 
\item Unobservable transitions are uncontrollable
\end{itemize}
\begin{defn}\label{defunobs}
An unobservability map $\p : Q \times\Sigma^\star
\longrightarrow
\Sigma^\star$ for a given model
$G=(Q,\Sigma,\delta,\widetilde{\Pi},\boldsymbol{\chi},\mathscr{C})$ is defined recursively as follows:
$\forall q_i \in Q, \sigma_j \in \Sigma \ \textrm{and} \ \sigma_j\omega \in L(q_i)$,
\begin{subequations}
\begin{align}
 &\p(q_i,\sigma_j) &&= \left \{
\begin{array}{cl} \epsilon , & \textrm{if} \ \sigma_j \ \textrm{is unobservable from } q_i \\
\sigma_j , & \textrm{otherwise}
\end{array}\right.\\
&\p(q_i,\sigma_j\omega) &&= \p(q_i,\sigma_j)\p(\delta(q_i,\sigma),\omega)
\end{align}
\end{subequations}
\end{defn}
\vspace{3pt}
 We can indicate transitions to be unobservable in the
graph for the automaton \G  as
\textit{unobservable} and this would suffice for a complete
specification of the unobservability map acting on the plant. The assumption of bounded memory
of the unobservability maps implies that although we may need to unfold the automaton graph to unambiguously indicate the unobservable
transitions; there exists a finite unfolding that suffices for our purpose. Such unobservability maps were referred to as \textit{regular} in \cite{CRg07}.
\begin{rem}
 The unobservability maps considered in this paper are state based as opposed to being event 
based 
observability considered in \cite{RW87}.
\end{rem}
\begin{defn}\label{poobs-compunobs}A string $\omega \in
\Sigma^{\star}$ is called unobservable at the supervisory level
 if at least one of the
events in $\omega$ is unobservable $i.e.$
$ \p(q_i,\omega) \neq \omega $
Similarly, a string $\omega \in
\Sigma^{\star}$ is called completely unobservable if each of
the events in $\omega$ is unobservable $i.e.$
$ \p(q_i,\omega) = \epsilon $
Also, if there are no unobservable strings, we denote the unobservability map $\p$ as trivial.
\end{defn}
The subsequent analysis requires the notion of the phantom
automaton introduced in \cite{CR06a}. The following definition is 
included for the sake of completion.
\begin{defn}\label{defPhantom}
 Given a model $G=(Q,\Sigma,\delta,\widetilde{\Pi},\boldsymbol{\chi},\mathscr{C})$ 
and an unobservability map $\p$, the phantom automaton 
$\mathscr{P}(G)=(Q,\Sigma,\mathscr{P}(\delta),\mathscr{P}(\widetilde{\Pi}),\boldsymbol{\chi},\mathscr{P}(\mathscr{C}))$
is defined as follows:
\begin{subequations}
\begin{align}
&\mathscr{P}(\delta)(q_i,\sigma_j) = \left \{ \begin{array}{ll}
                                              \delta(q_i,\sigma_j)  &, \mathrm{if} \ \p(q_i,\sigma_j) = \epsilon \\
\mathrm{Undefined}&,  \mathrm{otherwise}
                                             \end{array}
\right. \\
&\mathscr{P}(\widetilde{\Pi})(q_i,\sigma_j) = \left \{ \begin{array}{ll}
                                              \widetilde{\Pi}(q_i,\sigma_j)  &, \mathrm{if} \ \p(q_i,\sigma_j) = \epsilon \\
0&,  \mathrm{otherwise}
                                             \end{array}
\right.\\
&\mathscr{P}(\mathscr{C}) = \varnothing \label{eqnunobsuncont}
\end{align}
\end{subequations}
\end{defn}

\begin{rem}
The phantom automata in the sense of  Definition~\ref{defPhantom} is a finite state machine description of 
the language of completely unobservable strings resulting from the unobservability map $\p$ acting on the model
$G=(Q,\Sigma,\delta,\widetilde{\Pi},\boldsymbol{\chi},\mathscr{C})$. Note that Eqn.(\ref{eqnunobsuncont}) is a consequence of the
assumption that unobservable transitions are uncontrollable. Thus 
no transition in the phantom automaton is controllable.
\end{rem}
Algorithm~\ref{AlgoPh} (See Appendix~\ref{appen}) computes the transition
probability matrix for the phantom automaton of a given plant
$G$ under a specified  unobservability map $\p$ by
deleting all {observable} transitions from $G$.
\vspace{5pt}
\subsubsection{The Petri Net Observer}\label{subsecPNO}
For a given model $G=(Q,\Sigma,\delta,\widetilde{\Pi},\boldsymbol{\chi},\mathscr{C})$ and a non-trivial
unobservability map $\p$, it is, in general, impossible to pinpoint the current state from 
an observed event sequence at the supervisory level. However, it is possible to estimate
the set of plausible states from a knowledge of the phantom automaton $\mathscr{P}(G)$.
\begin{defn}\label{defnqw}{(Instantaneous State Description :)}
 For a given plant $G_0=(Q,\Sigma,\delta,\widetilde{\Pi},\boldsymbol{\chi},\mathscr{C})$ initialized at state $q_0 \in Q$
and a non-trivial
unobservability map $\p$,  the instantaneous state description is defined to be the image of an observed event sequence $\omega \in \Sigma^\star$
under the map $\overline{Q} : p(L(G_0)) \longrightarrow 2^Q$ 
as follows:
\begin{gather*}
\overline{Q}(\omega) = \{  q_j \in Q : \exists s \in \Sigma^\star \ \mathrm{s.t.} \ \delta(q_0,s) = q_j \bigwedge \p(q_0,s) = \omega \}
\end{gather*}
\end{defn}
\vspace{3pt}
\begin{rem}
 Note that for a trivial unobservability map $\p$ with $\forall \omega \in \Sigma^\star, \p(\omega) = \omega$, we have
$\overline{Q}(\omega) = \delta(q_0,\omega)$ where $q_0$ is the initial state of the plant. 
\end{rem}

The instantaneous state description $\overline{Q}(\omega)$ can be estimated on-line by constructing a Petri Net observer
with flush-out arcs~\cite{MA98}~\cite{G01}. The advantage of using a Petri net
description is the compactness of representation and simplicity of
the on-line execution algorithm that we present next. Our preference of a Petri net description over a subset construction for finite state machines is motivated by the following:
The Petri
net formalism is natural, due to its ability to model
transitions of the type $ q_1 \rightarrow \mspace{-5mu} \vert \mspace{-1.5mu}
^{\nearrow^{ \boldsymbol{ \ q_2}}}_{\searrow_{\boldsymbol{\ q_3}}}$,
which reflects the condition \textit{"the plant can
possibly be in states $q_2$ or $q_3$ after an observed transition
from $q_1$"}.
One can avoid introducing an exponentially large number of "combined states" of the form $[q_2,q_3]$ as involved in the subset construction and more importantly  preserve the  state description of the underlying plant. Flush-out arcs were introduced by Gribaudo $\etal$~\cite{G01} in
the context of fluid stochastic Petri nets. We apply this notion
to ordinary nets with similar meaning: a flush-out arc is
connected to a labeled transition, which, on firing, removes a
token from the input place (if the arc weight is one). Instantaneous descriptions can be computed
on-line efficiently due to the following result:
\vspace{0pt}
\begin{prop}\label{petriobs}
\begin{enumerate}
\item Algorithm~\ref{Alg2} has polynomial complexity. \item Once the Petri net observer has been computed off line, the current possible states for any observed
sequence can be computed by executing Algorithm~\ref{Alg3}
on-line:
\end{enumerate}
\end{prop}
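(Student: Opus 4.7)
The plan is to treat the two claims separately.

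For part (1), I would open Algorithm~\ref{Alg2} and count the Petri net objects it produces as a function of the plant size $|Q|$ and alphabet size $|\Sigma|$. By inspection of the construction, the observer places are in bijection with $Q$, labeled transitions correspond to observable $(q_i,\sigma_j)$ pairs with $\delta(q_i,\sigma_j)$ defined, and flush-out arcs are dictated by the structure of the phantom automaton $\mathscr{P}(G)$ of Definition~\ref{defPhantom}. Each construction step — checking $\p(q_i,\sigma_j)$, reading off $\delta(q_i,\sigma_j)$, and installing the appropriate input, output, and flush-out arcs — is $O(1)$ once $G$ and $\p$ are in memory. Thus the total off-line work is bounded by $O(|Q|^2 |\Sigma|)$, which is polynomial.

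For part (2), the content is correctness, not complexity: the on-line marking produced by Algorithm~\ref{Alg3} after observing $\omega\in p(L(G_0))$ must equal the indicator of $\overline{Q}(\omega)$ in the sense that a token resides on place $q_j$ if and only if $q_j\in\overline{Q}(\omega)$ per Definition~\ref{defnqw}. I would prove this invariant by induction on $|\omega|$. The base case $\omega=\epsilon$ is handled by initializing with a single token on $q_0$ and then saturating under silent phantom-automaton firings; because phantom transitions model precisely the completely unobservable strings (Definition~\ref{poobs-compunobs}), the resulting marking covers exactly those states reachable from $q_0$ via strings $s$ with $\p(q_0,s)=\epsilon$. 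For the inductive step, upon observing an event $\sigma$, the flush-out arcs simultaneously remove tokens from all places whose underlying states cannot emit $\sigma$ as their next observable event and deposit new tokens at the $\delta(\cdot,\sigma)$-successors; a further round of silent phantom firings then re-saturates under unobservable suffixes, which matches the definition of $\overline{Q}(\omega\sigma)$ exactly.

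The main obstacle I expect is showing that the marking after processing each observation is well-defined, since multiple phantom transitions may be concurrently enabled and observable transitions carry flush-out semantics. Specifically, I need a confluence lemma: any maximal firing sequence of silent phantom transitions from a given marking yields the same final marking. This uses the facts that phantom transitions are uncontrollable (per Definition~\ref{defPhantom}, equation~(\ref{eqnunobsuncont})), that their firing is monotone in the tokens they can produce (they never remove tokens except by the standard input arc), and that the reachable markings form a bounded lattice when restricted to binary-token indicators on $Q$. Once confluence is established, the inductive argument closes cleanly and the polynomial per-step cost of Algorithm~\ref{Alg3} — one pass over enabled transitions in the current marking — follows from the same size estimates as in part (1).
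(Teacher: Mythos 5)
The paper does not actually prove this proposition --- its ``proof'' is the single line ``Given in \cite{CRg07}'' --- so there is no in-paper argument to measure yours against; what you have written is the argument the paper omits. Your overall plan is sound: part (1) is a counting argument, and part (2) reduces to the inductive invariant that after processing $\omega$ a place $q_j$ carries a token if and only if $q_j\in\overline{Q}(\omega)$. However, two points diverge from the algorithms as actually stated. First, in Algorithm~\ref{Alg2} the closure under completely unobservable strings is computed \emph{offline}: for each observable transition $q_j\xrightarrow{\sigma}q_k$ the algorithm computes $\overline{Q}(\epsilon)$ from $q_k$ and attaches output arcs to \emph{all} of those places, so the constructed net contains no silent transitions whatsoever and Algorithm~\ref{Alg3} performs no online saturation. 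The confluence lemma you anticipate as the main obstacle is therefore vacuous for this construction; the identity you actually need is that $\overline{Q}(\omega\sigma)$ is the union, over all $q\in\overline{Q}(\omega)$ with $\p(q,\sigma)=\sigma$ and $\delta(q,\sigma)$ defined, of the set of states reachable from $\delta(q,\sigma)$ by completely unobservable strings --- which follows by splitting any $s$ with $\p(q_0,s)=\omega\sigma$ at its last observable event. Second, your claim that each offline construction step is $O(1)$ undercounts: populating the output arcs requires a reachability computation in the phantom automaton for each observable transition, so the offline cost is on the order of $|Q|^2|\Sigma|$ plus a transitive-closure term in $|Q|$; this is still polynomial, so part (1) survives, but the accounting should say so. With these two corrections your induction closes and the proposition follows.
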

\vspace{3pt}\begin{proof} 
Given in \cite{CRg07}.
\end{proof}\vspace{0pt}
%
\section{Online Implementation of Measure-theoretic Optimal Control under Perfect Observation}\label{seconlineS}
This section  devises an online implementation scheme for the language measure-theoretic optimal control 
algorithm 
which will be later extended  to handle plants with non-trivial unobservability maps.
%
Formally, a supervision policy $S$ for a given plant \G
specifies the control in the terms of disabled controllable
transitions at each state $q_i \in Q$ $i.e.$ $S = (G,\phi)$
where
\begin{gather}\label{eqphisup}
\phi : Q \longrightarrow \{0,1\}^{Card(\Sigma)}
\end{gather}
The map $\phi$ is referred to in the literature as the state
feedback map~\cite{RW87} and it specifies the set of disabled
transitions as follows: If at state $q_i \in Q$, events
$\sigma_{i_1},\sigma_{i_r}$ are disabled by the
particular supervision policy, then $\phi(q_i)$ is a binary
sequence on $\{0,1\}$  of length equal to the cardinality of
the event alphabet $\Sigma$ such that
\begin{gather*}
\begin{array}{ccccccc}\mspace{205mu}
\left \downarrow \begin{array}{cc} i_1^{th} \ \textbf{\small
element}
\end{array}
\right. \boldsymbol{\cdots} \mspace{30mu}\left \downarrow \begin{array}{cc}
i_r^{th} \ \textbf{\small element}
\end{array} \right.
\\ \phi(q_i) = \left [
\begin{array}{ccccccccccccc} 0 & \cdots & 1 & \cdots & 0 & \cdots
& 0 &\mspace{0mu} 1 & \cdots
\end{array}\right ]
\end{array}
\end{gather*}
\vspace{-3pt}
\begin{rem}
If it is possible to partition the alphabet $\Sigma$ as
$\Sigma = \Sigma^c \bigsqcup \Sigma^{uc}$, where $\Sigma^c$ is
the set of controllable transitions and $\Sigma^{uc}$ is the
set of uncontrollable transitions, then it suffices to
consider $\phi$ as a map $\phi:Q \longrightarrow
\{0,1\}^{Card(\Sigma^c)}$. However, since we consider
controllability to be state dependent ($i.e.$ the possibility
that an event is controllable if generated at a state $q_i$
and uncontrollable if generated at some other state $q_j$),
such a partitioning scheme is not feasible.
\end{rem}
%
Under perfect observation, a computed supervisor $(G,\phi)$ responds
to the report of a generated event as follows:
\begin{itemize}
\item The current state of the plant model is computed as
$q_{current} = \delta(q_{last},\sigma)$, where $\sigma$ is the
reported event and $q_{last}$ is the state of the plant model
before the event is reported. 
\item All events
specified by $\phi(q_{current})$ is disabled.
\end{itemize}
Note that such an approach requires the supervisor to remember
$\phi(q_i) \forall q_i \in Q$, which is equivalent to keeping
in memory a $n \times m$ matrix, where $n$ is the number of
plant states and $m$ is the cardinality of the event alphabet.
We show that there is a alternative simpler implementation.
\begin{algorithm}[!htb]
 \small \SetLine
  \SetKwData{Left}{left}
  \SetKwData{This}{this}
  \SetKwData{Up}{up}
  \SetKwFunction{Union}{Union}
  \SetKwFunction{FindCompress}{FindCompress}
    \SetKw{Tr}{true}
   \SetKw{Tf}{false}
  \SetKwInOut{Input}{input}
  \SetKwInOut{Output}{output}
  \caption{Online Implementation of Optimal Control}\label{Algoonlineopt}
\Input{\G,$\p$, Initial state $q_0$}
\Output{Optimal Control Actions} \Begin{
Compute $G^{opt}$ by $G\xrightarrow[\mathscr{A}_O]{}G^{opt}$\;
Set $\theta_{\star\star} = \min \theta_\star$\tcc*[r]{Min.
$\theta_\star$ for all iterations}
Set $\boldsymbol{\mu} =
\boldsymbol{\mu}^{G^{opt}_{\theta_{\star\star}}}$\;Set
$q_{current} = q_0$\tcc*[r]{initial state}
\While(\tcc*[f]{Infinite Loop}){\Tr} { Observe event
$\sigma_j$\tcc*[r]{Perfect Observation} Compute $q_{current} =
\delta(q_{current},\sigma_j)$\; \For(\tcc*[f]{$m$ =
Cardinality of $\Sigma$}){$k=1$ \textbf{to} $m$ } {
Compute $q_{next} = \delta(q_{current},\sigma_k)$\;
\eIf(\tcc*[f]{If $q_{Test} == q_j$ then
$\boldsymbol{\mu}(q_{Test}) =\mu_j$
}){$(q_{current},\sigma_k,q_{next})\in \mathscr{C}$}{
\If(\tcc*[f]{If $q_{current} == q_i$ then
$\boldsymbol{\mu}(q_{current}) =\mu_i$ }){ $
\boldsymbol{\mu}(q_{Test}) \geqq \boldsymbol{\mu}(q_{current})
$}{ Disable $\sigma_k$\;}}{ Enable $\sigma_k$\;}
    }
   }
  }
\end{algorithm}
\begin{lem}\label{lemremmeas}
For a given finite state plant \G  and the corresponding 
optimal language measure $\boldsymbol{\nu_\star}$,
 the pair
$(G,\boldsymbol{\nu_\star})$
completely specifies the optimal supervision policy.
\end{lem}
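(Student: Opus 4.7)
By Proposition~\ref{proposition-4.2}(4), the optimal policy $S^\star$ is uniquely encoded by its disabling set $\mathscr{D}^\star \subseteq \mathscr{C}$, so the plan is to show that $(G,\boldsymbol{\nu_{\star}})$ determines $\mathscr{D}^\star$. My strategy is to establish a state-local decision rule: for every controllable triple $(q_i,\sigma_k,q_j)\in\mathscr{C}$, membership in $\mathscr{D}^\star$ will be decided purely by a scalar comparison between $\boldsymbol{\nu_{\star}}\vert_i$ and $\boldsymbol{\nu_{\star}}\vert_j$, both values being read off from $\boldsymbol{\nu_{\star}}$, while $\mathscr{C}$, $\delta$, and the target $q_j=\delta(q_i,\sigma_k)$ are supplied by $G$. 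Once such a rule is in hand, $\mathscr{D}^\star$ is reconstructed from $(G,\boldsymbol{\nu_{\star}})$ by a single pass over $\mathscr{C}$.

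To derive the rule, I appeal to the control philosophy of Definition~\ref{contapp}: disabling $(q_i,\sigma_k,q_j)$ replaces the edge $q_i\xrightarrow{\sigma_k}q_j$ by the self-loop $q_i\xrightarrow{\sigma_k}q_i$ while preserving $\tilde{\pi}(q_i,\sigma_k)$. Hence the contribution of this edge to the Bellman-style recurrence $\boldsymbol{\mu}=\Pi\boldsymbol{\mu}+\boldsymbol{\chi}$, specialized to the renormalized measure of the supervised plant, is $\tilde{\pi}(q_i,\sigma_k)\,\boldsymbol{\nu_{\star}}\vert_j$ when enabled and $\tilde{\pi}(q_i,\sigma_k)\,\boldsymbol{\nu_{\star}}\vert_i$ when disabled. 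I then combine (a) the optimality of $\boldsymbol{\nu_{\star}}$ from Proposition~\ref{proposition-4.2}(3), which forbids disabling a transition whose target carries strictly greater measure than its source, with (b) the maximal-permissiveness uniqueness of Proposition~\ref{proposition-4.2}(4), which forbids disabling in the borderline case of equality. This pins down the local criterion
\begin{equation*}
(q_i,\sigma_k,q_j)\in\mathscr{D}^\star \iff \boldsymbol{\nu_{\star}}\vert_j<\boldsymbol{\nu_{\star}}\vert_i.
\end{equation*}

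Assembling the reconstruction is then bookkeeping: enumerate every $(q_i,\sigma_k)$ with $(q_i,\sigma_k,\delta(q_i,\sigma_k))\in\mathscr{C}$ using the data in $G$, look up the two relevant entries of $\boldsymbol{\nu_{\star}}$, and apply the inequality; this is precisely the inner loop of Algorithm~\ref{Algoonlineopt} and consumes no data outside $(G,\boldsymbol{\nu_{\star}})$. The hard part will be the perturbation step embedded in (a) above: flipping a single controllable edge from disabled to enabled perturbs one off-diagonal/self-loop pair in a single row of $\Pi^\star$, and I need to show that the sign of the induced change in the $i$-th component of $\boldsymbol{\nu_{\star}}=\theta_{min}[I-(1-\theta_{min})\Pi^\star]^{-1}\boldsymbol{\chi}$ is governed exactly by the sign of $\boldsymbol{\nu_{\star}}\vert_j-\boldsymbol{\nu_{\star}}\vert_i$. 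This should follow from a resolvent/Sherman--Morrison identity applied to the rank-one perturbation of $\Pi^\star$, together with the entrywise non-negativity of $[I-(1-\theta_{min})\Pi^\star]^{-1}$ that already underlies Proposition~\ref{Proposition2.1}; once this monotonicity is in place, Proposition~\ref{proposition-4.2}(3)--(4) closes the loop and the lemma follows.
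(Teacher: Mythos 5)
Your proposal is correct and follows essentially the same route as the paper: the paper's proof simply quotes the local characterization of $G^\star$ from \cite{CR07,C-PhD} — disable a controllable $q_i\xrightarrow{\sigma}q_j$ iff $\boldsymbol{\nu_\star}\vert_i>\boldsymbol{\nu_\star}\vert_j$, enable at equality by maximal permissiveness — and observes that this rule is computable from $(G,\boldsymbol{\nu_\star})$ alone, exactly your single pass over $\mathscr{C}$. The ``hard part'' you flag (sign of the measure change under a rank-one edge flip, via the resolvent identity and non-negativity of $[I-(1-\theta)\Pi]^{-1}$) is precisely the Monotonicity Lemma the paper cites and reproduces in generalized form as Proposition~\ref{propmonotone}, so nothing new needs to be proved there.
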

\vspace{0pt}
\begin{proof}
The optimal configuration
$G^\star$ is characterized as follows~\cite{CR07,C-PhD}:
\begin{itemize}
\item if for states $q_i,q_j \in Q$,
$\boldsymbol{\nu_\star}\big \vert_i >
\boldsymbol{\nu_\star}\big \vert_j$, then all controllable
transitions $q_i \xrightarrow[q_i]{} q_j$ are disabled. \item
if for states $q_i,q_j \in Q$,
$\boldsymbol{\nu_\star}\big \vert_i \leqq
\boldsymbol{\nu_\star}\big \vert_j$, then all controllable
transitions $q_i \xrightarrow[q_i]{} q_j$ are enabled.
\end{itemize}
It follows that if the supervisor has access to the
unsupervised plant model $G$ and the language measure vector
$\boldsymbol{\nu_\star}$, then the
optimal policy can be implemented by the following procedure:
\begin{enumerate}
\item Compute the current state of the plant model as
$q_{current} = \delta(q_{last},\sigma)$, where $\sigma$ is the
reported event and $q_{old}$ is the state of the plant model
before the event is reported. Let $q_{current} = q_i$. \item
Disable all controllable transitions $q_i
\xrightarrow[\sigma_j]{} q_k$ if
$\boldsymbol{\nu_\star}\big \vert_i >
\boldsymbol{\nu_\star}\big \vert_k$ for all $q_k \in Q$.
\end{enumerate}
This completes the proof. The procedure is summarized in
Algorithm~\ref{Algoonlineopt}.
\end{proof}
\vspace{0pt}

The approach
given in Lemma~\ref{lemremmeas} is important from the
perspective that it forms the intuitive basis for extending the
optimal control algorithm derived under the assumption of
perfect observation 
to situations where one or more transitions are unobservable
at the supervisory level.
%
\section{Optimal Control under Non-trivial Unobservability}\label{optip-intro}
This section makes
use of the  unobservability analysis presented in
Section~\ref{secunobs} to derive a modified online-implementable
control algorithm for partially observable probabilistic finite state plant
models. 
%
%
%
\subsection{The Fraction Net Observer}\label{secinstmeas}
In Section~\ref{secunobs} the notion of instantaneous description
of  was introduced
as a map $\overline{Q} : p(L(G_i)) \longrightarrow 2^Q$  from
the set of observed event traces to the power set of the state set $Q$, such that given an observed event
trace $\omega$, $\overline{Q}(\omega) \subseteq Q$ is the set
of states that the underlying deterministic finite state plant
can possibly occupy at the given instant. 
We constructed a Petri Net observer (Algorithm~\ref{Alg2}) and showed that the instantaneous
description can be computed online with polynomial complexity.
However, for a plant modeled by a probabilistic regular
language, the knowledge of the event occurrence probabilities
allows us not only to compute the set of possible current
states ($i.e.$ the instantaneous description) but also the
probabilistic cost of ending up in each state in the instantaneous
description. To achieve this objective, we modify the Petri
Net Observer introduced in Section~\ref{subsecPNO} by assigning
(possibly) fractional weights computed as functions of the
event occurrence probabilities to the input arcs. The output
arcs are still given unity weights. In the sequel, the Petri
Net observer with possibly fractional arc weights is referred
to as the \textbf{Fraction Net Observer} (FNO). 
\vspace{0pt}
First we need to
formalize the notation for the Fraction Net observer.
\vspace{3pt}
\begin{defn}\label{notfno}
Given a finite state terminating plant model \Gt, and an
unobservability map $\p$, the Fraction Net observer (FNO), denoted as $\mathscr{F}_{(G_\theta,\p)}$, is
a labelled Petri Net
$(Q,\Sigma,A^{\mathcal{I}},A^{\mathcal{O}},w^{\mathcal{I}},
x^0)$
 with fractional arc weights and possibly fractional markings,
 where $Q$ is the set of places, $\Sigma$ is the event label
 alphabet, $A^{\mathcal{I}}\subseteqq Q \times \Sigma \times Q$ and
 $A^{\mathcal{O}}\subseteqq Q\times \Sigma$ are the sets of input and output arcs,
 $w^{\mathcal{I}}$ is the 
 input  weight assignment function
and $x^0 \in \mathcal{B}$ (See Notation \ref{notpure}) is the
 initial marking. The output arcs are defined to have unity weights.
\end{defn}
\vspace{0pt}
The algorithmic construction of a FNO is derived next. We assume that the Petri Net observer has already
been computed (by Algorithm~\ref{Alg2}) with $Q$ the set of
places, $\Sigma$ the set of transition labels,
$A^{\mathcal{I}} \subseteqq Q \times \Sigma \times Q$ the set
of input arcs and $A^{\mathcal{O}} \subseteqq Q \times \Sigma$
the set of output arcs. \vspace{0pt}

\begin{defn}\label{defnwtinput}
The input weight assigning function
$w^{\mathcal{I}}:A^{\mathcal{I}} \longrightarrow (0,\infty)$
for the Fraction Net observer  is defined as :
\begin{align*}
&\forall q_i \in Q, \forall \sigma_j \in \Sigma, \forall q_k
\in Q, \nonumber \\ & \delta(q_i,\sigma_j) = q_\ell
\Longrightarrow w^{\mathcal{I}}(q_i,\sigma_j,q_k) =
\mspace{-50mu}
\sum_{\begin{subarray}{c}\omega \in \Sigma^\star \ \textrm{s.t.} \\
\delta^\star(q_\ell,\omega)=q_k \bigwedge p (q_\ell,\omega) =
\epsilon \end{subarray}} \mspace{-50mu}
(1-\theta)^{\vert \omega \vert}\tilde{\pi}(q_\ell,\omega)
\end{align*}
where  $\delta:Q \times \Sigma \rightarrow Q$ is the
transition map of the underlying DFSA and $\p$ is the given
unobservability map and $\tilde{\pi}$ is the event
cost ($i.e.$ the occurrence probability) function \cite{R05}.
It follows that the weight on an input arc from transition
$\sigma_j$ (having an output arc from place $q_i$) to place
$q_k$ is the sum total of the conditional probabilities of all completely unobservable paths by which the
underlying plant can reach the state $q_k$ from state $q_\ell$
where $q_\ell=\delta(q_i,\sigma_j)$.
\end{defn}
\vspace{3pt}
Computation of the input arc weights for the
Fraction Net observer requires the notion of the phantom
automaton (See Definition~\ref{defPhantom}).  The
computation of the arc weights for the FNO is  summarized
in Algorithm~\ref{AlgoFNOarcwt}.
\begin{prop}\label{propinputarcw}
Given a Petri Net observer
$(Q,\Sigma,A^{\mathcal{I}},A^{\mathcal{O}})$, the event
occurrence probability matrix $\widetilde{\pi}$ and the
transition probability matrix for the phantom automaton
$\mathscr{P}(\Pi)$, Algorithm~\ref{AlgoFNOarcwt}
computes the arc weights for the fraction net observer as
stated in Definition~\ref{defnwtinput}.
\end{prop}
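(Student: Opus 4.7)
The plan is to prove that the algorithm's computed weights coincide with the infinite sum in Definition~\ref{defnwtinput} by re-expressing that sum as a single entry of the matrix $\bigl[\mathbb{I}-(1-\theta)\mathscr{P}(\Pi)\bigr]^{-1}$. Concretely, I would argue that Algorithm~\ref{AlgoFNOarcwt} must, for each input arc $(q_i,\sigma_j,q_k)$, first identify $q_\ell=\delta(q_i,\sigma_j)$ and then read off the $(\ell,k)$-entry of the precomputed matrix $M\triangleq\bigl[\mathbb{I}-(1-\theta)\mathscr{P}(\Pi)\bigr]^{-1}$ as the input arc weight. So the heart of the proof is the identity
\begin{equation*}
\sum_{\substack{\omega\in\Sigma^\star :\ \delta^\star(q_\ell,\omega)=q_k\\ \p(q_\ell,\omega)=\epsilon}} (1-\theta)^{|\omega|}\,\tilde{\pi}(q_\ell,\omega) \;=\; M_{\ell,k}.
\end{equation*}

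The first key step is to stratify the sum by path length. For each fixed $n\geqq 0$, I would show that
\begin{equation*}
\sum_{\substack{|\omega|=n,\ \delta^\star(q_\ell,\omega)=q_k\\ \p(q_\ell,\omega)=\epsilon}}\tilde{\pi}(q_\ell,\omega) \;=\; \bigl[\mathscr{P}(\Pi)^{n}\bigr]_{\ell,k}.
\end{equation*}
This reduces to a straightforward induction on $n$: the base case $n=0$ gives $\tilde{\pi}(q_\ell,\epsilon)=1$ precisely when $q_\ell=q_k$, matching $[\mathscr{P}(\Pi)^0]_{\ell,k}=\delta_{\ell k}$. For the inductive step, I would use the multiplicative recursion $\tilde{\pi}(q_\ell,\sigma s)=\tilde{\pi}(q_\ell,\sigma)\tilde{\pi}(\delta(q_\ell,\sigma),s)$ from Definition~\ref{pitildefn}(3), together with the observation (from Definition~\ref{defPhantom}) that $\mathscr{P}(\Pi)$ is constructed precisely so that its $(\ell,r)$-entry aggregates the occurrence probabilities of single unobservable symbols carrying $q_\ell$ to $q_r$. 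The condition $\p(q_\ell,\sigma s)=\epsilon$ factors as $\p(q_\ell,\sigma)=\epsilon$ and $\p(\delta(q_\ell,\sigma),s)=\epsilon$, which is what allows the recursion to match the matrix product.

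Summing the identity over $n$ with weights $(1-\theta)^n$ then yields the Neumann series $\sum_{n=0}^\infty[(1-\theta)\mathscr{P}(\Pi)]^n$, whose convergence to $M$ follows because $\mathscr{P}(\Pi)$ is sub-stochastic (its rows omit the mass attached to observable transitions) and because the factor $(1-\theta)$ with $\theta\in(0,1)$ makes $(1-\theta)\mathscr{P}(\Pi)$ a strict contraction in the $\infty$-norm; this is entirely parallel to the contraction argument invoked after Eq.~(\ref{fullObservation}). With the identity established, the proposition reduces to verifying that Algorithm~\ref{AlgoFNOarcwt} (i) iterates over exactly the arc set $A^{\mathcal{I}}$ produced by Algorithm~\ref{Alg2}, (ii) correctly resolves $q_\ell=\delta(q_i,\sigma_j)$ via the given DFSA structure, and (iii) reads the appropriate entry of $M$; these are direct structural checks against the algorithm's pseudocode.

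The main obstacle I anticipate is the careful bookkeeping in the inductive step: one must ensure that the restriction to completely unobservable strings $\p(q_\ell,\omega)=\epsilon$ composes correctly with the deterministic transition structure so that paths are counted exactly once, and that no contribution from partially observable extensions leaks into the sum. In particular, since a symbol's observability is state-based (Definition~\ref{defunobs}), the phantom automaton's transitions at $\delta(q_\ell,\sigma)$ must be consulted, not those at $q_\ell$, when extending a path—this is precisely what the composition $[\mathscr{P}(\Pi)^n]_{\ell,k}=\sum_r[\mathscr{P}(\Pi)]_{\ell,r}[\mathscr{P}(\Pi)^{n-1}]_{r,k}$ encodes. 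Once this correspondence between the path sum and the Neumann series is clean, the rest is routine.
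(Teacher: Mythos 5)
Your proposal is correct and follows essentially the same route as the paper: both reduce the proposition to the identity between the $(1-\theta)^{|\omega|}$-weighted sum over completely unobservable paths from $q_\ell$ to $q_k$ and the $(\ell,k)$-entry of $\bigl[\mathbb{I}-(1-\theta)\mathscr{P}(\Pi)\bigr]^{-1}$, the paper obtaining this by citing the language-measure formula for the phantom automaton while you re-derive it explicitly via length stratification and the Neumann series. The extra induction you supply is a more detailed justification of a step the paper takes as known, not a different argument.
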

\vspace{0pt}
\begin{proof}
Algorithm~\ref{AlgoFNOarcwt} employs the following identity to
compute input arc weights:
\begin{align*}
&\forall q_i \in Q, \forall \sigma_j \in \Sigma, \forall q_k
\in Q,\nonumber\\ &w^{\mathcal{I}}(q_i,\sigma_j,q_k) = \left
\{\begin{array}{ll}\Big [ \mathbb{I} - (1-\theta)\mathscr{P}
(\Pi) \Big ]^{-1} \bigg \vert_{\ell k}, & \\ \mspace{60mu}
\textrm{if}
 \ (q_i,\sigma_j,q_k) \in A^{\mathcal{I} } \wedge \delta(q_i,\sigma_j) = q_{\ell} & \\
0, & \\ \mspace{60mu}\textrm{otherwise} & \end{array}\right.
\end{align*}
which follows from the following argument. Assume that for the
given unobservability map $\p$, $G^{\mathscr{P}}$ is the
phantom automaton for the underlying plant $G$. We observe that the measure of the
language of all strings initiating from state $q_\ell$ and
terminating at state $q_k$ in the phantom automaton
$G^{\mathscr{P}}$ is given by
$\Big [ \mathbb{I} -
\mathscr{P} (\Pi) \Big ]^{-1} \bigg \vert_{\ell
k}$.
Since every string generated by the phantom automaton is
completely unobservable (in the sense of
Definition~\ref{defPhantom}), we
conclude
\begin{gather}
\Big [ \mathbb{I} - (1-\theta)\mathscr{P} (\Pi) \Big ]^{-1}
\bigg \vert_{\ell k} = \mspace{-20mu}
\sum_{\begin{subarray}{c}\omega \in \Sigma^\star \ \textrm{s.t.} \\
\delta^\star(q_\ell,\omega)=q_k \bigwedge p (q_\ell,\omega) =
\epsilon \end{subarray}} \mspace{-50mu}
(1-\theta)^{\vert \omega \vert}\tilde{\pi}(q_\ell,\omega)
\end{gather}
This completes the proof.
\end{proof}
\vspace{0pt}
\begin{algorithm}[t]
\footnotesize
  \SetLine
  \SetKwData{Left}{left}
  \SetKwData{This}{this}
  \SetKwData{Up}{up}
  \SetKwFunction{Union}{Union}
  \SetKwFunction{FindCompress}{FindCompress}
  \SetKwInOut{Input}{input}
  \SetKwInOut{Output}{output}
  \SetKw{Tr}{true}
   \SetKw{Tf}{false}
  \caption{Computation of Arc Weights for FNO}\label{AlgoFNOarcwt}
\Input{Petri Net Observer $(Q,\Sigma,A^{\mathcal{I}},A^{\mathcal{O}})$,
Event Occurrence probability Matrix $\widetilde{\pi}$,
$\mathscr{P}(\Pi)$} \Output{$w^{\mathcal{I}}$,
$w^{\mathcal{O}}$} \Begin{
\tcc*[h]{Computing Weights for Input Arcs}\\
\For{$i=1$ \textbf{to} $n$}{
\For{$j=1$ \textbf{to} $m$}{
\For{$k=1$ \textbf{to} $n$}{
\If{$(q_i,\sigma_j,q_k) \in A^{\mathcal{I} }$}{
Compute $q_\ell = \delta(q_i,\sigma_j)$\;
$w^{\mathcal{I}}(q_i,\sigma_j,q_k) = \Big [ \mathbb{I} -
\mathscr{P} (\Pi) \Big ]^{-1} \bigg \vert_{\ell
k}$\;
}}} }
}
\end{algorithm}
In the Section~\ref{subsecPNO}, we presented Algorithm~\ref{Alg3}
to compute the Instantaneous State Description $\overline{Q}(\omega)$ online
without referring to the transition probabilities. The approach consisted of
firing all enabled transitions (in the Petri Net observer)
labelled by $\sigma_j$ on observing the event $\sigma_j$ in
the underlying plant. The set of possible current states then
consisted of all states which corresponded to places with one
or more tokens. For the Fraction Net observer we use a
slightly different approach which involves computation of a set
of  event-indexed state transition matrices.
\vspace{0pt}
\begin{defn}\label{defnevindM}
For a Fraction Net observer
$(Q,\Sigma,A^{\mathcal{I}},A^{\mathcal{O}},w^{\mathcal{I}},x^0)$
the set of event-indexed state transition matrices
$\boldsymbol{\Gamma} = \{\Gamma^{\sigma_j} : \sigma_j \in
\Sigma \}$ is a set of $m$  matrices each of dimension $n
\times n$ (where $m$ is the cardinality of the event alphabet
$\Sigma$ and $n$ is the number of places), such that on
observing event $\sigma_j$ in the underlying plant, the
updated marking $x^{[k+1]}$ for the FNO (due to firing of all
enabled $\sigma_j$-labelled transitions in the net) can be
obtained from the existing marking $x^{[k]}$ as follows:
\begin{gather}
x^{[k+1]} = x^{[k]}\Gamma^{\sigma_j}
\end{gather}
\end{defn}
\vspace{0pt} The procedure for computing $\boldsymbol{\Gamma}$
is presented in Algorithm~\ref{AlgGammaFNO}. Note that the
only inputs to the algorithm are the transition matrix for the
phantom automaton, the unobservability map $\p$ and the
transition map for the underlying plant model. The next
proposition shows that the algorithm is correct.
\begin{algorithm}[t]
 \footnotesize \SetLine
  \SetKwData{Left}{left}
  \SetKwData{This}{this}
  \SetKwData{Up}{up}
  \SetKwFunction{Union}{Union}
  \SetKwFunction{FindCompress}{FindCompress}
  \SetKwInOut{Input}{input}
  \SetKwInOut{Output}{output}
  \caption{Derivation of Transition Matrices $\Gamma^{\sigma_j}$}\label{AlgGammaFNO}
\Input{$\mathscr{P}(\Pi)$, $\delta$, $\p$}
\Output{$\Gamma^{\sigma_j} \ \forall  \sigma_j  \in \Sigma$}
\Begin{
      \For(\tcc*[f]{$m$ = No. of events}){$j  \ \in \ \{1,\cdots,m\} \ $} {
      \For(\tcc*[f]{$n$ = No. of places}){$i  \ \in \ \{1,\cdots,n\} \ $} {
\eIf{ $ \delta(q_i,\sigma_j) \ \textrm{is undefined} \
\textbf{OR} \ p(q_i,\sigma_j)=\epsilon $}{$\textrm{Set} \
i^{th} \ \textrm{row of} \ \Gamma^j =
[0,\cdots,0]^T$\;}{$\textrm{Compute} \ \boldsymbol{r} =
\delta(q_i,\sigma_j) \ $\; $\textrm{Set} \ i^{th} \
\textrm{row of} \ \ \Gamma^j =  \boldsymbol{r}^{th}
\ \textrm{row of} \ [\mathbb{I} -\mathscr{P}(\Pi)
]^{-1}  $\; }
    }
   }
  }
\end{algorithm}
\vspace{0pt}
\begin{prop}\label{proptransM}
Algorithm~\ref{AlgGammaFNO}  correctly computes the set of event-indexed
transition matrices $\boldsymbol{\Gamma} = \{\Gamma^{\sigma_j}
: \sigma_j \in \Sigma \}$ for a given fraction net observer
$(Q,\Sigma,A^{\mathcal{I}},w^{\mathcal{I}},x^0)$ in the sense
stated in Definition~\ref{defnevindM}.
\end{prop}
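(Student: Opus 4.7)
The plan is to verify two things: first, that the Petri net firing rule for the FNO on observation of $\sigma_j$ is equivalent to a right-multiplication of the current marking (as a row vector) by some matrix; second, that the matrix Algorithm~\ref{AlgGammaFNO} writes down coincides with that matrix entry by entry. Once both are in place, the identity $x^{[k+1]} = x^{[k]}\Gamma^{\sigma_j}$ of Definition~\ref{defnevindM} follows immediately.

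First I would unpack the FNO semantics. A $\sigma_j$-labelled transition can fire from place $q_i$ only when (i) $\delta(q_i,\sigma_j)$ is defined in the underlying DFSA, and (ii) $\sigma_j$ is actually observable from $q_i$, that is $\p(q_i,\sigma_j)\neq\epsilon$; otherwise no output arc $(q_i,\sigma_j)$ exists in $A^{\mathcal{O}}$ (by construction of the Petri net observer) and the token at $q_i$ is not consumed. When a transition does fire, a token mass $x^{[k]}_i$ is removed from $q_i$ (the output arc has weight one) and redistributed to every place $q_k$ along an input arc from the $\sigma_j$-transition, each receiving the fractional mass $x^{[k]}_i\,w^{\mathcal{I}}(q_i,\sigma_j,q_k)$. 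Summing over all places that fire gives
\begin{gather*}
x^{[k+1]}_k = \sum_{i} x^{[k]}_i\, M_{ik},
\end{gather*}
where $M_{ik} = w^{\mathcal{I}}(q_i,\sigma_j,q_k)$ if both (i) and (ii) hold at $q_i$, and $M_{ik}=0$ otherwise. This is exactly matrix multiplication on the right by $M$, so it remains only to show $M=\Gamma^{\sigma_j}$.

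The verification is now a direct line-by-line comparison. In the branch of Algorithm~\ref{AlgGammaFNO} where $\delta(q_i,\sigma_j)$ is undefined or $\p(q_i,\sigma_j)=\epsilon$, the entire $i$-th row of $\Gamma^{\sigma_j}$ is zeroed out, matching the vanishing of $M_{ik}$ for all $k$. In the complementary branch, setting $r=\delta(q_i,\sigma_j)$, the algorithm copies the $r$-th row of $[\mathbb{I}-\mathscr{P}(\Pi)]^{-1}$ (equivalently, with the renormalization convention, of $[\mathbb{I}-(1-\theta)\mathscr{P}(\Pi)]^{-1}$) into the $i$-th row of $\Gamma^{\sigma_j}$. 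By Proposition~\ref{propinputarcw} this $r$-th row, evaluated at column $k$, is precisely $w^{\mathcal{I}}(q_i,\sigma_j,q_k)$, so $\Gamma^{\sigma_j}_{ik}=M_{ik}$ in this branch as well.

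The only point that needs some care, and which I expect to be the main conceptual obstacle, is arguing that a single matrix multiplication genuinely captures both the observable $\sigma_j$-step and all subsequent completely unobservable evolution that can ensue before the next observation. The subtlety is that, after firing, the deposited tokens should not be further ``closed'' under the phantom transitions, because the unobservable closure has already been absorbed into the arc weights through the resolvent $[\mathbb{I}-(1-\theta)\mathscr{P}(\Pi)]^{-1}$ of Proposition~\ref{propinputarcw}. Once this point is articulated, no additional algebra is required: the proposition reduces to an already-justified identification of each nonzero entry of $\Gamma^{\sigma_j}$ with the corresponding FNO arc weight, and of each zero entry with a disabled transition.
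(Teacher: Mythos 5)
Your proposal is correct and follows essentially the same route as the paper's own proof: flush-out arcs (undefined or unobservable $\sigma_j$) give zero rows, the weighted redistribution of token mass gives $x^{[k+1]}_\ell=\sum_i w^{\mathcal{I}}(q_i,\sigma_j,q_\ell)x^{[k]}_i$, and Proposition~\ref{propinputarcw} identifies these weights with the rows of the resolvent $\bigl[\mathbb{I}-(1-\theta)\mathscr{P}(\Pi)\bigr]^{-1}$ indexed by $r=\delta(q_i,\sigma_j)$. Your closing remark that the unobservable closure is already absorbed into the arc weights is left implicit in the paper but is a correct and worthwhile clarification.
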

\vspace{0pt}
\begin{proof}
Let the current marking of the Fraction Net observer specified
as $(Q,\Sigma,A^{\mathcal{I}},$
$A^{\mathcal{O}},w^{\mathcal{O}},w^{\mathcal{I}})$ be denoted
by $x^{[k]}$ where $x^{[k]} \in [0,\infty)^n$ with $n =
Card(Q)$. Assume event $\sigma_j \in \Sigma$ is observed in
the underlying plant model. To obtain the updated marking of
the Fraction Net observer, we need to fire all transitions
labelled by $\sigma_j$ in the FNO. Since the graph of the FNO
is identical with the graph of the Petri Net observer
constructed by Algorithm~\ref{Alg2}, it
follows that if $\delta(q_i,\sigma_j)$ is undefined or  the
event $\sigma_j$ is unobservable from the state $q_i$ in the
underlying plant, then there is a flush-out arc to a
transition labelled $\sigma_j$ from the place $q_i$ in the
graph of the Fraction Net observer. This implies that the
content of place $q_i$ will be flushed out and hence will not
contribute to any place in the updated marking $x^{[k+1]}$
i.e.
\begin{gather}
x^{[k]}_i\Gamma^{\sigma_j}_{i\ell} = 0 \forall \ i \in
\{1,\cdots,n\}
\end{gather}
implying that the $i^{th}$ column of the matrix
$\Gamma^{\sigma_j}$ is $[0,\cdots,0]^T$. This justifies Line 5
of Algorithm~\ref{AlgGammaFNO}.If $\sigma_j$ is defined and
observable from the state $q_i$ in the underlying plant, then
we note that the contents of the place $q_i$ end up in all
places $q_\ell \in Q$ such that there exists an input arc
$(q_i,\sigma_j,q_\ell)$ in the FNO. Moreover, the contribution
to the place $q_\ell$ coming from place $q_i$ is weighted by
$w^{\mathcal{I}}(q_i,\sigma_j,q_\ell)$. Denote this
contribution by $c_{i\ell}$. Then we have
%
%
%
%
%
\begin{align}
&c_{i\ell} = w^{\mathcal{I}}(q_i,\sigma_j,q_\ell)x^{[k]}_i\nonumber\\
\Longrightarrow & \sum_ic_{i\ell} =
\sum_iw^{\mathcal{I}}(q_i,\sigma_j,q_\ell)x^{[k]}_i \nonumber\\
\Longrightarrow & x^{[k+1]}_\ell =
\sum_iw^{\mathcal{I}}(q_i,\sigma_j,q_\ell)x^{[k]}_i
\end{align}
Note that $ \sum_ic_{i\ell}=x^{[k+1]}_\ell$ since
contributions from all places to $q_\ell$ sum to the value of
the updated marking in the place $q_\ell$. Recalling from
Proposition~\ref{propinputarcw}, that
\begin{gather}
w^{\mathcal{I}}(q_i,\sigma_j,q_\ell) = \Big [ \mathbb{I} - (1-\theta)
\mathscr{P} (\Pi) \Big ]^{-1} \bigg \vert_{r\ell}
\end{gather}
where $q_r = \delta(q_i,\sigma_j)$ in the underlying plant,
the result follows.
\end{proof}
\vspace{0pt}
Proposition~\ref{proptransM} allows an alternate computation of the
Instantaneous State Description. We assume that the initial state
of the underlying plant is known and hence the initial marking
for the FNO is assigned as follows:
\begin{gather}
x^{[0]}_i = \left \{ \begin{array}{ll} 1 & \textrm{if} \ q_i \
\textrm{is  the initial state} \\
0 & \textrm{otherwise}
\end{array}\right.
\end{gather}
It is important to note that since the underlying plant is a
deterministic finite state automata (DFSA) having only one
initial state, the initial marking of the Fraction Net
observer has only one place with value 1 and all remaining
places are empty. It follows from
Proposition~\ref{proptransM}, that for a given initial marking
$x^{[0]}$ of the FNO,  the marking after observing a string
$\omega = \sigma_{r_1}\cdots\sigma_{r_k}$ where $\sigma_{j}\in
\Sigma$, is obtained as:
\begin{gather}
x^{[k]} = 
x^{[0]}\prod_{j=r_1}^{j=r_k}\Gamma^{\sigma_j}
\end{gather}
Referring to the notation for instantaneous description
introduced in Definition~\ref{defnqw}, we have
\begin{gather}
\overline{Q}(\omega) = \big \{q_i \in Q : x^{[ \vert \omega \vert ] }_i > 0 \big
\}
\end{gather}\vspace{-15pt}
\begin{rem}
We observe that to solve the State Determinacy problem, we
only need to know if  the individual marking values are
non-zero. The specific values of the entries in the marking
$x^{[k]}$  however allow us to estimate the cost of occupying
individual states in the instantaneous description
$\overline{Q}(\omega)$.
\end{rem}
\vspace{-5pt}
\subsection{State Entanglement Due to Partial Observability}\label{subsecentanglement}
The  markings of the FNO $\mathscr{F}_{(G_\theta,\p)}$ for the plant \Gt in case of perfect observation is of the
following form:
\begin{gather*}
\forall k \in \mathbb{N}, \ x^{[k]} = [ 0 \cdots 0 \ 1 \ 0 \cdots 0]^T \ i.e. \ x^{[k]} \in \mathcal{B} \ (\mathrm{See \ Notation~\ref{notpure}})
\end{gather*}
It follows that for a perfectly observable system,  $\mathcal{B}$ is an enumeration of the
state set $Q$ in the sense
$x^{[k]}_i = 1$ implies that the current
state is $q_i\in Q$. Under a non-trivial unobservability
map $\p$, the set of all possible FNO markings proliferates and 
 we can interpret $x^{[k]}$ after the $k^{th}$ observation instance as the current states of the
observed dynamics. This follows from the fact that no previous knowledge beyond that
of the current FNO marking $x^{[k]}$ is required to define the future evolution
of $x^{[k]}$. The effect
of partial observation can then be interpreted as adding new
states to the model with each new state a linear combination
of the underlying states enumerated in $\mathcal{B}$. 

Drawing an analogy with 
the phenomenon of state entanglement in quantum mechanics, we refer to $\mathcal{B}$
as the set of \textit{pure} states; while all other occupancy estimates that may appear are
referred to as \textit{mixed} or \textit{entangled} states. Even for a finite state plant model, the
cardinality of the set of all possible entangled states is not guaranteed to be finite. 
\begin{lem}\label{lemobsstate}
Let $\mathscr{F}_{(G_\theta,\p)}$ with initial marking $x^{[0]} \in \mathcal{B}$ be the FNO for 
the underlying terminating plant \Gt with uniform termination probability $\theta$. Then
for any observed string $\omega = \sigma_{r_1}\cdots\sigma_{r_s} $ of length $s \in \mathbb{N}$
with $\sigma_{r_j}\in
\Sigma \ \forall r_j \in \{1,\cdots,k\}$, the occupancy estimate $x^{[k]}$, after occurrence of the
$k^{th}$ observable transition,  satisfies:
\begin{subequations}
\begin{gather}
 x^{[k]} \in \left [0,\frac{1}{\theta} \right ]^{\Crd(\Sigma)}\setminus \boldsymbol{0} \label{eq35a}
\end{gather}
 \end{subequations}
\end{lem}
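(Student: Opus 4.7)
The plan is to prove the claim by induction on the number $k$ of observed events, with $k=0$ as the base case. For $k=0$, the marking $x^{[0]}$ lies in $\mathcal{B}$ by assumption, so it is a unit vector with coordinates in $\{0,1\} \subseteq [0, 1/\theta]$ (since $\theta \in (0,1)$), and it is non-zero.

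For the inductive step, I would use the evolution rule $x^{[k+1]} = x^{[k]} \Gamma^{\sigma_{r_{k+1}}}$ of Definition~\ref{defnevindM} and analyze $\Gamma^{\sigma_j}$ as produced by Algorithm~\ref{AlgGammaFNO}. The key tool is the Neumann series expansion
\begin{align*}
\bigl[\mathbb{I} - (1-\theta)\mathscr{P}(\Pi)\bigr]^{-1} = \sum_{j=0}^{\infty} (1-\theta)^j \mathscr{P}(\Pi)^j.
\end{align*}
Because $\mathscr{P}(\Pi)$ is non-negative and substochastic (row sums at most $1$), each power $\mathscr{P}(\Pi)^j$ is non-negative with entries in $[0,1]$ and row sums bounded by $1$. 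Summing the resulting geometric series yields that every entry of the inverse is non-negative and lies in $[0, 1/\theta]$, and that every row sums to at most $1/\theta$. Consequently, each row of $\Gamma^{\sigma_j}$ is either identically zero (when $\sigma_j$ is undefined or unobservable from $q_i$) or equals a row of the Neumann inverse, so $\Gamma^{\sigma_j}$ has non-negative entries all bounded by $1/\theta$.

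Entrywise non-negativity of $x^{[k+1]}$ then follows immediately since it is the product of non-negative quantities. The hard part is propagating the entrywise upper bound $1/\theta$ across the inductive step, because a naive matrix--vector bound can inflate coordinates by an additional factor at each observation, so the uniform bound $1/\theta$ does not fall out of term-by-term estimates. I expect the resolution to come from a probabilistic reinterpretation: by construction of $w^{\mathcal{I}}$ (Definition~\ref{defnwtinput}), each coordinate $x^{[k]}_i$ equals the sum, over all plant event strings projecting under $\p$ to the observed prefix $\omega$ and terminating at $q_i$, of their $(1-\theta)$-weighted conditional probabilities along the unobservable segments, and such a sum is majorized by the total reachability weight $[\mathbb{I} - (1-\theta)\mathscr{P}(\Pi)]^{-1}\big\vert_{r i} \leqq 1/\theta$ for some source state $q_r$. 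This is the step that will carry the technical weight of the argument.

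Finally, to establish $x^{[k+1]} \neq \boldsymbol{0}$, I would argue that since $\sigma_{r_{k+1}}$ was observed, it must be enabled and observable from at least one state $q_i$ in the support of $x^{[k]}$, which is non-empty by the inductive hypothesis. The corresponding row of $\Gamma^{\sigma_{r_{k+1}}}$ is a row of the Neumann inverse, and its diagonal entry satisfies $[\mathbb{I} - (1-\theta)\mathscr{P}(\Pi)]^{-1}\big\vert_{rr} \geqq 1$ because the empty unobservable path contributes $1$ to the series. Hence at least one coordinate of $x^{[k+1]}$ is strictly positive, completing the induction.
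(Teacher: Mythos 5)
Your base case, the entrywise non-negativity argument, and the non-vanishing argument are fine and essentially coincide with the paper's proof: the paper likewise gets non-negativity from $x^{[0]} \in \mathcal{B}$ together with non-negativity of the matrices $\Gamma^{\sigma}$, and rules out $x^{[k]}=\boldsymbol{0}$ by the same observation you make (if every state in the support of the current estimate had $\sigma$ undefined or unobservable, the event $\sigma$ could not have been observed); your extra remark that the diagonal entries of the Neumann inverse are at least $1$ is a harmless refinement.

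The genuine gap is the elementwise bound by $1/\theta$, which is the substantive content of Eq.~\eqref{eq35a} and which you explicitly leave open (``I expect the resolution to come from a probabilistic reinterpretation \dots this is the step that will carry the technical weight''). Announcing a strategy is not a proof, and the majorant you name does not obviously work as stated: after $k$ observations, $x^{[k]}_i$ is an aggregate of the form $\sum_r c_r \big[\mathbb{I}-(1-\theta)\mathscr{P}(\Pi)\big]^{-1}\big\vert_{ri}$ in which the coefficients $c_r$ carry the entire mass of the previous marking, so dominating it by a single phantom-resolvent entry ``for some source state $q_r$'' additionally requires showing that the total weight $\sum_r c_r$ feeding into the last unobservable stretch is at most one --- and you give no argument for that. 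This is not innocuous, because by Definition~\ref{defnwtinput} the FNO arc weights do \emph{not} include the generation probabilities of the observed events themselves, so the marking is not a sub-probability vector and naive compounding fails (as you yourself note). The paper closes this step by a different comparison: it interprets $x^{[k]}_j$ as the sum of the conditional probabilities of all strings $u$ from the \emph{initial} state with $\p(q_i,u)=\omega$ that terminate at $q_j$, majorizes this by $x^{[0]}\big[\mathbb{I}-(1-\theta)\Pi\big]^{-1}\big\vert_j$ --- the corresponding sum over \emph{all} strings, irrespective of observability --- and then uses stochasticity of $\Pi$ to get $\infnrm{\big[\mathbb{I}-(1-\theta)\Pi\big]^{-1}} \leqq 1/\theta$. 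Note that the dominating object is the resolvent of the full plant anchored at the initial state, not the phantom resolvent anchored at an intermediate state; to complete your route you would have to either reproduce that comparison with the set of all strings or supply the missing normalization bound on the incoming weights, and in either case you must explain carefully how the domination survives the fact that the FNO weights omit the observed events' probability factors.
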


\vspace{3pt}
\begin{proof}
Let the initial marking $x^{[0]} \in \mathcal{B}$ be given by 
\begin{gather}
 \begin{array}{cccc}[0 \cdots &1 &\cdots 0]\\
  \mathrm{(i^{th} \ element)} & \uparrow &
 \end{array}
\end{gather}
 Elementwise non-negativity of $x^{[k]}$ for all $k \in \mathbb{N}$ follows from the fact that $x^{[0]} \in \mathcal{B}$ is elementwise non-negative and  each 
$\Gamma^{\sigma}$ is a non-negative matrix for all $\sigma \in \Sigma$. We also need to show that $x^{[k]}$ cannot be the zero vector. The argument is as follows:
Assume if possible $x^{[\ell]}\Gamma^{\sigma} = \boldsymbol{0}$ where $x^{[\ell]} \neq \boldsymbol{0}$ and $\sigma \in \Sigma$ is the current observed event. It follows from the construction of the transition matrices that $ \forall q_i \in Q, x^{[\ell]}_i \neq 0 $ implies that either $ \delta(q_i,\sigma) $ is undefined or $\p(q_i,\sigma) = \epsilon$. In either case, it is impossible
to observe the event $\sigma$ with the current occupancy estimate $x^{[\ell]}$ which is a contradiction.  Finally, we need to prove the
elementwise upper bound of $\frac{1}{\theta}$ on $x^{[k]}$. We note that that $x^{[k]}_j$ is the sum total of the conditional probabilities of all strings $u \in \Sigma^\star$
initiating from state $q_i \in Q$ (since $\forall j, x^{[0]}_j = \delta_{ij} $) that terminate on the state $q_j \in Q$ and satisfy
\begin{gather}
 \p(u) = \omega
\end{gather}
It follows that $x^{[k]}_j \leqq x^{[0]}[ \mathbb{I} - (1-\theta)\Pi  ]^{-1}\big \vert_j$ since the righthand-side is the sum of the conditional probabilities of all strings
that go to $q_j$ from $q_i$ irrespective of observability. Hence we conclude:
\begin{gather*}
 \vert \vert x^{[k]} \vert \vert_\infty \leqq \vert \vert x^{[0]} [ \mathbb{I} - (1-\theta)\Pi  ]^{-1}\vert \vert_\infty \leqq 1 \times \frac{1}{\theta}
\end{gather*}
which completes the proof.

\end{proof}
\begin{rem}\label{remcompact}
It follows from Lemma~\ref{lemobsstate} that the entangled states belong to a compact subset of $\mathbb{R}^{\Crd(Q)}$.
\end{rem}
\vspace{3pt}
\begin{defn}\label{defentstate}
 (Entangled State Set:) For a given \G and $\p$, the entangled state set $Q_\mathscr{F} \subset  \mathbb{R}^{\Crd(Q)} \setminus \boldsymbol{0}$ is 
 the set of all possible markings of the FNO initiated at any of the
pure states $x^{[0]} \in \mathcal{B}$. 
\end{defn}
\subsection{An Illustrative Example of State Entanglement}
We consider the plant model as presented in the lefthand plate of 
Figure~\ref{optip-figex1}. The finite state plant model with
the unobservable transition (marked in red dashed) along with
the constructed Petri net observer is shown in
Figure~\ref{optip-figex1}. The event occurrence probabilities
assumed are shown in Table~\ref{optip-tabpit} and the transition probability matrix \textbf{P} is shown in
Table~\ref{optip-tabpi}. Given $\theta=0.01$, we apply Algorithm~\ref{AlgoPh} to obtain:
\begin{gather}
\big [ \mathbb{I} - (1-\theta)\mathscr{P}(\Pi) \big ]^{-1} = \left [
\begin{array}{cccc} 1 & 0.2 & 0 & 0 \\0 & 1 &
0 & 0 \\0 & 0 & 1 & 0 \\0 & 0 & 0 & 1
\end{array} \right ]
\end{gather}
\begin{table}[!h]
\begin{minipage}{1.75in}
\caption{Event Occurrence Probabilities}\label{optip-tabpit}
\hspace{15pt}\begin{tabular}{||c|ccc|}
\hline & e &r & a \\ \hline \hline $00$ & $0.2$ & $0.8$ & $0$
\\\hline $01$ & $0.2$
& $0.5$ & $0.3$ \\ \hline $11$ & $0.6$ & $0.4$ & $0$
\\ \hline $10$ & $0.3$ & $0.5$
& $0.2$ \\ \hline
\end{tabular}
\end{minipage}
\begin{minipage}{1.6in}
 \caption{Transition Probability Matrix $\Pi$}\label{optip-tabpi}
\hspace{-5pt}\begin{tabular}{||c|cccc|}
\hline & $00$ & $01$ & $11$ & $10$ \\ \hline \hline $00$ &
$0.8$ & $0.2$ & $0$ & 0
\\\hline $01$ & $0.5$
& $0.2$ & $0.3$ & 0 \\ \hline $11$ & 0 & 0 &$0.6$ & $0.4$
\\ \hline $10$ & $0.2$ & $0$
& $0.3$ & 0.5 \\ \hline
\end{tabular}
\end{minipage}
\end{table}\vspace{0pt}
\begin{figure}[t]
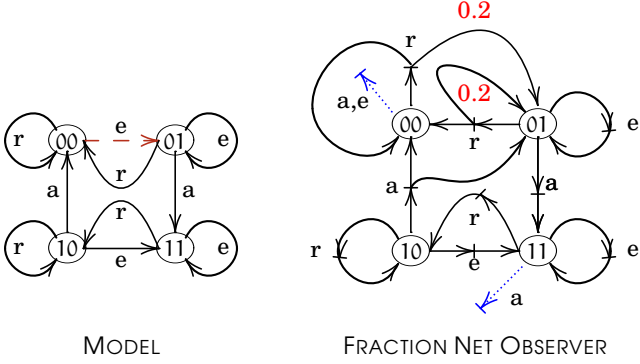

\begin{center}
\begin{minipage}{3.25in}\vspace{-10pt}
\xy (-2,0)*{ \xy 0;/r.17pc/:
(-40,0)*+[o][F]{00};(-20,0)*+[o][F-]{01};
(-20,-20)*+[o][F-]{11};(-40,-20)*+[o][F-]{10};
{\myar@{-->}^{\txt{e}}@[BrickRed](-37,0)*{};(-23,0)*{}};
{\myar@{->}^{\txt{a}}(-20,-2)*{};(-20,-17)*{}};
{\myar@{->}_{\txt{e}}(-37,-20)*{};(-23,-20)*{}};
{\myar@{->}^{\txt{a}}(-40,-17)*{};(-40,-2)*{}};
{\myar@{<-}^{\txt{r}}@/_1.5pc/(-37,0)*{};(-23,0)*{}};
{\myar@{<-}_{\txt{r}}@/^1.5pc/(-37,-20)*{};(-23,-20)*{}};
(-42,2)*{}="X1";(-42,-2)*{}="X2"; "X1";"X2"
**[thicker]\crv{(-45,8) & (-58,0) & (-45,-8) };
{\myar@{{}{}{>}}(-43.75,-4)*{};(-42,-2)*{}};
(-18,2)*{}="X1";(-18,-2)*{}="X2"; "X1";"X2"
**[thicker]\crv{(-15,8) & (-2,0) & (-15,-8) };
{\myar@{{}{}{>}}(-16.25,-4)*{};(-18,-2)*{}};
(-42,-18)*{}="X1";(-42,-22)*{}="X2"; "X1";"X2"
**[thicker]\crv{(-45,-12) & (-58,-20) & (-45,-28) };
{\myar@{{}{}{>}}(-43.75,-24)*{};(-42,-22)*{}};
(-18,-18)*{}="X1";(-18,-22)*{}="X2"; "X1";"X2"
**[thicker]\crv{(-15,-12) & (-2,-20) & (-15,-28) };
{\myar@{{}{}{>}}(-16.25,-24)*{};(-18,-22)*{}};
(-49,0)*{\txt{r}};(-11,0)*{\txt{e}};
(-49,-20)*{\txt{r}};(-11,-20)*{\txt{e}};
\endxy};
(45,5)*{\xy 0;/r.2pc/:
(-40,0)*+[o][F]{00};(-20,0)*+[o][F-]{01};
(-20,-20)*+[o][F-]{11};(-40,-20)*+[o][F-]{10};
{\myar@[blue]@{{}{.}{>|}}^{\txt{a,e}}(-43,2)*{};(-48,8)*{}};
{\myar@[blue]@{{}{.}{>|}}^{\txt{a}}(-22,-22)*{};(-29,-29)*{}};
{\myar@{->}^{\txt{a}}(-20,-2)*{};(-20,-17)*{}};
{\myar@{->}^{\txt{a}}(-20,-2)*{};(-20,-17)*{}};
{\myar@{->}_{\txt{e}}(-37,-20)*{};(-23,-20)*{}};
{\myar@{{}{-}{>|}}(-40,-17)*{};(-40,-10)*{}};
{\myar@{{}{-}{>}}(-40,-10)*{};(-40,-2)*{}};
{\myar@{{}{-}{>}}(-22.75,-3)*{};(-22,-2)*{}};
(-40,-10)*{}="X1";(-22,-2)*{}="X2"; "X1";"X2"
**[thicker]\crv{(-37,-5) & (-30,-15) };
{\myar@{{}{-}{>}}(-30,0)*{};(-37,0)*{}};
{\myar@{{}{-}{>|}}(-23,0)*{};(-30,0)*{}};
(-30,0)*{}="X1";(-22,2)*{}="X2"; "X1";"X2"
**[thicker]\crv{(-40,10) & (-30,10) & (-25,5) };
{\myar@{{}{}{>}}(-23,3)*{};(-22,2)*{}};
{\myar@{<-}@/^1.8pc/(-37,-20)*{};(-23,-20)*{}};
(-40,9.5)*{}="X1";(-42,-2)*{}="X2"; "X1";"X2"
**[thicker]\crv{(-47,18) & (-65,0) & (-45,-8) };
{\myar@{{}{}{>}}(-43.75,-4)*{};(-42,-2)*{}};
{\myar@{{}{-}{>|}}(-40,3)*{};(-40,9)*{}};
{\myar@/^1.8pc/@{{}{-}{>}}(-40,9)*{};(-20,3)*{}};
{\myar@{{}{}{>|}}(-9,-19)*{};(-8.8,-21)*{}};
{\myar@{{}{}{>|}}(-51,-19)*{};(-51.2,-21)*{}};
{\myar@{{}{}{>|}}(-9,1)*{};(-8.8,-1)*{}};
{\myar@{{}{}{>|}}(-20,-10)*{};(-20,-11)*{}};
{\myar@{{}{}{>|}}(-31,-20)*{};(-30,-20)*{}};
{\myar@{{}{}{>|}}(-28,-12)*{};(-29,-11)*{}};
(-18,2)*{}="X1";(-18,-2)*{}="X2"; "X1";"X2"
**[thicker]\crv{(-15,8) & (-2,0) & (-15,-8) };
{\myar@{{}{}{>}}(-16.25,-4)*{};(-18,-2)*{}};
(-42,-18)*{}="X1";(-42,-22)*{}="X2"; "X1";"X2"
**[thicker]\crv{(-45,-12) & (-58,-20) & (-45,-28) };
{\myar@{{}{}{>}}(-43.75,-24)*{};(-42,-22)*{}};
(-18,-18)*{}="X1";(-18,-22)*{}="X2"; "X1";"X2"
**[thicker]\crv{(-15,-12) & (-2,-20) & (-15,-28) };
{\myar@{{}{}{>}}(-16.25,-24)*{};(-18,-22)*{}};
(-40,13)*{\txt{r}};(-5,0)*{\txt{e}};
(-55,-20)*{\txt{r}};(-5,-20)*{\txt{e}};
(-43,-10)*{\txt{a}};(-30,-15)*{\txt{r}}; (-30,-3)*{\txt{r}};(-30,18)*{\txt{\red
0.2}};(-30,5)*{\txt{\red 0.2}};
\endxy};
(-2,-20)*{\txt{\sffamily \textsc{Model}}};
(45,-20)*{\txt{\sffamily \textsc{Fraction Net Observer}}};
\endxy\vspace{3pt}
\caption{Underlying plant and  Petri Net
Observer}\label{optip-figex1}
\end{minipage}
\end{center}
\end{figure}\vspace{0pt}
The arc weights are then computed for the Fraction Net
Observer and the result is shown in the righthand plate of Figure~\ref{optip-figex1}.
Note that the arcs in red are the ones with fractional weights
in this case; all other arc weights are unity. The set of
transitions matrices $\boldsymbol{\Gamma}$ are now computed
from Algorithm~\ref{AlgGammaFNO} as:
\begin{gather*}
\Gamma^e = \left [ \begin{array}{cccc} \mspace{0mu} 0 & \mspace{-15mu} 0 & \mspace{-15mu} 0 & \mspace{-15mu} 0  \\
\mspace{0mu} 0 & \mspace{-15mu} \textbf{\blue 1} & \mspace{-15mu} 0 & \mspace{-15mu} 0  \\
\mspace{0mu} 0 & \mspace{-15mu} 0 & \mspace{-15mu} \textbf{\blue 1} & \mspace{-15mu} \textbf{\blue 1}  \\
\mspace{0mu} 0 & \mspace{-15mu} 0 & \mspace{-15mu} 0 & \mspace{-15mu} 0
\end{array} \right ], \
\Gamma^r = \left [ \begin{array}{cccc}
\mspace{0mu} \textbf{\blue 1} & \mspace{-15mu} \textbf{\blue 1} & \mspace{-15mu} 0 & \mspace{-15mu} 0  \\
\mspace{0mu} \textbf{\red 0.2} & \mspace{-15mu} \textbf{\red 0.2} & \mspace{-15mu} 0 & \mspace{-15mu} 0  \\
\mspace{0mu} 0 & \mspace{-15mu} 0 & \mspace{-15mu} 0 & \mspace{-15mu} 0  \\
\mspace{0mu} 0 & \mspace{-15mu} 0 & \mspace{-15mu} \textbf{\blue 1} & \mspace{-15mu} \textbf{\blue 1}
\end{array} \right ] 
%
%
\Gamma^a = \left [ \begin{array}{cccc}
 \mspace{0mu} 0 & \mspace{-15mu} 0 & \mspace{-15mu} 0 & \mspace{-15mu} \textbf{\blue 1}  \\
\mspace{0mu} 0 & \mspace{-15mu} 0 & \mspace{-15mu} 0 & \mspace{-15mu} \textbf{\red 0.2}  \\
\mspace{0mu} 0 & \mspace{-15mu} \textbf{\blue 1} & \mspace{-15mu} 0 & \mspace{-15mu} 0  \\
\mspace{0mu} 0 & \mspace{-15mu} 0 & \mspace{-15mu} 0 & \mspace{-15mu} 0
 \end{array} \right ]
\end{gather*}
We consider three different observation sequences $rr,re,ra$
assuming that the initial state in the underlying plant is
$00$ in each case ($i.e.$ the initial marking of the FNO is
given by $\alpha^0 = [ 1 \ 0 \ 0 \ 0]^T$. The final markings ($i.e.$ the entangled states) are
given by:
\begin{gather}
\alpha\Gamma^r \Gamma^r  = \left [
\begin{array}{c} 1.20\\ 0.24\\ 0\\
0\end{array} \right],  \alpha\Gamma^r \Gamma^e  = \left [
\begin{array}{c} 0\\ 0.2\\ 0 \\ 0 \end{array} \right ],
\alpha \Gamma^r \Gamma^a
= \left [
\begin{array}{c} 0\\ 0\\ 0.2\\
0\end{array} \right]
\end{gather}
Note that while in the
case of the Petri Net observer, we could only say that
$\overline{Q}(rr) = \{q_1,q_2\}$, for the fraction net
observer, we have an estimate of the cost of occupying each
state ($1.2$ and $0.24$ respectively for the first case).
\begin{table}[t]
\begin{minipage}{3.5in}
\centering
\caption{Event Occurrence Probabilities For Model 2}\label{table3}
\hspace{0pt}\begin{tabular}{||c|ccc|}
\hline & e &r & a \\ \hline \hline $00$ & $0.2$ & $0.79$ & \textbf{\BRed 0.01} $\boldsymbol{\leftarrow}$
\\\hline $01$ & $0.2$
& $0.5$ & $0.3$ $\phantom{\boldsymbol{\leftarrow}}$\\ \hline $11$ & $0.6$ & $0.39$ & \textbf{\BRed 0.01} $\boldsymbol{\leftarrow}$
\\ \hline $10$ & $0.3$ & $0.5$
& $0.2$ $\phantom{\boldsymbol{\leftarrow}}$\\ \hline
\end{tabular}
\end{minipage}
\end{table}\vspace{0pt}

Next we consider a slightly modified underlying plant 
with the event occurrence probabilities as tabulated in Table~\ref{table3}.
The modified plant (denoted as Model 2) is shown in
the righthand plate of Figure~\ref{fig2}.
\begin{figure}[!h]
\begin{center}
\begin{minipage}{3.25in}\hspace{0pt}
\xy (0,0)*{\xy 0;/r.18pc/:
(-40,0)*+[o][F]{00};(-20,0)*+[o][F-]{01};
(-20,-20)*+[o][F-]{11};(-40,-20)*+[o][F-]{10};
{\myar@{-->}^{\txt{e}}@[BrickRed](-37,0)*{};(-23,0)*{}};
{\myar@{->}^{\txt{a}}(-20,-2)*{};(-20,-17)*{}};
{\myar@{->}_{\txt{e}}(-37,-20)*{};(-23,-20)*{}};
{\myar@{->}^{\txt{a}}(-40,-17)*{};(-40,-2)*{}};
{\myar@{<-}^{\txt{r}}@/_1.5pc/(-37,0)*{};(-23,0)*{}};
{\myar@{<-}_{\txt{r}}@/^1.5pc/(-37,-20)*{};(-23,-20)*{}};
(-42,2)*{}="X1";(-42,-2)*{}="X2"; "X1";"X2"
**[thicker]\crv{(-45,8) & (-58,0) & (-45,-8) };
{\myar@{{}{}{>}}(-43.75,-4)*{};(-42,-2)*{}};
(-18,2)*{}="X1";(-18,-2)*{}="X2"; "X1";"X2"
**[thicker]\crv{(-15,8) & (-2,0) & (-15,-8) };
{\myar@{{}{}{>}}(-16.25,-4)*{};(-18,-2)*{}};
(-42,-18)*{}="X1";(-42,-22)*{}="X2"; "X1";"X2"
**[thicker]\crv{(-45,-12) & (-58,-20) & (-45,-28) };
{\myar@{{}{}{>}}(-43.75,-24)*{};(-42,-22)*{}};
(-18,-18)*{}="X1";(-18,-22)*{}="X2"; "X1";"X2"
**[thicker]\crv{(-15,-12) & (-2,-20) & (-15,-28) };
{\myar@{{}{}{>}}(-16.25,-24)*{};(-18,-22)*{}};
(-49,0)*{\txt{r}};(-11,0)*{\txt{e}};
(-49,-20)*{\txt{r}};(-11,-20)*{\txt{e}};
(-30,-30)*{\txt{\sffamily \textsc{Model 1}}};
\endxy };
(45,0)*{
\xy 0;/r.18pc/:
(-40,0)*+[o][F]{00};(-20,0)*+[o][F-]{01};
(-20,-20)*+[o][F-]{11};(-40,-20)*+[o][F-]{10};
{\myar@{-->}^{\txt{e}}@[BrickRed](-37,0)*{};(-23,0)*{}};
{\myar@{->}^{\txt{a}}(-20,-2)*{};(-20,-17)*{}};
{\myar@{->}_{\txt{e}}(-37,-20)*{};(-23,-20)*{}};
{\myar@{->}^{\txt{a}}(-40,-17)*{};(-40,-2)*{}};
{\myar@{<-}^{\txt{r}}@/_1.5pc/(-37,0)*{};(-23,0)*{}};
{\myar@{<-}_{\txt{r}}@/^1.5pc/(-37,-20)*{};(-23,-20)*{}};
(-42,2)*{}="X1";(-42,-2)*{}="X2"; "X1";"X2"
**[thicker]\crv{(-45,8) & (-58,0) & (-45,-8) };
{\myar@{{}{}{>}}(-43.75,-4)*{};(-42,-2)*{}};
(-18,2)*{}="X1";(-18,-2)*{}="X2"; "X1";"X2"
**[thicker]\crv{(-15,8) & (-2,0) & (-15,-8) };
{\myar@{{}{}{>}}(-16.25,-4)*{};(-18,-2)*{}};
(-42,-18)*{}="X1";(-42,-22)*{}="X2"; "X1";"X2"
**[thicker]\crv{(-45,-12) & (-58,-20) & (-45,-28) };
{\myar@{{}{}{>}}(-43.75,-24)*{};(-42,-22)*{}};
(-18,-18)*{}="X1";(-18,-22)*{}="X2"; "X1";"X2"
**[thicker]\crv{(-15,-12) & (-2,-20) & (-15,-28) };
{\myar@{{}{}{>}}(-16.25,-24)*{};(-18,-22)*{}};
(-48,0)*{\txt{r,a}};(-11,0)*{\txt{e}};
(-49,-20)*{\txt{r}};(-12,-20)*{\txt{a,e}};
(-30,-30)*{\txt{\sffamily \textsc{Model 2}}};
\endxy};
\endxy
\vspace{5pt}
\caption{Underlying models to illustrate effect of unobservability on the 
cardinality of the entangled state set}\label{fig2}
\end{minipage}
\end{center}\vspace{0pt}
\end{figure}
\begin{figure}[!h]
\vspace{0pt}
\centering
 \includegraphics[width=3in]{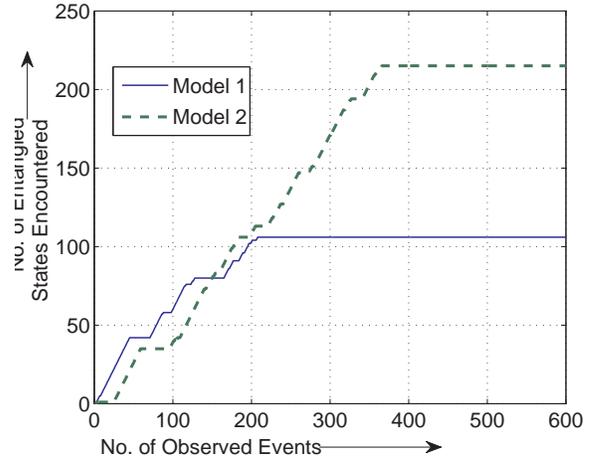}
\vspace{-6pt}
\caption{Total number of distinct entangled states encountered as a function of the number of observation ticks $i.e.$ the number of observed events}\label{figentangled}
\end{figure}
The two models are simulated with the initial pure state set to $[0 \ 0 \ 1\ 0]$
in each case. We note that the number of entangled states in the 
course of simulated operation more than doubles from $106$ for Model 1 to 
$215$ for Model 2 (See Figure~\ref{figentangled}). In the simulation, entangled state vectors were distinguished
with a tolerance of $10^{-10}$ on the max norm.
\subsection{Maximization of Integrated
Instantaneous Measure}
\begin{defn}\label{definstchar}{Instantaneous Characteristic: }
Given a  plant  \Gt,
the instantaneous characteristic $\hat{\chiup}(t)$ is defined as a
function of plant operation time $t\in[0,\infty)$ as follows:
 \begin{gather}
\hat{\chiup}(t) = \boldsymbol{\chiup}\big \vert_i
\end{gather}
where $q_i \in Q$ is the state occupied at time $t$ 
\end{defn}
\begin{defn}\label{defninstm}
{Instantaneous Measure For Perfectly Observable Plants: }
Given a  plant  \Gt,
the instantaneous measure ($\displaystyle \hat{\nu}_\theta(t)$) is defined as a
function of plant operation time $t\in[0,\infty)$ as follows:
\begin{gather}
\hat{\nu}_\theta(t) = \langle \alpha(t), \boldsymbol{\nu}_\theta \rangle
\end{gather}
where $\alpha \in \mathcal{B}$ corresponds to the state that G is observed to occupy at time $t$ (Refer to Eq.~\eqref{eqpure}) and 
$\boldsymbol{\nu}_\theta$ is the renormalized language measure vector for the
underlying plant $G$ with uniform termination probability $\theta$.
\end{defn}
\vspace{0pt}
 Next we show that the
optimal control algorithms presented in
Section~\ref{seconlineS} for perfectly observable
situations can be interpreted as maximizing the expectation of the
time-integrated instantaneous measure for the finite state
plant model under consideration.
\begin{prop}\label{propmuint}
For the unsupervised plant \G with all transitions observable at the supervisory
level, let $G^\star$ be the optimally supervised plant and
$G^{\#}$ be obtained by arbitrarily disabling
controllable transitions. Denoting the instantaneous measures 
for $G^\star$ and $G^{\#}$
by $\hat{\nu}^{\star}_\theta(t)$ and $\hat{\nu}^{{\#}}_\theta(t)$ for some uniform termination probability $\theta \in (0,1)$
respectively, we have
\begin{gather}
\mathbf{E} \left
(\int_0^t\hat{\nu}^{\star}_\theta(\tauup)\mathrm{d}\tauup \right
)\geqq \mathbf{E} \left
(\int_0^t\hat{\nu}^{\#}_\theta(\tauup)\mathrm{d}\tauup \right )
\forall t \in[0,\infty), \forall \theta \in (0,1)
\end{gather}
where $t$ is the plant operation time and
$\mathbf{E}(\cdot)$ denotes the expected value of the
expression within braces.
\end{prop}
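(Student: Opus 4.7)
The plan is to reduce the integrated expected instantaneous measure to a weighted expected characteristic sum whose maximization over supervision policies is controlled by the elementwise optimality of $\boldsymbol{\nu}^\star_\theta$ asserted in Proposition~\ref{proposition-4.2}.

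First, I would write $\hat{\nu}^P_\theta(\tau) = \langle \alpha^P(\tau), \boldsymbol{\nu}^P_\theta \rangle$ where $\alpha^P(\tau) \in \mathcal{B}$ is the pure state vector associated with the random state $q^P(\tau)$ occupied at time $\tau$ under policy $P \in \{\star, \#\}$. By linearity of expectation, $\mathbf{E}(\hat{\nu}^P_\theta(\tau)) = \mathbf{p}^P(\tau)^T \boldsymbol{\nu}^P_\theta$, where $\mathbf{p}^P(\tau)$ denotes the probability distribution over the state set at time $\tau$, with the common initial distribution $\mathbf{p}^P(0) = \alpha(0)$ independent of $P$.

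Second, I would invoke the Bellman-type identity $\boldsymbol{\nu}^P_\theta = \theta \boldsymbol{\chi} + (1-\theta) \Pi^P \boldsymbol{\nu}^P_\theta$, which follows algebraically from Definition~\ref{defrenormmeas}. Iterating this relation and applying the Markov property at time $\tau$ produces the representation $\mathbf{E}(\hat{\nu}^P_\theta(\tau)) = \theta \sum_{k=0}^\infty (1-\theta)^k \mathbf{E}(\chi(q^P(\tau+k)))$, identifying $\boldsymbol{\nu}^P_\theta\vert_i / \theta$ as the expected total discounted characteristic accrued under $P$ with discount factor $(1-\theta)$ starting from state $q_i$.

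Third, substituting into the integral and exchanging the order of summation and integration, the target quantity admits the form $\mathbf{E}\left(\int_0^t \hat{\nu}^P_\theta(\tau) d\tau\right) = \sum_{u \geq 0} w_u(t,\theta) \mathbf{E}(\chi(q^P(u)))$, where the weights $w_u(t,\theta) \geq 0$ depend only on $t$ and $\theta$ but not on $P$. The desired inequality thus reduces to showing that the optimal supervisor $\star$ maximizes every such non-negatively weighted expected characteristic sum.

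The main obstacle lies in this last reduction: Proposition~\ref{proposition-4.2} guarantees optimality with respect to the specific geometric weighting implicit in $\boldsymbol{\nu}^P_\theta$ itself, whereas the weights $w_u(t,\theta)$ arising from truncated integration are not purely geometric. Bridging this gap will require a policy-concatenation argument in the spirit of Bellman's principle of optimality — comparing the pure $\star$ supervisor against one that follows $\#$ for a finite prefix and reverts to $\star$ thereafter, and leveraging the fact that the optimum in Definition~\ref{pdef} attains the supremum over all admissible (not merely stationary) supervision policies — in order to pin down the sign of the integrated difference for every finite horizon $t$.
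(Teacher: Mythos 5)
There is a genuine gap, and it sits exactly where you flag it. Your first three steps are fine (the identity $\boldsymbol{\nu}^P_\theta = \theta\boldsymbol{\chi} + (1-\theta)\Pi^P\boldsymbol{\nu}^P_\theta$ and the resulting expansion of $\mathbf{E}(\hat{\nu}^P_\theta(\tau))$ as a discounted sum of expected characteristics are correct), but the reduction you end with — that the supervisor $\star$ maximizes $\sum_u w_u(t,\theta)\,\mathbf{E}(\chi(q^P(u)))$ for the non-geometric, horizon-truncated weights $w_u(t,\theta)$ — is precisely the claim that needs proof, and you do not prove it. Proposition~\ref{proposition-4.2} and Definition~\ref{pdef} only assert elementwise dominance of the infinite-horizon measure vector over static disabling sets $\mathscr{D}\subseteq\mathscr{C}$; they say nothing about finite-horizon or arbitrarily weighted expected-characteristic functionals, and in general a discount-optimal stationary policy need not dominate such truncated objectives. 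The ``policy-concatenation / Bellman principle'' bridge you invoke is only named, not constructed, and nothing in the paper's machinery (which compares whole disabling sets, not history-dependent switchings between $\#$ and $\star$) supplies it, so the argument as written does not close.

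The paper avoids this issue entirely by a different reduction. Using the stationary (Cesaro-limit) distribution $p^i$, whose transpose is the $i$-th row of $\Q$, it writes $\mathbf{E}(\hat{\chiup}(\tau)) = \langle p^i,\boldsymbol{\chi}\rangle = \boldsymbol{\nu}_0\vert_i$, so the time integral of the expected characteristic is simply $t\,\boldsymbol{\nu}_0\vert_i$, which the optimal supervisor maximizes because it elementwise maximizes $\boldsymbol{\nu}_0$. It then invokes Corollary~\ref{Corollary2.1}, $\Q\boldsymbol{\nu}_\theta = \Q\boldsymbol{\chi}$, to conclude $\mathbf{E}(\hat{\nu}_\theta(\tau)) = \mathbf{E}(\hat{\chiup}(\tau))$ for every $\tau$ and every $\theta\in(0,1)$ (Eq.~\eqref{eqexpect}), so the inequality for the integrated characteristic transfers verbatim to the integrated instantaneous measure. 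In other words, the $\theta$-dependence is eliminated pointwise in time by the Cesaro identity rather than by unrolling the discounted recursion; this is the step your proposal is missing, and without it (or a genuinely worked-out substitute for the weighted-sum dominance) the proof is incomplete.
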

\vspace{0pt}
\begin{proof}
Assume that the stochastic transition probability matrix for
an arbitrary finite state plant model  be denoted by
$\Pi$ and denote the Cesaro limit as:
$\displaystyle
\Q=\lim_{k\rightarrow\infty}\frac{1}{k}\sum_{j=0}^{k-1}
\Pi^{j}
$.
Denoting the final stable state probability vector as
$p^i$, where the plant is assumed to initiate operation in
state $q_i$, we claim that
$
 p^i_j = \Q_{ij}
$
which follows immediately from noting that if the initiating
state is $q_i$ then
\begin{gather*}
  \begin{array}{cccccccccc} (p^i)^T = \bigg [ 0\cdots &0& \mspace{-70mu}1 \cdots
&0  \bigg  ]
\lim_{k\rightarrow\infty}\frac{1}{k}\sum_{j=0}^{k-1}
\Pi^{j} \\
&&\uparrow i^{th} \ \textrm{element}&\end{array}
\end{gather*}
$i.e.$ $(p^i)^T$ is the $i^{th}$ row of $\Q$. Hence, we have
\begin{align*}
\mathbf{E} \left (\int_0^t\hat{\chiup}(\tauup)\mathrm{d}\tauup
\right ) = \int_0^t\mathbf{E} \left
(\hat{\chiup}(\tauup)\right
)\mathrm{d}\tauup = t\langle p^i,\boldsymbol{\chi} \rangle
= t\boldsymbol{\nu}_0 \big \vert_i && (\mathrm{Note:} \ \theta=0  ) 
\end{align*}
where finite number of states guarantees
that the expectation operator and the integral can be
exchanged 
Recalling that optimal supervision
 elementwise maximizes the language measure vector
$\boldsymbol{\nu}_0$, we conclude
\begin{gather}\label{eqchimax}
\mathbf{E} \left
(\int_0^t\hat{\chiup}^{\star}(\tauup)\mathrm{d}\tauup \right
)\geqq \mathbf{E} \left
(\int_0^t\hat{\chiup}^{\#}(\tauup)\mathrm{d}\tauup \right
) \forall t \in[0,\infty)
\end{gather}
where the $\hat{\chiup}(t)$ for the plant configurations
$G^\star$ and $G^{\#}$ is denoted as  $\hat{\chiup}^{\star}$
and $\hat{\chiup}^{\#}$ respectively. 
Noting that the construction of the Petri Net observer
(Algorithm~\ref{Alg2}) implies that in
the case of perfect observation, each transition leads to
exactly one place, we conclude that the instantaneous measure
is given by
\begin{gather}
\hat{\nu}_\theta(t) = \boldsymbol{\nu}_\theta \big \vert_i \ \textrm{where the current state at
time $t$ is $q_i$}
\end{gather}
Furthermore, we recall from Corollary \ref{Corollary2.1}
\begin{align}\label{eqexpect}
 \Q\boldsymbol{\nu}_\theta =
\Q\boldsymbol{\chi} \Longrightarrow \mathbf{E} \left (
\hat{\nu}_\theta(t) \right ) =\mathbf{E} \left ( \hat{\chiup}(t)
\right ) \forall t \in [0,\infty)
\end{align} which leads to the following argument:
\begin{align*}
& \phantom{\Longrightarrow}\mathbf{E} \left
(\int_0^t\hat{\chiup}^{\star}(\tauup)\mathrm{d}\tauup \right
)\geqq \mathbf{E} \left
(\int_0^t\hat{\chiup}^{\#}(\tauup)\mathrm{d}\tauup \right
) \ \forall t \in[0,\infty)\\
 &\Longrightarrow
\int_0^t\mathbf{E} \left
(\hat{\chiup}^{\star}(\tauup)\right )\mathrm{d}\tauup \geqq
\int_0^t\mathbf{E}\left (\hat{\chiup}^{\#}(\tauup)\right
)\mathrm{d}\tauup
 \ \forall t \in[0,\infty)\\
&\Longrightarrow \int_0^t\mathbf{E} \left
(\hat{\nu}_\theta^{\star}(\tauup)\right )\mathrm{d}\tauup \geqq
\int_0^t\mathbf{E}\left (\hat{\nu}_\theta^{\#}(\tauup)\right
)\mathrm{d}\tauup
 \ \forall t \in[0,\infty), \forall \theta \in (0,1) \\
 &\Longrightarrow \mathbf{E} \left
(\int_0^t\hat{\nu}_\theta^{\star}(\tauup)\mathrm{d}\tauup \right
)\geqq \mathbf{E} \left
(\int_0^t\hat{\nu}_\theta^{\#}(\tauup)\mathrm{d}\tauup \right ) \ 
\forall t \in[0,\infty), \forall \theta \in (0,1)
\end{align*}
This completes the proof.
\end{proof}
\vspace{0pt}
Next we formalize a procedure of implementing an optimal supervision
policy from a knowledge of the optimal language measure vector for the
underlying plant. 
\begin{figure}[t]
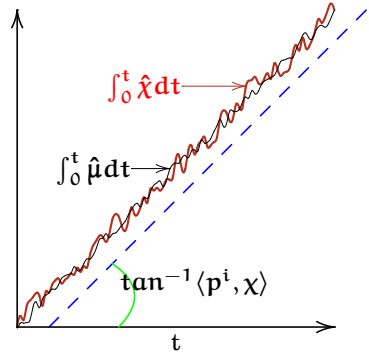

\begin{center}
\begin{minipage}{3.3in}
\hspace{50pt} \xy 0;/r.2pc/:(0,0)*{}="X1"; (50,50)*{}="X2";
"X1";"X2" **[thicker][BrickRed]\crv{ (0.00 , 0.35) & (0.50 , 2.88)
& (1.00 , -0.07) & (1.50 , 3.93) & (2.00 , 4.08) & (2.50 ,
4.72) & (3.00 , 5.49) & (3.50 , 5.50) & (4.00 , 2.03) & (4.50
, 6.65) & (5.00 , 4.73) & (5.50 , 6.95) & (6.00 , 7.26) &
(6.50 , 8.08) & (7.00 , 6.38) & (7.50 , 7.62) & (8.00 , 7.60)
& (8.50 , 8.87) & (9.00 , 7.66) & (9.50 , 10.12) & (10.00 ,
7.97) & (10.50 , 10.25) & (11.00 , 8.55) & (11.50 , 11.81) &
(12.00 , 12.99) & (12.50 , 13.60) & (13.00 , 14.64) & (13.50 ,
14.41) & (14.00 , 12.98) & (14.50 , 14.24) & (15.00 , 17.41) &
(15.50 , 17.89) & (16.00 , 17.78) & (16.50 , 15.76) & (17.00 ,
14.67) & (17.50 , 16.40) & (18.00 , 19.90) & (18.50 , 17.24) &
(19.00 , 17.70) & (19.50 , 18.94) & (20.00 , 21.40) & (20.50 ,
18.28) & (21.00 , 22.83) & (21.50 , 21.01) & (22.00 , 22.38) &
(22.50 , 21.19) & (23.00 , 21.98) & (23.50 , 23.49) & (24.00 ,
23.33) & (24.50 , 22.65) & (25.00 , 27.05) & (25.50 , 27.80) &
(26.00 , 24.15) & (26.50 , 24.26) & (27.00 , 28.59) & (27.50 ,
29.11) & (28.00 , 26.26) & (28.50 , 29.15) & (29.00 , 29.07) &
(29.50 , 30.81) & (30.00 , 27.72) & (30.50 , 28.70) & (31.00 ,
29.26) & (31.50 , 29.20) & (32.00 , 33.62) & (32.50 , 34.34) &
(33.00 , 34.04) & (33.50 , 32.11) & (34.00 , 32.26) & (34.50 ,
36.01) & (35.00 , 34.53) & (35.50 , 33.63) & (36.00 , 38.37) &
(36.50 , 38.98) & (37.00 , 38.77) & (37.50 , 39.95) & (38.00 ,
38.20) & (38.50 , 38.21) & (39.00 , 39.22) & (39.50 , 40.70) &
(40.00 , 37.92) & (40.50 , 42.61) & (41.00 , 40.71) & (41.50 ,
39.19) & (42.00 , 40.24) & (42.50 , 40.25) & (43.00 , 40.50) &
(43.50 , 43.78) & (44.00 , 45.09) & (44.50 , 42.45) & (45.00 ,
47.04) & (45.50 , 43.21) & (46.00 , 44.25) & (46.50 , 45.24) &
(47.00 , 46.72) & (47.50 , 49.37) & (48.00 , 47.43) & (48.50 ,
48.80) & (49.00 , 50.03) & (49.50 , 51.77)
};{\myar@{--}@[blue](5,0)*{};(55,50)*{}};
(0,0)*{}="X1"; (50,50)*{}="X2"; "X1";"X2" **[black]\crv{ (0.00
, -0.52) & (0.50 , 1.09) & (1.00 , 0.43) & (1.50 , 0.85) &
(2.00 , 0.50) & (2.50 , 3.50) & (3.00 , 4.49) & (3.50 , 2.25)
& (4.00 , 4.84) & (4.50 , 4.97) & (5.00 , 5.34) & (5.50 ,
6.18) & (6.00 , 5.78) & (6.50 , 6.52) & (7.00 , 7.88) & (7.50
, 8.01) & (8.00 , 8.37) & (8.50 , 8.14) & (9.00 , 10.10) &
(9.50 , 8.61) & (10.00 , 8.94) & (10.50 , 10.01) & (11.00 ,
10.98) & (11.50 , 10.53) & (12.00 , 11.35) & (12.50 , 12.34) &
(13.00 , 14.27) & (13.50 , 13.65) & (14.00 , 15.26) & (14.50 ,
13.60) & (15.00 , 14.95) & (15.50 , 14.67) & (16.00 , 15.39) &
(16.50 , 16.12) & (17.00 , 16.50) & (17.50 , 17.71) & (18.00 ,
18.51) & (18.50 , 19.32) & (19.00 , 19.76) & (19.50 , 18.32) &
(20.00 , 20.97) & (20.50 , 19.83) & (21.00 , 21.33) & (21.50 ,
22.53) & (22.00 , 22.60) & (22.50 , 23.46) & (23.00 , 21.71) &
(23.50 , 22.86) & (24.00 , 25.01) & (24.50 , 25.64) & (25.00 ,
26.02) & (25.50 , 25.54) & (26.00 , 26.87) & (26.50 , 27.31) &
(27.00 , 26.22) & (27.50 , 28.14) & (28.00 , 28.01) & (28.50 ,
27.11) & (29.00 , 30.25) & (29.50 , 28.71) & (30.00 , 29.16) &
(30.50 , 29.48) & (31.00 , 31.01) & (31.50 , 31.51) & (32.00 ,
33.48) & (32.50 , 31.16) & (33.00 , 31.70) & (33.50 , 34.69) &
(34.00 , 33.95) & (34.50 , 33.96) & (35.00 , 34.16) & (35.50 ,
35.48) & (36.00 , 36.69) & (36.50 , 35.52) & (37.00 , 37.93) &
(37.50 , 36.42) & (38.00 , 37.04) & (38.50 , 38.37) & (39.00 ,
38.87) & (39.50 , 39.07) & (40.00 , 39.40) & (40.50 , 39.54) &
(41.00 , 40.49) & (41.50 , 42.11) & (42.00 , 42.06) & (42.50 ,
42.28) & (43.00 , 42.57) & (43.50 , 42.14) & (44.00 , 44.12) &
(44.50 , 44.74) & (45.00 , 46.18) & (45.50 , 46.66) & (46.00 ,
45.41) & (46.50 , 45.13) & (47.00 , 46.74) & (47.50 , 47.43) &
(48.00 , 47.76) & (48.50 , 47.11) & (49.00 , 47.57) & (49.50 ,
48.85) } ;
{\myar@{->}_{\txt{t}}(0,0)*{};(50,0)*{}};
{\myar@{->}(0,0)*{};(0,50)*{}};
{\myar@{-}@/_0.6pc/@[green](16,0)*{};(15,10)*{}};(28,7)*{\large
\boldsymbol{tan^{-1}\langle p^i,\chi\rangle}};(20,38)*{\large
\boldsymbol{\red \int_0^t\hat{\chiup}dt}};(12,25)*{\large
\boldsymbol{\int_0^t\hat{\muup}dt}};
{\ar@[BrickRed]@{->}(26,38)*{};(36,38)*{}};
{\ar@{->}(18,25)*{};(24,25)*{}};
\endxy
\caption[Time integrals of instantaneous measure \&
instantaneous characteristic]{Time integrals of instantaneous
measure and instantaneous characteristic Vs operation
time}\label{figerg}
\end{minipage}
\end{center}
\end{figure}
\subsection{The Optimal Control Algorithm}\label{secregunobs}
For any finite state underlying plant \Gt and a specified unobservability map $\p$, it is possible to 
define a probabilistic transition system as a possibly infinite state 
generalization of PFSA which we denote as the entangled transition system corresponding
to the underlying plant and the specified unobservability map. In defining the entangled transition system (Definition~\ref{defentangtran}), we
use a similar formalism as stated in Section~\ref{PFSAmodel}, with
the exception of dropping the last argument for controllability specification in Eq.~\eqref{sextuple}. Controllability needs to handled separately to address the issues of partial controllability arising as a result of partial observation.
\begin{defn}\label{defentangtran}(Entangled Transition System:)
 For a given plant \Gt and an unobservability map $\p$, the entangled transition system $\mathscr{E}_{(G,\p)} = (Q_\mathscr{F},\Sigma,
\Delta,\tilde{\pi}_\mathscr{E},\chi_\mathscr{E})$ is defined as:
\begin{enumerate}
 \item The transition map $\Delta : Q_\mathscr{F} \times \Sigma^\star \rightarrow 
Q_\mathscr{F}$ is defined as :
\begin{gather*}
 \forall \alpha \in Q_\mathscr{F}, \ \Delta(\alpha,\omega) = \alpha\prod_{\sigma_1}^{\sigma_m}\Gamma^{\sigma_i} \ \mathrm{where} \ \omega = \sigma_1 \cdots \sigma_m 
\end{gather*}
\item The event generation probabilities  $\tilde{\pi}_\mathscr{E} : Q_\mathscr{F} 
\times \Sigma^\star \rightarrow [0,1]$ are specified as:
\begin{gather*}
 \tilde{\pi}_\mathscr{E}(\alpha,\sigma) = \sum_{i=1}^{i=\Crd(Q)}(1-\theta)\mathcal{N}(\alpha_i) \tilde{\pi}(q_i,\sigma)
\end{gather*}
\item The characteristic function $\chi_\mathscr{E}: Q_\mathscr{F} \rightarrow [-1,1]$ is defined as: 
$ \chi_\mathscr{E}(\alpha) = \langle \alpha,\boldsymbol{\chi} \rangle$
\end{enumerate}
\end{defn}
\begin{rem}\label{rem4p3}
 The definition of $\tilde{\pi}_\mathscr{E} $ is consistent in the  sense:
\begin{gather*}
 \forall \alpha \in Q_\mathscr{F}, \ \sum_{\sigma \in \Sigma}\tilde{\pi}_\mathscr{E}(\alpha,\sigma) = 
 = \sum_{i}\mathcal{N} (\alpha_i) (1-\theta) = 1 - \theta
\end{gather*}
implying that if $Q_\mathscr{F}$ is finite then $\mathscr{E}_{(G,\p)} $ is a \textbf{perfectly observable} terminating model with uniform termination probability $\theta$.
\end{rem}
\vspace{3pt}
\begin{prop}\label{propentangledmu}
The renormalized language measure $\nu^\mathscr{E}_\theta(\alpha)$ for the state $\alpha \in Q_\mathscr{F}$ of the entangled transition system \E can be computed as follows:
\begin{gather}
 \nu^\mathscr{E}_\theta(\alpha) = \langle \alpha , \boldsymbol{\nu}_\theta \rangle
\end{gather}
where $\boldsymbol{\nu}_\theta$ is the language measure vector for the underlying terminating
plant \Gt with uniform termination probability $\theta$.
\end{prop}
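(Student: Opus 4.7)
\emph{Plan.} My approach is to treat the entangled transition system $\mathscr{E}_{(G,\p)}$ as a terminating probabilistic generator over the (possibly infinite) state set $\QE$ with termination probability $\theta$, verify by substitution that the linear functional $V(\alpha) \triangleq \langle \alpha,\boldsymbol{\nu_\theta}\rangle$ satisfies the Bellman-type fixed-point equation characterizing its renormalized measure, and then appeal to uniqueness. Remark~\ref{rem4p3} supplies the required stochastic structure ($\sum_\sigma \tilde{\pi}_\mathscr{E}(\alpha,\sigma) = 1-\theta$ for every $\alpha\in \QE$), so the derivation producing Eq.~(\ref{algbrameasure}) carries over verbatim to yield
\begin{equation*}
\nu^\mathscr{E}_\theta(\alpha) \;=\; \theta\,\chi_\mathscr{E}(\alpha) + \sum_{\sigma\in\Sigma} \tilde{\pi}_\mathscr{E}(\alpha,\sigma)\, \nu^\mathscr{E}_\theta(\Delta(\alpha,\sigma)).
\end{equation*}
Plugging in $V(\alpha)=\langle \alpha,\boldsymbol{\nu_\theta}\rangle$, using $\chi_\mathscr{E}(\alpha)=\langle\alpha,\boldsymbol{\chi}\rangle$ and $\Delta(\alpha,\sigma)=\alpha\Gamma^\sigma$, and the rearrangement $\boldsymbol{\nu_\theta}=\theta\boldsymbol{\chi}+(1-\theta)\Pi\boldsymbol{\nu_\theta}$ of Eq.~(\ref{renormalizedMeasure}), the claim collapses to the algebraic identity
\begin{equation*}
(1-\theta)\langle \alpha,\Pi\boldsymbol{\nu_\theta}\rangle \;=\; \sum_{\sigma\in\Sigma} \tilde{\pi}_\mathscr{E}(\alpha,\sigma)\,\langle \alpha\Gamma^\sigma,\boldsymbol{\nu_\theta}\rangle.
\end{equation*}

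\emph{The algebraic core.} Establishing this identity is the principal obstacle. I would unpack the right-hand side using the closed form $\Gamma^\sigma_{i,\ell}=\bigl[\mathbb{I}-(1-\theta)\mathscr{P}(\Pi)\bigr]^{-1}_{\delta(q_i,\sigma),\ell}$ from Proposition~\ref{propinputarcw}, the explicit expression for $\tilde{\pi}_\mathscr{E}$ in Definition~\ref{defentangtran}, and the resolvent identity $H = \mathbb{I}+(1-\theta)\mathscr{P}(\Pi)\,H$ for $H\triangleq[\mathbb{I}-(1-\theta)\mathscr{P}(\Pi)]^{-1}$. Splitting the non-terminating transition matrix as $\Pi = \Pi_{\mathrm{obs}}+\mathscr{P}(\Pi)$ and Neumann-expanding $H$, the sum $\sum_\sigma \tilde{\pi}_\mathscr{E}(\alpha,\sigma)(\alpha\Gamma^\sigma)$ telescopes over lengths of completely unobservable suffixes and reassembles into $(1-\theta)\alpha\Pi$ acting to the left on $\boldsymbol{\nu_\theta}$. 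The normalization factor $\mathcal{N}$ inside $\tilde{\pi}_\mathscr{E}$ cancels against the total mass $\sum_j\alpha_j$ produced by the $\Gamma^\sigma$ action, which is the mechanism that restores $\alpha$-linearity on both sides.

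\emph{Uniqueness and conclusion.} The Bellman operator $(TW)(\alpha)=\theta\chi_\mathscr{E}(\alpha)+\sum_\sigma \tilde{\pi}_\mathscr{E}(\alpha,\sigma)\,W(\alpha\Gamma^\sigma)$ is a $(1-\theta)$-contraction in the sup norm on bounded real-valued functions over $\QE$, since $\sum_\sigma \tilde{\pi}_\mathscr{E}(\alpha,\sigma)=1-\theta$ uniformly in $\alpha$. By Lemma~\ref{lemobsstate} and Remark~\ref{remcompact}, $\QE\subset[0,1/\theta]^{\Crd(Q)}$ is bounded, and combined with $\|\boldsymbol{\nu_\theta}\|_\infty\leq 1$ from Proposition~\ref{Proposition2.1}, this bounds $V(\alpha)=\langle\alpha,\boldsymbol{\nu_\theta}\rangle$ on $\QE$. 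Banach's fixed-point theorem then identifies $V$ with the unique bounded solution $\nu^\mathscr{E}_\theta$, completing the proof.
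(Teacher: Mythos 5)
There is a genuine gap, and it sits exactly at the step you identify as ``the algebraic core.'' The identity
$(1-\theta)\langle \alpha,\Pi\boldsymbol{\nu}_\theta\rangle = \sum_{\sigma}\tilde{\pi}_\mathscr{E}(\alpha,\sigma)\langle\alpha\Gamma^\sigma,\boldsymbol{\nu}_\theta\rangle$
to which you reduce the claim is false. Take $Q=\{q_1,q_2\}$ with a single event $a$ from $q_1$ ($\tilde{\pi}(q_1,a)=1$, $\delta(q_1,a)=q_2$, $a$ unobservable at $q_1$) and a single observable event $b$ from $q_2$ back to $q_1$. At the pure state $\alpha=e_1$ the left-hand side is $(1-\theta)\boldsymbol{\nu}_\theta\vert_2\neq 0$ generically, while the right-hand side vanishes: $\tilde{\pi}_\mathscr{E}(e_1,a)=(1-\theta)$ but $e_1\Gamma^a=\boldsymbol{0}$, because Algorithm~\ref{AlgGammaFNO} zeroes the row of $\Gamma^\sigma$ at every state where $\sigma$ is unobservable. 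The structural reason the telescoping cannot work is an ordering mismatch: your one-step recursion charges the event probability $\tilde{\pi}_\mathscr{E}(\alpha,\sigma)$ to the \emph{pre-wandering} distribution $\alpha$ and then applies the unobservable resolvent $H=[\mathbb{I}-(1-\theta)\mathscr{P}(\Pi)]^{-1}$ \emph{afterwards} (via $\Gamma^\sigma$), so the operator you build is essentially $(1-\theta)(\Pi-\mathscr{P}(\Pi))H$ with the mass of unobservable events simply destroyed. The correct decomposition of $L(q_i)$ is by the \emph{first observable} symbol: every string is either completely unobservable (contributing $\theta H\boldsymbol{\chi}$) or of the form $\omega_1\sigma\omega_2$ with $\p(q_i,\omega_1)=\epsilon$ and $\sigma$ observable, which puts the resolvent on the \emph{left} and yields
\begin{gather*}
\boldsymbol{\nu}_\theta^\mathscr{E}=(1-\theta)H\big(\Pi-\mathscr{P}(\Pi)\big)\boldsymbol{\nu}_\theta^\mathscr{E}+\theta H\boldsymbol{\chi},
\end{gather*}
whose unique solution collapses to $[\mathbb{I}-(1-\theta)\Pi]^{-1}\theta\boldsymbol{\chi}=\boldsymbol{\nu}_\theta$ by the resolvent identity $\big(H^{-1}-(1-\theta)(\Pi-\mathscr{P}(\Pi))\big)=\mathbb{I}-(1-\theta)\Pi$. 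The two operators $H(\Pi-\mathscr{P}(\Pi))$ and $(\Pi-\mathscr{P}(\Pi))H$ do not coincide, and only the former admits $\boldsymbol{\nu}_\theta$ as a fixed point of the affine map.

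Your surrounding scaffolding is sound and close in spirit to what is actually needed: the contraction/uniqueness argument (using $\sum_\sigma\tilde{\pi}_\mathscr{E}(\alpha,\sigma)=1-\theta$, the bound $Q_\mathscr{F}\subset[0,1/\theta]^{\Crd(Q)}$ from Lemma~\ref{lemobsstate}, and $\|\boldsymbol{\nu}_\theta\|_\infty\leq 1$) is a legitimate way to justify that the fixed-point equation determines the measure, and the extension from pure states to general $\alpha\in Q_\mathscr{F}$ can be done either your way or, as in the paper, by the homogeneity $\nu_\theta^\mathscr{E}(k\alpha)=k\,\nu_\theta^\mathscr{E}(\alpha)$ following from the linearity of $\tilde{\pi}_\mathscr{E}$ and $\chi_\mathscr{E}$ in $\alpha$. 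To repair the proof you must replace the one-step recursion inherited verbatim from Eq.~(\ref{algbrameasure}) by the first-observable-event decomposition above; the Bellman operator associated with $(\Delta,\tilde{\pi}_\mathscr{E})$ as literally defined does not reproduce the string-level measure of the underlying plant.
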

\begin{proof}
We first compute the measure of the pure states $\mathcal{B} \subset Q_\mathscr{F}$ of \E denoted by the vector $\boldsymbol{\nu}_\theta^\mathscr{E}$. Since every string generated by the
 Phantom automaton is completely unobservable, it follows that the measure of the empty string $\epsilonup$ from any state $\alpha \in \mathcal{B}$ is given by
$\alpha\big [ \mathbb{I} - (1-\theta)\mathscr{P}(\Pi) 
\big ]^{-1}\boldsymbol{\chi}$. Let $\alpha$ correspond to the state $q_i \in Q$ in the underlying plant.
Then the measure of the set of all strings generated from $\alpha \in \mathcal{B}$ having at least one observable 
transition in the underlying plant is given by 
\begin{gather}
 \sum_j(1-\theta)\bigg [ \mathbb{I} - (1-\theta)\mathscr{P}(\Pi) \bigg ]^{-1}\bigg 
( \Pi - \mathscr{P}(\Pi) \bigg )\bigg\vert_{ij}\big \{ \boldsymbol{\nu}_\theta^\mathscr{E}\big \}_{j}
\end{gather}
which is simply the
measure of the set of all strings of the form $\omega_1\sigma\omega_2$ where $\p(\omega_1\sigma\omega_2) = \sigma\p(\omega_2)$.
It therefore follows from the additivity of  measures that
\begin{align}
 \boldsymbol{\nu}_\theta^\mathscr{E} &= (1-\theta)\bigg [ \mathbb{I} - (1-\theta)\mathscr{P}(\Pi) \bigg ]^{-1}\bigg ( \Pi - \mathscr{P}(\Pi) \bigg )\boldsymbol{\nu}_\theta^\mathscr{E} \notag \\ & \mspace{250mu} +  \bigg [ \mathbb{I} - (1-\theta)\mathscr{P}(\Pi) 
\bigg ]^{-1}\boldsymbol{\chi} \notag \\
\Rightarrow \boldsymbol{\nu}_\theta^\mathscr{E} &= \Bigg [ \mathbb{I} - (1-\theta)\bigg [ \mathbb{I} - (1-\theta)\mathscr{P}(\Pi) \bigg ]^{-1}\bigg ( \Pi -
 \mathscr{P}(\Pi) \bigg ) \Bigg ]^{-1}\notag \\ & \mspace{250mu} \times \bigg [ \mathbb{I} - (1-\theta)\mathscr{P}(\Pi) 
\bigg ]^{-1}\boldsymbol{\chi} \notag \\
\Rightarrow \boldsymbol{\nu}_\theta^\mathscr{E} &= 
\bigg [ \mathbb{I} - (1-\theta)\Pi \bigg ]^{-1}\boldsymbol{\chi} 
=\boldsymbol{\nu}_\theta
\end{align}
 which implies that for any pure state $\alpha \in \mathcal{B}$, we have $\nu_\theta^\mathscr{E}(\alpha) = \langle \alpha,\boldsymbol{\nu}_\theta\rangle$.
The general result then follows from  the following linear relation arising from the
definitions of $\tilde{\pi}_\mathscr{E}$ and $\chi_\mathscr{E}$:
\begin{gather}
 \forall \alpha \in \mathcal{B}, \forall k \in \mathbb{R}, \nu_\theta^\mathscr{E}(k\alpha) = k \nu_\theta^\mathscr{E}(\alpha)
\end{gather}
This completes the proof.
\end{proof}
%
\begin{defn}\label{definstcharentang}(Instantaneous Characteristic for Entangled Transition Systems:) Given an underlying  plant  \Gt and an unobservability map $\p$,
the instantaneous characteristic $\hat{\chiup}_\mathscr{E}(t)$ for the
corresponding entangled transition system \E is defined as a
function of plant operation time $t\in[0,\infty)$ as follows:
 \begin{gather}
\hat{\chiup}_\mathscr{E}(t) = \langle \alpha(t), \boldsymbol{\chi} \rangle
\end{gather}
where $\alpha(t)$ is the entangled state occupied at time $t$ 
\end{defn}
\begin{defn}\label{defninstmentangled}
(Instantaneous Measure For Partially Observable Plants:) Given an underlying  plant 
\Gt and an unobservability map $\p$,
the instantaneous measure ($\displaystyle \hat{\nu}_\theta(t)$) is defined as a
function of plant operation time $t\in[0,\infty)$ as follows:
\begin{gather}
\hat{\nu}_\theta(t) = \langle \alpha(t), \boldsymbol{\nu}^\mathscr{E}_\theta \rangle
\end{gather}
where $\alpha \in \QE$ is the entangled state at time $t$ and 
$\boldsymbol{\nu}^\mathscr{E}_\theta$ is the renormalized language measure vector for the
corresponding entangled transition system \E.
\end{defn}
\begin{cor}\label{lementinst}(Corollary to Proposition~\ref{propentangledmu}) For a given plant \Gt and an unobservability map $\p$, the instantaneous measure 
$\hat{\nu}_\theta:[0,\infty) \rightarrow [-1,1]$ is given by
\begin{gather}
 \hat{\nu}_\theta(t) = \langle \alpha(t), \boldsymbol{\nu}_\theta \rangle
\end{gather}
where $\alpha(t)$ is the current  state of the entangled transition system \E at time $t$ and
 $\boldsymbol{\nu}_\theta$ is the language measure vector for the underlying plant $G$.
\end{cor}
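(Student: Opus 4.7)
The plan is to recognize that this corollary follows almost immediately by composing Definition~\ref{defninstmentangled} with Proposition~\ref{propentangledmu}; the work is essentially bookkeeping, not new analysis. Definition~\ref{defninstmentangled} sets $\hat{\nu}_\theta(t) = \langle \alpha(t), \boldsymbol{\nu}_\theta^{\mathscr{E}} \rangle$, which I will first interpret unambiguously: the vector $\boldsymbol{\nu}_\theta^{\mathscr{E}}$ here should be read as the tuple of renormalized measures evaluated on the pure-state basis $\mathcal{B}$, so that the inner product with the entangled-state vector $\alpha(t) \in \mathbb{R}^{\Crd(Q)}$ yields the renormalized scalar measure $\nu_\theta^{\mathscr{E}}(\alpha(t))$ by linearity (which is exactly the identity appearing at the very end of the proof of Proposition~\ref{propentangledmu}).

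Next, I would invoke Proposition~\ref{propentangledmu} directly. That proposition asserts $\nu_\theta^{\mathscr{E}}(\alpha) = \langle \alpha, \boldsymbol{\nu}_\theta\rangle$ for every $\alpha \in Q_{\mathscr{F}}$, where $\boldsymbol{\nu}_\theta$ is the renormalized language measure vector of the \emph{underlying} plant $G_\theta$. Specializing this to $\alpha = \alpha(t)$, the current entangled state produced by the FNO dynamics at time $t$, gives
\begin{gather*}
\hat{\nu}_\theta(t) \;=\; \nu_\theta^{\mathscr{E}}(\alpha(t)) \;=\; \langle \alpha(t), \boldsymbol{\nu}_\theta\rangle,
\end{gather*}
which is the claimed identity.

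Finally, I would check the range condition $\hat{\nu}_\theta(t) \in [-1,1]$ that is built into the statement. This is inherited from the corresponding bound on $\nu_\theta^{\mathscr{E}}$ together with Lemma~\ref{lemobsstate}, which guarantees that $\alpha(t)$ lives in the compact set $[0,1/\theta]^{\Crd(Q)}\setminus \boldsymbol{0}$; combined with $\|\boldsymbol{\nu}_\theta\|_\infty \leqq 1$ and the normalization built into Definition~\ref{defrenormmeas}, the scalar $\langle \alpha(t), \boldsymbol{\nu}_\theta\rangle$ stays in $[-1,1]$. The only subtle point, and the one I expect to be the mild obstacle, is clarifying the dual usage of $\boldsymbol{\nu}_\theta^{\mathscr{E}}$ (as a function on entangled states versus a vector indexed by $\mathcal{B}$) and ensuring that the linearity identity $\nu_\theta^{\mathscr{E}}(k\alpha) = k\,\nu_\theta^{\mathscr{E}}(\alpha)$ established in the proof of Proposition~\ref{propentangledmu} extends to arbitrary non-negative linear combinations of pure states; once this is stated explicitly, the corollary follows with no additional computation.
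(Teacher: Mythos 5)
Your proposal is correct and follows exactly the route the paper takes: the paper's own proof is a one-line citation of Definitions~\ref{defninstmentangled}, \ref{defentangtran} and Proposition~\ref{propentangledmu}, and your argument is simply a fleshed-out version of that composition (including the observation, already established inside the proof of Proposition~\ref{propentangledmu}, that $\boldsymbol{\nu}_\theta^{\mathscr{E}} = \boldsymbol{\nu}_\theta$ on the pure-state basis). The additional remarks on the range condition and on linearity over non-negative combinations of pure states are consistent with the paper and do not change the approach.
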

\vspace{3pt}
\begin{proof}
 Follows from Definitions~\ref{defninstmentangled}, \ref{defentangtran} and 
Proposition~\ref{propentangledmu}.
\end{proof}
Proposition~\ref{propentangledmu} has a crucial consequence. It follows that elementwise maximization of the
measure vector $\boldsymbol{\nu}_\theta$ for the underlying plant automatically maximizes the measures of each of the entangled states irrespective
of the particular unobservability map $\p$. This allows us to directly formulate
the optimal supervision policy for cases where the cardinality of the entangled state set is finite. 
However, before we embark upon the construction of such policies, we need to address the 
controllability issues arising due to state entanglement.
We note that 
for a given  entangled state $\alpha \in Q_\mathscr{F}\setminus \mathcal{B}$, an event $\sigma \in \Sigma$
may be controllable from some but not all of the states $q_i \in Q$ that satisfy $\alpha_i > 0$. 
Thus the notion of controllability introduced in Definition~\ref{optifull-contdef}
needs to be generalized; \textit{disabling of a transition $\sigma \in \Sigma$ from an entangled state can still change
the current state}. We formalize the analysis by defining a set of event-indexed disabled transition matrices by
suitably modifying $\Gamma^\sigma$ as follows:
\begin{defn}\label{defgencontro}
 For a given plant \G, the event indexed disabled transition matrices $\Gamma_\mathcal{D}^\sigma$ is defined as
\begin{gather*}
 \Gamma_\mathcal{D}^\sigma\big \vert_{ij} =  \left \{ \begin{array}{ll} \delta_{ij},  & \mathrm{if} \ \sigma \ \mathrm{is \ controllable \ at} \ q_i \ \mathrm{and} \ \p(q_i,\sigma) = \sigma\\
					\Gamma^\sigma_{ij}, & \mathrm{otherwise}
                                                       \end{array}
\right.
\end{gather*}
Evolution of the current entangled state $\alpha$ to $\alpha'$ due to the firing of the 
disabled transition $\sigma \in \Sigma$ is then computed as:
\begin{gather}
 \alpha' = \alpha \Gamma_\mathcal{D}^\sigma
\end{gather}
\end{defn}
\vspace{3pt}
\begin{rem}\label{remcontassump}
 If an event $\sigma \in \Sigma$ is uncontrollable 
 at every state $q_i \in Q$, then 
$\Gamma^\sigma_\mathcal{D} = \Gamma^\sigma$. On the other hand, if event $\sigma$ is always controllable 
(and hence by our assumption always observable), then we have $\Gamma^\sigma_\mathcal{D} = \mathbb{I}$.
In general, we have $\Gamma^\sigma_\mathcal{D} \neq \Gamma^\sigma \neq \mathbb{I}$.
\end{rem}\vspace{3pt}
Proposition~\ref{propmuint} shows that  optimal supervision in the case of perfect observation yields a
 policy that maximizes the time-integral of the
instantaneous measure. We now outline a procedure (See Algorithm~\ref{PrAlg3}) to maximize
$\int_0^t\hat{\nu}_\theta(\tauup)\mathrm{d}\tauup$ when the
underlying plant has a non-trivial unobservability
map.
\begin{algorithm}
\footnotesize
  \SetLine
  \SetKwData{Left}{left}
  \SetKwData{This}{this}
  \SetKwData{Up}{up}
  \SetKwFunction{Union}{Union}
  \SetKwFunction{FindCompress}{FindCompress}
  \SetKwInOut{Input}{input}
  \SetKwInOut{Output}{output}
  \caption{Optimal Control under Partial Observation (Preliminary Procedure For Illustration)}\label{PrAlg3}
\dontprintsemicolon
\Input{\G,$\p$, Initial State $q_0$ for $G$}
\Begin{
\While(\tcc*[f]{Infinite Loop}){true}{
 	\textrm{\sffamily Compute the optimal measure vector} $\boldsymbol{\nu}_\star$ \textrm{\sffamily for}  $G$ \;
	\textrm{\sffamily Set the current entangled state to }$\alpha = q_0\big [ \mathbb{I} - \mathscr{P}(\Pi) \big ]^{-1}$\;
		\If{current entangled state is $\alpha$}{
			\For{$\sigma \in \Sigma$}{
				\If{ $\langle \alpha\Gamma^\sigma,\boldsymbol{\nu}_\star \rangle <  \langle \alpha\Gamma^\sigma_\mathscr{D},\boldsymbol{\nu}_\star \rangle$ }{
					\textrm{\sffamily Disable} $\sigma$\;
				}
			}
		}
		Observe next event $\sigma \in \Sigma$\;
		\eIf{\textrm{\sffamily  $\sigma$ is enabled}}{
			\textrm{\sffamily Update the  entangled state to} $\alpha\Gamma^\sigma$\;
		}
		{
			\textrm{\sffamily Update the  entangled state to} $\alpha\Gamma^\sigma_\mathscr{D}$\;
		}
}
}
\end{algorithm}
\begin{lem}\label{lemassum2}
Let the following condition be satisfied for  a plant \G and an unobservability map $\p$:
 \begin{gather}
 \Crd(Q_\mathscr{F}) < \infty \label{eqcont}
\end{gather}
Then the control actions generated by Algorithm~\ref{PrAlg3}
is optimal in the sense that
\begin{gather}
\mathbf{E} \left
(\int_0^t\hat{\nu}_\theta^{\star}(\tauup)\mathrm{d}\tauup \right
)\geqq \mathbf{E} \left
(\int_0^t\hat{\nu}_\theta^{\#}(\tauup)\mathrm{d}\tauup\right ) \
\forall t \in [0,\infty), \forall \theta \in (0,1)
\end{gather}
where $\hat{\nu}_\theta^\star(t)$ and $\hat{\nu}_\theta^{\#}(t)$ are the
instantaneous measures at time $t$ for control actions generated by
Algorithm~\ref{PrAlg3} and an arbitrary policy
respectively.
\end{lem}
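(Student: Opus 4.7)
The plan is to reduce the partially observable problem to a perfectly observable one on the entangled transition system and then invoke the already-established results for the latter. The hypothesis $\Crd(Q_\mathscr{F})<\infty$ in Eq.~\eqref{eqcont} is the key enabler: by Remark~\ref{rem4p3}, $\mathscr{E}_{(G,\p)}$ becomes a \emph{perfectly observable} terminating PFSA with uniform termination probability $\theta$, so the full machinery of Section~\ref{subsecformulatesoln} and in particular Proposition~\ref{propmuint} apply on the entangled system, with its transition matrices $\{\Gamma^\sigma,\Gamma^\sigma_\mathscr{D}\}$, characteristic function $\chi_\mathscr{E}$, and state-based controllability induced by Definition~\ref{defgencontro}.

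Next I would argue that Algorithm~\ref{PrAlg3}, although phrased in terms of the measure $\boldsymbol{\nu}_\star$ of the \emph{underlying} plant, actually implements the measure-theoretic optimal supervision of the \emph{entangled} system. This rests on Proposition~\ref{propentangledmu}, which asserts $\nu^\mathscr{E}_\theta(\alpha)=\langle\alpha,\boldsymbol{\nu}_\theta\rangle$: since the measure of every entangled state is linear in $\boldsymbol{\nu}_\theta$, elementwise maximization of $\boldsymbol{\nu}_\theta$ automatically maximizes $\nu^\mathscr{E}_\theta(\alpha)$ simultaneously for every $\alpha\in Q_\mathscr{F}$. Consequently, the disabling rule $\langle\alpha\Gamma^\sigma,\boldsymbol{\nu}_\star\rangle<\langle\alpha\Gamma^\sigma_\mathscr{D},\boldsymbol{\nu}_\star\rangle$ is nothing but the standard ``disable iff the disabled-successor measure exceeds the enabled-successor measure'' criterion evaluated inside $\mathscr{E}_{(G,\p)}$, which by Proposition~\ref{proposition-4.2} converges to the uniquely maximally permissive optimal supervisor $\mathscr{E}^\star$. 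Invoking Proposition~\ref{propmuint} on $\mathscr{E}_{(G,\p)}$ then yields the inequality
\begin{gather*}
\mathbf{E}\!\left(\int_0^t\hat{\nu}^{\star}_\theta(\tauup)\mathrm{d}\tauup\right)\geqq \mathbf{E}\!\left(\int_0^t\hat{\nu}^{\#}_\theta(\tauup)\mathrm{d}\tauup\right),
\end{gather*}
and Corollary~\ref{lementinst} confirms that the $\hat\nu_\theta$ appearing in that argument is exactly the instantaneous measure of Definition~\ref{defninstmentangled}.

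The main obstacle is reconciling the generalized controllability of Definition~\ref{defgencontro} with the classical Definition~\ref{optifull-contdef} used when proving Proposition~\ref{propmuint}. In the perfectly observable setting, disabling a controllable $\sigma$ merely induces a self-loop, i.e.\ $\Gamma^\sigma_\mathscr{D}=\mathbb{I}$; in the entangled setting, $\sigma$ may be controllable at some but not all $q_i$ with $\alpha_i>0$, so the disable action actually redirects mass through $\Gamma^\sigma_\mathscr{D}\neq\mathbb{I}$ (Remark~\ref{remcontassump}). I would handle this by reinterpreting the enable/disable choice at each $\alpha\in Q_\mathscr{F}$ as a binary supervisory decision between two pre-computed successor states $\alpha\Gamma^\sigma$ and $\alpha\Gamma^\sigma_\mathscr{D}$; both lie in the finite set $Q_\mathscr{F}$, so the decision is structurally identical to a standard binary enable/disable on a perfectly observable finite-state PFSA. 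Once this equivalence is in place, Propositions~\ref{proposition-4.2} and~\ref{propmuint} transfer verbatim, with the Cesaro-limit computation $\Q$ now performed on the transition matrix of $\mathscr{E}_{(G,\p)}$ rather than of $G$; the remaining algebraic steps are routine and parallel those of the proof of Proposition~\ref{propmuint}.
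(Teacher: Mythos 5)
Your overall strategy---treat the finite entangled transition system as a perfectly observable terminating PFSA via Remark~\ref{rem4p3}, use the linearity $\nu^\mathscr{E}_\theta(\alpha)=\langle\alpha,\boldsymbol{\nu}_\theta\rangle$ of Proposition~\ref{propentangledmu} to conclude that elementwise maximization of $\boldsymbol{\nu}_\theta$ for the underlying plant simultaneously maximizes every entangled-state measure, and then invoke the perfectly observable optimality machinery (Lemma~\ref{lemremmeas}, Proposition~\ref{propmuint})---is exactly the paper's route, and it correctly disposes of the case in which every event is either uncontrollable at every state or controllable at every state where it is observable, i.e.\ where $\Gamma^\sigma_\mathscr{D}=\Gamma^\sigma$ or $\alpha\Gamma^\sigma_\mathscr{D}=\alpha$ (Remark~\ref{remcontassump}).

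The gap is in your treatment of partial controllability. You rightly flag it as the main obstacle, but your resolution---that the binary choice between the successors $\alpha\Gamma^\sigma$ and $\alpha\Gamma^\sigma_\mathscr{D}$ is ``structurally identical to a standard binary enable/disable'' so that Propositions~\ref{proposition-4.2} and~\ref{propmuint} ``transfer verbatim''---does not hold. The monotonicity and optimality proofs behind Proposition~\ref{proposition-4.2} are written for the disabling semantics of Definition~\ref{contapp}, in which a disabled transition becomes a \emph{self-loop}: the redistributed probability mass returns to the current state, and the algebra of the Monotonicity Lemma exploits precisely that. When $\Gamma^\sigma_\mathscr{D}\neq\mathbb{I}$, disabling sends the mass to a third state $q_k$ distinct from both the current state and the enabled destination, and one must re-prove that redistributing mass toward the destination of higher measure, $\Pi^{\#}_{ij}=\Pi_{ij}\pm\beta_{ij}$ and $\Pi^{\#}_{ik}=\Pi_{ik}\mp\beta_{ij}$, still yields $\boldsymbol{\nu}^{\#}_\theta\eg\boldsymbol{\nu}_\theta$. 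That is exactly the content of the Generalized Monotonicity Lemma (Proposition~\ref{propmonotone} in Appendix~\ref{appmono}), which the paper proves separately and on which its proof of this lemma explicitly rests; your argument assumes it silently. Without it, the claim that the iteration implicit in Algorithm~\ref{PrAlg3} converges to an optimal (rather than merely stationary) policy is unsupported. Supplying that lemma, or an equivalent computation showing $\Delta_i\boldsymbol{\nu}_\theta\geqq 0$ for the generalized redistribution, closes the gap; the remainder of your proof matches the paper's.
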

\vspace{0pt}
\begin{proof}
\textbf{Case 1:} First we consider the case where the following condition is true:
\begin{gather}
 \forall \sigma \in \Sigma,  \ \left (\Gamma^\sigma_\mathscr{D} = \Gamma^\sigma\right ) \bigvee \left ( \forall \alpha \in Q_\mathscr{F},\  \alpha\Gamma^\sigma_\mathscr{D} 
= \alpha \right )
\end{gather}
which can be paraphrased as follows: 
\begin{quote}
 \textit{Each event is either uncontrollable at every 
state $q\in Q$ in the underlying plant 
\Gt  or is controllable at
every state at which it is observable.}
\end{quote}
We note that the entangled transition system qualifies 
 as a perfectly observable probabilistic finite state machine
(See Remark~\ref{rem4p3}) since the unobservability
effects have been
eliminated by introducing the entangled states. If the above condition stated in Eq.~\eqref{eqcont} is true, then no generalization 
of the notion of event controllability in \E is required (See Definition~\ref{defgencontro}).
Under this assumption, the claim of the lemma then  follows from Lemma~\ref{lemremmeas} by noting that Algorithm~\ref{PrAlg3} under the above assumption
reduces to the procedure stated in Algorithm~\ref{Algoonlineopt} when we view the entangled system as a perfectly observable PFSA model.\\
\textbf{Case 2:} Next we consider the general scenario where the condition in Eq.~\eqref{eqcont} is relaxed. We note that the key to the online implementation result in 
stated Lemma~\ref{lemremmeas} is the Monotonicity lemma proved in \cite{CR07} which states
that for any given terminating plant \Gt with uniform termination probability $\theta$, the following iteration 
sequence elementwise increases the measure vector monotonically:
\begin{quote}
 1. Compute $\boldsymbol{\nu}_\theta$\\
2. If $\boldsymbol{\nu}_\theta \vert_i < \boldsymbol{\nu}_\theta \vert_j$, then disable all events $q_i \xrightarrow{\sigma} q_j$, otherwise
enable all events $q_i \xrightarrow{\sigma} q_j$ \\ 
3. Go to step 1.
\end{quote}
The proof of the Monotonicity Lemma~\cite{CR07} assumes that ``disabling'' $q_i \xrightarrow{\sigma} q_j$ replaces it with  a self loop at state $q_i$ labelled $\sigma$ with the same generation probability; $i.e.$ $\widetilde{\Pi}(q_i,\sigma)$ remains unchanged.
Now if there exists $\sigma \in \Sigma$ with $\Gamma^\sigma_\mathscr{D} \neq \mathbb{I}$, then we need to consider the fact that on disabling $\sigma$,
the new transition is no longer a self loop, but ends up in some other state $q_k \in Q$. Under this more general situation, we claim that
Algorithm~\ref{PrAlg3} is true; or in other words, we claim that
 the following procedure
elementwise increases the measure vector monotonically:
\begin{quote}
 1. Compute $\boldsymbol{\nu}_\theta$\\
2. Let $q_i \xrightarrow{\sigma} q_j$ (if enabled) and $q_i \xrightarrow{\sigma} q_k$ (if disabled)\\
3. If $\boldsymbol{\nu}_\theta \vert_j < \boldsymbol{\nu}_\theta \vert_k$, then disable  $q_i \xrightarrow{\sigma} q_j$, otherwise
enable  $q_i \xrightarrow{\sigma} q_j$ \\ 
4. Go to step 1.
\end{quote}
which is guaranteed by Proposition~\ref{propmonotone} in Appendix~\ref{appmono}. Convergence of this iterative process and the optimality 
of the resulting supervision policy in the sense of Definition~\ref{pdef} can be worked out exactly on similar lines as shown in \cite{CR07}. This completes the proof. 
\end{proof}
In order to extend the result of Lemma~\ref{lemassum2} to the general case where
the cardinality of the entangled state set can be infinite, we need 
to introduce a sequence of finite state approximations to the
potentially infinite state entangled transition system. This would allow us to
work out the above extension as a natural consequence of
continuity arguments. The finite state approximations are parametrized 
by $\eta \in (0,1]$ which approaches $0$ from above as we derive closer
and closer approximations. The formal definition  of such an 
$\eta$-Quantized Approximation for \E is stated next:
\begin{defn}\label{defetaapprox}
 ($\eta$-Quantized Approximation:) For a plant \Gt, an unobservability map $\p$ and a given $\eta \in (0,1]$, a probabilistic finite state machine
$\mathscr{E}_{(G,\p)}^\eta = (Q_\mathscr{F}^\eta,\Sigma,
\Delta^\eta,\tilde{\pi}_\mathscr{E},\chi_\mathscr{E})$ qualifies as a $\eta$-quantized approximation of the 
corresponding  entangled transition
 system  \E if 
\begin{gather}
 \Delta^\eta(\alpha,\omega) = \zeta_\eta(\Delta(\alpha,\omega))
\end{gather}
where $\zeta_\eta:[0,\frac{1}{\theta}]^{\Crd(Q)} \rightarrow Q_\mathscr{F}^\eta$ is a quantization map satisfying:
\begin{subequations}
\begin{align}
 &\Crd(Q_\mathscr{F}^\eta) < \infty \\
&\forall \alpha \in \mathcal{B}, \ \zeta_\eta(\alpha) = \alpha \\
&\forall \alpha \in Q_\mathscr{F}, \ \vert \vert \zeta_\eta(\alpha) - \alpha \vert \vert_\infty \leqq \eta
\end{align}
\end{subequations}
where $\vert \vert \cdot \vert \vert_\infty$ is the standard max norm.
Furthermore, we denote the language measure of the state $\alpha \in Q_\mathscr{F}^\eta$ as $\nu^\eta_\theta(\alpha)$ and the measure vector
for the pure states $\alpha \in \mathcal{B}$ is denoted as $\boldsymbol{\nu}_\theta^\eta$.
\end{defn}

We note the following:
\begin{enumerate}
\item For a given $\eta \in (0,1]$, there may exist uncountably infinite number of distinct probabilistic finite state machines that
qualify as a $\eta$-quantized approximation to \E; $i.e.$ the approximation is not unique.
 \item $\displaystyle \lim_{\eta \rightarrow 0^+} \mathscr{E}_{(G,\p)}^\eta = \mathscr{E}_{(G,\p)}$
\item The compactness of $[0,\frac{1}{\theta}]^{\Crd(Q)}$ is crucial in the definition.
\item The set of pure states of \E is a subset of $Q_\mathscr{F}^\eta$, $i.e.$, $\mathcal{B} \subset Q_\mathscr{F}^\eta$.
\item The measure of an arbitrary state $\alpha \in Q_\mathscr{F}^\eta$ is given by
$\langle \alpha , \boldsymbol{\nu}_\theta^\eta\rangle$.
\end{enumerate}
\begin{lem}\label{lemcontinuity}
 The language measure vector $\boldsymbol{\nu}_\theta^\eta$ for the set of pure states $\mathcal{B}$ for any
$\eta$-quantized approximation of \E, is upper semi-continuous w.r.t. $\eta$ at $\eta= 0$.
\end{lem}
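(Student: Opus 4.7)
The plan is to realize both the approximating measure vector $\boldsymbol{\nu}_\theta^\eta$ and the true entangled measure of Proposition~\ref{propentangledmu} as fixed points of contraction operators, and then apply the standard perturbation estimate for contractions with the perturbation size controlled by the quantization parameter $\eta$. First, since $\mathscr{E}_{(G,\p)}^\eta$ is a finite-state perfectly observable PFSA with uniform termination probability $\theta$ (Remark~\ref{rem4p3}), its measure function $\nu_\theta^\eta : Q_\mathscr{F}^\eta \to [-1,1]$ is the unique fixed point of
\begin{gather*}
T^\eta(v)(\alpha) = \chi_\mathscr{E}(\alpha) + \sum_{\sigma\in\Sigma}\tilde{\pi}_\mathscr{E}(\alpha,\sigma)\, v\bigl(\zeta_\eta(\Delta(\alpha,\sigma))\bigr),
\end{gather*}
which, by $\sum_\sigma \tilde{\pi}_\mathscr{E}(\alpha,\sigma) = 1-\theta$, is a $(1-\theta)$-contraction in the sup norm. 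On the other hand, the true measure extended off $Q_\mathscr{F}$ by the linear functional $\tilde{\nu}(\alpha) \triangleq \langle \alpha,\boldsymbol{\nu}_\theta\rangle$ of Proposition~\ref{propentangledmu} satisfies the corresponding unquantized Bellman identity $\tilde{\nu}(\alpha) = \chi_\mathscr{E}(\alpha) + \sum_\sigma \tilde{\pi}_\mathscr{E}(\alpha,\sigma)\tilde{\nu}(\Delta(\alpha,\sigma))$.

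Next I would feed $\tilde{\nu}$ into the standard contraction-perturbation bound
\begin{gather*}
\bigl\lVert \boldsymbol{\nu}_\theta^\eta - \tilde{\nu}\big|_{Q_\mathscr{F}^\eta}\bigr\rVert_\infty \leqq \frac{1}{\theta}\,\lVert T^\eta(\tilde{\nu}) - \tilde{\nu}\rVert_\infty
\end{gather*}
and estimate the right-hand side directly via the quantization error. For each $\alpha \in Q_\mathscr{F}^\eta$ and $\sigma\in\Sigma$, Definition~\ref{defetaapprox} gives $\lVert \zeta_\eta(\Delta(\alpha,\sigma)) - \Delta(\alpha,\sigma)\rVert_\infty \leqq \eta$, while Definition~\ref{defrenormmeas} ensures $\lVert \boldsymbol{\nu}_\theta \rVert_\infty \leqq 1$, so
\begin{gather*}
\bigl| \tilde{\nu}(\zeta_\eta(\Delta(\alpha,\sigma))) - \tilde{\nu}(\Delta(\alpha,\sigma))\bigr| = \bigl|\langle \zeta_\eta(\Delta(\alpha,\sigma)) - \Delta(\alpha,\sigma),\,\boldsymbol{\nu}_\theta\rangle\bigr| \leqq \Crd(Q)\,\eta.
\end{gather*}
Summing against $\tilde{\pi}_\mathscr{E}(\alpha,\cdot)$ yields $\lVert T^\eta(\tilde{\nu}) - \tilde{\nu}\rVert_\infty \leqq (1-\theta)\Crd(Q)\,\eta$, and therefore $\lVert \boldsymbol{\nu}_\theta^\eta - \tilde{\nu}\rVert_\infty \leqq (1-\theta)\Crd(Q)\,\eta/\theta$. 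Since $\mathcal{B} \subset Q_\mathscr{F}^\eta$ and $\tilde{\nu}|_\mathcal{B}$ agrees with the underlying plant's renormalized measure $\boldsymbol{\nu}_\theta$, this forces $\boldsymbol{\nu}_\theta^\eta \to \boldsymbol{\nu}_\theta$ on $\mathcal{B}$ as $\eta \to 0^+$, giving full continuity at $\eta = 0$ from which the required upper semi-continuity follows.

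The main obstacle, and the reason the whole argument works, is the linear-functional representation of the true measure supplied by Proposition~\ref{propentangledmu}: without the identification $\tilde{\nu}(\alpha) = \langle\alpha,\boldsymbol{\nu}_\theta\rangle$, the quantization step would not be Lipschitz-controllable, since $\tilde{\nu}$ would a priori be defined only on $Q_\mathscr{F}$ with no guaranteed regularity. The compactness of the ambient range $[0,1/\theta]^{\Crd(Q)}$ from Remark~\ref{remcompact} combined with the uniform bound $\lVert \boldsymbol{\nu}_\theta\rVert_\infty \leqq 1$ makes the Lipschitz constant $\Crd(Q)$ uniform across admissible quantization maps $\zeta_\eta$, so the bound applies for \emph{any} $\eta$-quantized approximation, despite the non-uniqueness noted after Definition~\ref{defetaapprox}.
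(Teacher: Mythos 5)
Your argument is essentially sound and reaches the same quantitative conclusion (a bound on $\infnrm{\boldsymbol{\nu}_\theta^\eta-\boldsymbol{\nu}_\theta}$ that is linear in $\eta$ with a $1/\theta$-type constant), but it is organized differently from the paper. The paper works with the concrete matrices $A=\theta\big[\mathbb{I}-(1-\theta)\mathscr{P}(\Pi)\big]^{-1}$ and $B=\big[\mathbb{I}-(1-\theta)\mathscr{P}(\Pi)\big]^{-1}(1-\theta)\big(\Pi-\mathscr{P}(\Pi)\big)$, expands $\boldsymbol{\nu}_\theta^\eta$ as a series $\sum_k B^kA\chi^{[k]}$ over the number of observed events, pushes the quantization error into perturbed characteristic vectors $\chi^{[k]}$, and then proves $\infnrm{A}\leqq 1$ and $\infnrm{B}\leqq 1-\theta$ by an explicit stochasticity/row-sum computation, obtaining $\infnrm{\boldsymbol{\nu}_\theta^\eta-\boldsymbol{\nu}_\theta}\leqq\eta/\theta$. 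You instead treat the $\eta$-quantized system abstractly as a finite, perfectly observable terminating PFSA (Remark~\ref{rem4p3}), view its measure as the fixed point of a Bellman operator whose contraction modulus $1-\theta$ comes directly from $\sum_\sigma\tilde{\pi}_\mathscr{E}(\alpha,\sigma)=1-\theta$, and put the quantization error into the next-state argument, which is Lipschitz-controllable precisely because of the linear representation $\langle\alpha,\boldsymbol{\nu}_\theta\rangle$ of Proposition~\ref{propentangledmu}; the standard perturbation bound then yields a constant $(1-\theta)\Crd(Q)/\theta$. What your route buys is that the phantom-automaton computation (the paper's Eq. bounding $\infnrm{B}$) is bypassed entirely, and the argument manifestly applies uniformly over all admissible quantizers $\zeta_\eta$; what it costs is the extra $\Crd(Q)$ factor (harmless for continuity) and two points you should tighten. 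First, with the paper's normalization the renormalized measure satisfies $\nu(\alpha)=\theta\chi_\mathscr{E}(\alpha)+\sum_\sigma\tilde{\pi}_\mathscr{E}(\alpha,\sigma)\nu(\Delta(\alpha,\sigma))$, i.e. the characteristic term in your operator should carry the factor $\theta$; since the same term appears identically in the quantized and unquantized identities, this does not change the residual estimate, but as written your fixed-point equation characterizes the unrenormalized measure. Second, you assert that the linear functional $\tilde{\nu}(\alpha)=\langle\alpha,\boldsymbol{\nu}_\theta\rangle$ satisfies the one-step Bellman identity at \emph{every} entangled state, whereas Proposition~\ref{propentangledmu} establishes the identity only at pure states and extends by scaling; a short verification (or an appeal to the recursion underlying the paper's proof of that proposition, where the normalization in $\tilde{\pi}_\mathscr{E}$ cancels against the unnormalized state) is needed to make this step explicit. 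Both are repairs of bookkeeping, not of the idea, and with them your proof is a valid alternative to the paper's.
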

\begin{proof}
Let $M_k$ be a sequence in $\mathbb{R}^{\Crd(Q)}$ such that $M_k\big \vert_i$ denotes the measure  of the expected state after $k \in \mathbb{N} \cup \{0\}$ observations for the chosen 
$\eta$-quantized approximation $\mathscr{E}^\eta_{(G,\p)}$ beginning from the pure state corresponding to 
$q_i \in Q$. We note that:
\begin{gather}
 \sum_{k=0}^\infty M_k = \boldsymbol{\nu}_\theta^\eta
\intertext{Furthermore, we have:}
 M_0 = A\chi^{[0]}
\end{gather}
where $A = \theta\B$ and $\chi^{[0]}$ is the perturbation of the characteristic vector $\chi$ due to quantization, implying that
\begin{gather}
 \vert \vert M_0 - A\chi \vert \vert_\infty \leqq \vert \vert A \vert \vert_\infty \eta
\end{gather}
Denoting $B = \B(1-\theta)\bigg (\Pi - \mathscr{P}(\Pi)\bigg )$, we note:
\begin{gather}
 M_k = B^kA\chi^{[k]}
\Longrightarrow \infnrm{M_k} \leqq \infnrm{B}^k \infnrm{A} \eta
\intertext{It then follows that we have:}
\infnrm{\boldsymbol{\nu}_\theta^\eta - \boldsymbol{\nu}_\theta} \leqq \bigg (\sum_k \infnrm{B}^k \bigg ) \infnrm{A} \eta \label{eq63}
\end{gather}
We claim that the following bounds are satisfied:
\begin{enumerate}
 \item $\displaystyle\infnrm{A} \leqq 1$
\item $\displaystyle\sum_k \infnrm{B}^k \leqq \frac{1}{\theta}$
\end{enumerate}
For the first claim, we note 
\begin{multline}
 \B = \sum_{k=0}^\infty \theta (1-\theta)^k\mathscr{P}(\Pi)^k \notag \\
\leqq_{ \textrm{\sffamily \textsc{Elementwise}}} \sum_{k=0}^\infty \theta (1-\theta)^k\Pi^k = \theta\big [ \mathbb{I} - (1 -\theta)\Pi \big ]^{-1}\notag
\end{multline}
The result then follows by noting that $\theta\big [ \mathbb{I} - (1 -\theta)\Pi \big ]^{-1}$ is  a stochastic matrix for all $\theta \in(0,1)$.
For the second claim, denoting $\boldsymbol{e} =[ 1 \cdots 1]^T$,  we conclude from stochasticity of $\Pi$:
\begin{gather}
 \big (\Pi - \P \big )  \boldsymbol{e}= \big [ \mathbb{I} - \P \big ]\boldsymbol{e} = \big [ \mathbb{I} - (1-\theta)\P \big ]\boldsymbol{e} - \theta\P\boldsymbol{e}\notag\\
\begin{split}
\Rightarrow\big [ \mathbb{I} - (1-\theta)\P \big ]^{-1}\big (\Pi - &\P \big ) \boldsymbol{e}
\\=  \boldsymbol{e}   &- \P \theta\big [ \mathbb{I} - (1-\theta)\P \big ]^{-1}\boldsymbol{e}
\end{split}
\notag \\
\Rightarrow \frac{1}{1-\theta}B\boldsymbol{e} = \bigg \{  \mathbb{I} - \theta\big [ \mathbb{I} - (1-\theta)\P \big ]^{-1}\bigg \}\boldsymbol{e}
+\theta\boldsymbol{e} \label{eq65}
\end{gather}
Since $B$ is a non-negative matrix, it follows from Eq.~\eqref{eq65} that: 
\begin{gather}
 \infnrm{ \frac{1}{1-\theta}B} = 1- \min_i \Big \{ \theta\big [ \mathbb{I} - (1-\theta)\P \big ]^{-1}\Bigg \vert_i \Big \} + \theta \notag\\
\intertext{Noting that $\displaystyle\theta\big [ \mathbb{I} - (1-\theta)\P \big ]^{-1} = \theta + \theta\sum_{k=1}^\infty \left ((1-\theta)\P\right )^k$, }
\infnrm{ \frac{1}{1-\theta}B} \leqq 1  - \theta + \theta \Rightarrow\infnrm{ \frac{1}{1-\theta}B} \leqq 1 \Rightarrow \infnrm{ B} \leqq 1-\theta \notag \\
\Rightarrow \sum_{k=0}^\infty \infnrm{B}^k \leqq \frac{1}{1 - (1-\theta)}  =    \frac{1}{\theta}
\end{gather}
Noting that $\boldsymbol{\nu}_\theta^0 = \boldsymbol{\nu}_\theta$ and $\theta > 0$, we conclude from Eq.~\eqref{eq63}:
\begin{gather}
 \forall \eta > 0, \infnrm{\boldsymbol{\nu}_\theta^\eta - \boldsymbol{\nu}_\theta^0} \leqq \eta \frac{1}{\theta}
\end{gather}
which implies that $\boldsymbol{\nu}_\theta^\eta$  is upper semi-continuous w.r.t. $\eta$ at $\eta= 0$. This completes the proof.
\end{proof}

\begin{lem}\label{lemoptpartial}
For  any plant \G with an unobservability map $\p$:
 the control actions generated by Algorithm~\ref{PrAlg3}
is optimal in the sense that
\begin{gather}
\mathbf{E} \left
(\int_0^t\hat{\nu}_\theta^{\star}(\tauup)\mathrm{d}\tauup \right
)\geqq \mathbf{E} \left
(\int_0^t\hat{\nu}_\theta^{\#}(\tauup)\mathrm{d}\tauup\right ) \
\forall t \in [0,\infty), \forall \theta \in (0,1)
\end{gather}
where $\hat{\nu}_\theta^\star(t)$ and $\hat{\nu}_\theta^{\#}(t)$ are the
instantaneous measures at time $t$ for control actions generated by
Algorithm~\ref{PrAlg3} and an arbitrary policy
respectively.
\end{lem}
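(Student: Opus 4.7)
The plan is to reduce the general case to the finite-state case already resolved in Lemma~\ref{lemassum2} by routing through the $\eta$-quantized approximations of Definition~\ref{defetaapprox} and passing to the limit $\eta \to 0^+$ using the semi-continuity estimate from Lemma~\ref{lemcontinuity}.

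First, for each fixed $\eta \in (0,1]$ the approximation $\mathscr{E}^\eta_{(G,\p)}$ is finite-state by construction, so the hypothesis of Lemma~\ref{lemassum2} is satisfied and Algorithm~\ref{PrAlg3} executed relative to $\mathscr{E}^\eta_{(G,\p)}$ yields a policy whose integrated expected instantaneous measure dominates that of any admissible comparison policy. Denoting the starred and $\#$ trajectories computed relative to the $\eta$-quantized system by $\hat\nu^{\star,\eta}_\theta(\tauup)$ and $\hat\nu^{\#,\eta}_\theta(\tauup)$, Lemma~\ref{lemassum2} gives
\begin{gather*}
\mathbf{E}\left(\int_0^t\hat\nu^{\star,\eta}_\theta(\tauup)\mathrm{d}\tauup\right) \geqq \mathbf{E}\left(\int_0^t\hat\nu^{\#,\eta}_\theta(\tauup)\mathrm{d}\tauup\right)
\end{gather*}
for every $t \in [0,\infty)$ and every $\theta \in (0,1)$.

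Next I would combine Lemma~\ref{lemcontinuity}, which supplies $\infnrm{\boldsymbol{\nu}^\eta_\theta - \boldsymbol{\nu}_\theta} \leqq \eta/\theta$, with the uniform bound $\infnrm{\alpha(\tauup)} \leqq 1/\theta$ coming from Lemma~\ref{lemobsstate}. H\"older's inequality then yields the sample-path estimate
\begin{gather*}
\left\vert\hat\nu^{\cdot,\eta}_\theta(\tauup) - \hat\nu^{\cdot}_\theta(\tauup)\right\vert = \left\vert\langle\alpha(\tauup),\boldsymbol{\nu}^\eta_\theta - \boldsymbol{\nu}_\theta\rangle\right\vert \leqq \frac{\Crd(Q)\,\eta}{\theta^2},
\end{gather*}
uniformly in $\tauup \in [0,t]$ and on every realization, for either policy. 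Finiteness of the underlying plant state set together with this uniform boundedness lets me interchange expectation and integration (as in the proof of Proposition~\ref{propmuint}) and then send $\eta \to 0^+$ on both sides of the preceding inequality, delivering the claim.

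The main obstacle is to verify that the enabling/disabling decisions taken by Algorithm~\ref{PrAlg3} on $\mathscr{E}^\eta_{(G,\p)}$ converge, as $\eta \to 0^+$, to those taken directly on $\mathscr{E}_{(G,\p)}$, so that the $\eta$-trajectories actually approach the limiting trajectories $\hat\nu^{\star}_\theta$ and $\hat\nu^{\#}_\theta$. To handle this I would exploit that the comparison predicate $\langle \alpha\Gamma^\sigma,\boldsymbol{\nu}_\star\rangle < \langle \alpha\Gamma^\sigma_\mathscr{D},\boldsymbol{\nu}_\star\rangle$ driving the algorithm is linear in $(\alpha,\boldsymbol{\nu}_\star)$, so an $O(\eta)$ perturbation can flip the decision only at entangled states lying within an $O(\eta)$ neighborhood of the separating hyperplane. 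These boundary sets shrink to measure zero in the limit, so the limiting supervisor and sample paths coincide almost everywhere with those induced on $\mathscr{E}_{(G,\p)}$, allowing the finite-approximation inequality to transfer to the limit and completing the proof.
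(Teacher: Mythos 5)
Your skeleton is the same as the paper's: establish the inequality for each finite-state $\eta$-quantized approximation via Lemma~\ref{lemassum2}, then pass to the limit $\eta \rightarrow 0^+$ using the estimate $\infnrm{\boldsymbol{\nu}_\theta^\eta - \boldsymbol{\nu}_\theta} \leqq \eta/\theta$ of Lemma~\ref{lemcontinuity} together with the boundedness of entangled states from Lemma~\ref{lemobsstate}. The divergence is in the one step you yourself flag as the main obstacle, and there your argument has a genuine gap. Your pointwise estimate $\vert \hat\nu^{\cdot,\eta}_\theta(\tauup) - \hat\nu^{\cdot}_\theta(\tauup)\vert = \vert\langle \alpha(\tauup), \boldsymbol{\nu}_\theta^\eta - \boldsymbol{\nu}_\theta\rangle\vert$ silently uses the \emph{same} trajectory $\alpha(\tauup)$ in both the quantized and unquantized systems; that is only legitimate once one knows the two systems execute identical controlled transition sequences. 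The paper secures exactly this: it argues there exists $\eta_\star > 0$ such that for all $\eta \leqq \eta_\star$ the disabling decisions, and hence the controlled transition sequences, are unchanged, and only then does Definition~\ref{defetaapprox} yield $\infnrm{\alpha^{[k]}_\eta - \alpha^{[k]}} \leqq \eta$ and the bound $\eta\left(1 + \tfrac{1}{\theta} + \tfrac{1}{\theta^2}\right)t$ on the difference of the integrated measures.

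Your substitute --- that the comparison predicate is linear, so decisions can flip only for entangled states within an $O(\eta)$ neighborhood of the separating hyperplane, and ``these boundary sets shrink to measure zero'' --- does not close this. First, no reference measure is specified on the (possibly infinite) entangled state set $Q_\mathscr{F}$ under which ``measure zero'' would imply that the controlled process visits such states with vanishing probability; the reachable entangled states are a discrete collection whose margins from the hyperplane may accumulate at zero, which is precisely why the paper's uniform $\eta_\star$ claim is the delicate point. Second, even granting that flips occur rarely, a single flipped decision causes the $\eta$-trajectory and the exact trajectory to separate by $O(1)$ for all subsequent time, so ``sample paths coincide almost everywhere'' does not follow and the error in $\int_0^t \hat\nu_\theta\,\mathrm{d}\tauup$ is no longer controlled by your $O(\Crd(Q)\,\eta/\theta^2)$ estimate. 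A workable repair in your spirit would be quantitative rather than measure-theoretic: when the margin at a decision point is below $C\eta$, linearity implies either choice is within $C\eta$ of optimal \emph{in value} at that state, and one must then propagate and sum these per-step value losses (in the manner of the $\Eps$-approximability argument of Proposition~\ref{propapprox}) to show the total degradation is $O(\eta)$; as written, your proposal asserts the conclusion of such an argument without performing it, whereas the paper instead freezes the decision sequence for $\eta \leqq \eta_\star$ and never has to account for flipped decisions at all.
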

\begin{proof}
First, we note that it suffices to consider terminating plants \Gt such that $\theta \leqq \theta_{min}$
(See Definition~\ref{defthetamin}) for the purpose of defining the optimal supervision policy~\cite{CR07}. Algorithm~\ref{PrAlg3} specifies 
the optimal control policy for plants with termination probability $\theta$ when the set of entangled states is finite (Lemma~\ref{lemassum2}).
We claim that the result is true when this finiteness condition stated in Eq. \eqref{eqcont} is relaxed.
The argument is as follows: The optimal control policy as stated in Algorithm~\ref{PrAlg3}
for finite $Q_\mathscr{F}$ can be paraphrased as
\begin{itemize}
 \item \textit{ Maximize language measure for every state offline}
\item \textit{ Follow the measure gradient online}
\end{itemize}
Since  $\Crd(Q_\mathscr{F}^\eta) < \infty$, it follows from Lemma~\ref{lemassum2} that such a policy yields the optimal decisions for
an $\eta$-quantized approximation of \E for any $\eta > 0$. As we approach $\eta=0$, we note that it follows from continuity
that there exists $\eta_\star > 0$ such that the sequence of disabling decisions do not change for all $\eta \leqq \eta_\star$
implying that the optimally controlled transition sequence is identical for all $\eta \leqq \eta_\star$. Since
it is guaranteed by Definition~\ref{defetaapprox} that for identical transition sequences, 
quantized entangled states $\alpha_\eta^{[k]}$ are within $\eta$-balls
of actual entangled state $\alpha^{[k]}$ after the $k^{th}$ observation, we conclude
\begin{gather}
 \forall k, \forall \eta \in(0,\eta_\star], \infnrm{\alpha^{[k]}_{\eta} - \alpha^{[k]}} \leqq \eta
\end{gather}
It therefore follows that for any control policy, we have
\begin{multline}
 \forall \eta \in (0,\eta_\star], \ 
\left\vert\int_0^t\hat{\nu}_\theta^{\eta}(\tauup)\mathrm{d}\tauup - \int_0^t\hat{\nu}_\theta(\tauup)\mathrm{d}\tauup\right \vert \\
\leqq \int_0^t \Big \vert \langle \alpha_\eta^{[k]},\boldsymbol{\nu}_\theta^{\eta}\rangle
 - \langle \alpha^{[k]},\boldsymbol{\nu}_\theta\rangle  \Big \vert\mathrm{d}\tauup 
\leqq \eta \left (1 + \frac{1}{\theta} + \frac{ 1}{\theta^2}\right )t
\end{multline}
implying that $\int_0^t\hat{\nu}_\theta^{\eta}(\tauup)\mathrm{d}\tauup$ is semi-continuous from above at $\eta = 0$ which completes the proof.
\end{proof}
\begin{prop}\label{propoptpartial}
 Algorithm~\ref{Algopoobsopt} correctly implements the optimal control policy for an arbitrary finite state plant \G with specified unobservability map $\p$.
\end{prop}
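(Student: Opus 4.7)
The plan is to show that Algorithm~\ref{Algopoobsopt} is simply an efficient online realization of the policy template described in the preliminary Algorithm~\ref{PrAlg3}, so that the optimality proof reduces to invoking the groundwork already laid by Proposition~\ref{propentangledmu} and Lemmas~\ref{lemassum2}--\ref{lemoptpartial}.

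The first step is to establish the key dimensional reduction. By Proposition~\ref{propentangledmu}, the renormalized language measure of any entangled state $\alpha \in Q_\mathscr{F}$ is $\nu_\theta^\mathscr{E}(\alpha) = \langle \alpha, \boldsymbol{\nu}_\theta \rangle$, so the measure of every (possibly uncountably many) entangled state is a linear functional of the fixed finite-dimensional vector $\boldsymbol{\nu}_\theta$ computed from the underlying plant $G$. Consequently, the offline computation in Algorithm~\ref{Algopoobsopt} needs only to produce $\boldsymbol{\nu}_\star$ for the underlying plant via Algorithms~\ref{Algorithm01}--\ref{Algorithm02}, whose correctness is assured by Proposition~\ref{proposition-4.2}. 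This single computation replaces the repeated recomputation that appears inside the while-loop of Algorithm~\ref{PrAlg3}.

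Next, I would verify that the online phase of Algorithm~\ref{Algopoobsopt} faithfully realizes the decision rule of Algorithm~\ref{PrAlg3}. The current entangled state $\alpha$ is initialized using the phantom automaton inverse $\big[\mathbb{I} - (1-\theta)\mathscr{P}(\Pi)\big]^{-1}$ applied to the initial pure state, and is updated after each observed event $\sigma$ by post-multiplication with either $\Gamma^\sigma$ (if $\sigma$ is enabled) or $\Gamma^\sigma_\mathcal{D}$ (if $\sigma$ is disabled); correctness of this tracking step is furnished by Propositions~\ref{propinputarcw} and \ref{proptransM} together with Definition~\ref{defgencontro}. The enable/disable test at the current $\alpha$ compares $\langle \alpha\Gamma^\sigma, \boldsymbol{\nu}_\star\rangle$ against $\langle \alpha\Gamma^\sigma_\mathcal{D}, \boldsymbol{\nu}_\star\rangle$, which is exactly the gradient-following rule in Algorithm~\ref{PrAlg3}.

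Finally, optimality in the sense of Definition~\ref{pdef} follows directly from Lemma~\ref{lemoptpartial}, which already handled the two subtle points: (i)~the generalized controllability induced by entanglement (Case~2 of Lemma~\ref{lemassum2} and Proposition~\ref{propmonotone}), and (ii)~the possibly infinite cardinality of $Q_\mathscr{F}$, resolved via the $\eta$-quantized approximations of Definition~\ref{defetaapprox} and the upper semi-continuity of $\boldsymbol{\nu}_\theta^\eta$ at $\eta = 0$ proved in Lemma~\ref{lemcontinuity}. Since Algorithm~\ref{Algopoobsopt} produces the same sequence of disabling decisions as Algorithm~\ref{PrAlg3} for every observation history, the integrated instantaneous measure inequality carries over verbatim. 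The only non-routine piece, and therefore the main obstacle, is to make the equivalence between the two algorithms precise when $\Crd(Q_\mathscr{F}) = \infty$: one must argue that replacing the in-loop recomputation of $\boldsymbol{\nu}_\star$ by its offline value does not alter any disabling decision, which again reduces to the semi-continuity bound $\infnrm{\boldsymbol{\nu}_\theta^\eta - \boldsymbol{\nu}_\theta} \leqq \eta/\theta$ of Lemma~\ref{lemcontinuity} combined with the fact that strict inequalities between distinct components of $\boldsymbol{\nu}_\star$ are preserved under sufficiently small $\eta$. This completes the intended proof sketch.
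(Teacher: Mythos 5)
Your overall route is the same as the paper's: observe that Algorithm~\ref{Algopoobsopt} is an online restatement of Algorithm~\ref{PrAlg3} and then invoke Lemma~\ref{lemoptpartial} (which already absorbs the partial-controllability and infinite-$Q_\mathscr{F}$ issues via Lemma~\ref{lemassum2}, Proposition~\ref{propmonotone}, Definition~\ref{defetaapprox} and Lemma~\ref{lemcontinuity}). However, you have omitted the one point that actually requires an argument in this reduction: Algorithm~\ref{Algopoobsopt} is \emph{not} a verbatim restatement of Algorithm~\ref{PrAlg3} --- its update steps (Lines 20 and 22) replace $\alpha\Gamma^\sigma$ and $\alpha\Gamma^\sigma_\mathscr{D}$ by their normalized versions $\mathcal{N}(\alpha\Gamma^\sigma)$ and $\mathcal{N}(\alpha\Gamma^\sigma_\mathscr{D})$. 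One must check that this does not alter any disabling decision. The check is short but essential: by Lemma~\ref{lemobsstate} every reachable entangled state is elementwise non-negative and nonzero, so $\sum_i \alpha_i > 0$ and $\mathcal{N}(\alpha)$ is a strictly positive scalar multiple of $\alpha$; since the test in Line 14 is a sign test on the linear functional $\alpha\big(\Gamma^\sigma - \Gamma^\sigma_\mathscr{D}\big)\boldsymbol{\nu}_\star$ and the updates are linear, the positive rescaling propagates multiplicatively and
\begin{gather*}
\mathop{sign}\left(\alpha\big(\Gamma^\sigma - \Gamma^\sigma_\mathscr{D}\big)\boldsymbol{\nu}_\star\right) = \mathop{sign}\left(\mathcal{N}(\alpha)\big(\Gamma^\sigma - \Gamma^\sigma_\mathscr{D}\big)\boldsymbol{\nu}_\star\right),
\end{gather*}
so the two algorithms generate identical control actions. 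Without this observation your claim that the algorithms ``produce the same sequence of disabling decisions for every observation history'' is asserted rather than proved.

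Conversely, the step you single out as ``the main obstacle'' --- moving the computation of $\boldsymbol{\nu}_\star$ out of the while-loop --- is a non-issue and needs no semi-continuity argument. In Algorithm~\ref{PrAlg3} the quantity recomputed each iteration is the optimal measure vector of the \emph{fixed underlying plant} $G$; it does not depend on the current entangled state, so hoisting it offline changes nothing. The infinite-cardinality and $\eta$-quantization machinery you invoke there has already done its work inside Lemma~\ref{lemoptpartial} and does not need to be re-deployed to justify the offline/online split. So the structure of your argument is right, but the normalization check is the missing (and, per the paper, the only substantive) ingredient.
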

\begin{proof}
 We first note that Algorithm \ref{Algopoobsopt} is a detailed  restatement of Algorithm~\ref{PrAlg3} with the exception of the normalization step in
Lines 20 and 22. On account of non-negativity of any entangled state $\alpha$ and the fact $\alpha \neq \boldsymbol{0}$ (See Lemma~\ref{lemobsstate}), we have:
\begin{gather}
\mathop{sign}\left (\alpha \big ( \Gamma^\sigma  - \Gamma^\sigma_\mathscr{D} \big )\right ) 
= \mathop{sign}\left(\mathcal{N}\left ( \alpha \right )\big ( \Gamma^\sigma  - \Gamma^\sigma_\mathscr{D} \big )\right )
\end{gather}
which verifies the the normalization steps.
The result then follows immediately from Lemma~\ref{lemoptpartial}.
\end{proof}
\begin{rem}
The normalization steps in Algorithm~\ref{Algopoobsopt} serve to mitigate numerical problems.  Lemma~\ref{lemobsstate} guarantees that the entangled state $\alpha \neq \boldsymbol{0}$. However, repeated right multiplication by the transition matrices may result in  entangled states with norms arbitrarily close to $0$ leading to 
numerical errors in comparing arbitrarily close floating point numbers. Normalization partially remedies this by ensuring that the entangled states used for the comparisons are 
sufficiently separated from $\boldsymbol{0}$. There is, however, still the issue of approximability and even with normalization, we may be needed to compare
arbitrarily close values. The next proposition addresses this by showing that, in contrast to MDP based models, the optimization algorithm for PFSA is indeed
$\Eps$-approximable~\cite{LGM01}, $i.e.$ deviation from the optimal policy is guaranteed to be small for small errors in value comparisons in Algorithm~\ref{Algopoobsopt}.
This further implies that the optimization algorithm is robust under small parametric uncertainties in the model as well as to errors arising from finite precision arithmetic in digital computer implementations.
\end{rem}
\vspace{0pt}
\begin{algorithm}[!ht]
 \small \SetLine
  \SetKwData{Left}{left}
  \SetKwData{This}{this}
  \SetKwData{Up}{up}
  \SetKwFunction{Union}{Union}
  \SetKwFunction{FindCompress}{FindCompress}
    \SetKw{Tr}{true}
   \SetKw{Tf}{false}
  \SetKwInOut{Input}{input}
  \SetKwInOut{Output}{output}
  \caption{Optimal Control under Partial Observation (Finalized Version)}\label{Algopoobsopt}
\Input{\G,$\p$}
\Output{Optimal Control Actions} 
\vspace{3pt} {\blue \hrule} \vspace{3pt}
\Begin( \tcc*[f]{{\blue \sffamily \textsc{Offline Execution}}}){
	Compute $\boldsymbol{\nu}_\star$\; 
	Set $\theta = \theta_{min}$\;
	Compute $M = \big [ \mathbb{I} - (1-\theta_{min})\mathscr{P}(\Pi) \big ]^{-1}$\;
	\For{$\sigma \in \Sigma$}{
		Compute $\Gamma^\sigma$\tcc*[r]{Algorithm~\ref{AlgGammaFNO}}
		Compute $\Gamma^\sigma_\mathscr{D}$\;
		Compute $T^\sigma = \big [\Gamma^\sigma - \Gamma^\sigma_\mathscr{D}\big ]\boldsymbol{\nu}_\star$\tcc*[r]{Column Vector}
	}
	$\mspace{-10mu}\displaystyle \begin{array}{cc}\mathrm{Initialize} \ \alpha_0 = [0 \cdots 1 \cdots 0]\\
             \mspace{20mu}\mathrm{(i_0^{th} \ element)} \uparrow
            \end{array}
	$\tcc*[r]{Init. state: $q_{i_0}$}
	Compute $\alpha = \alpha_0 M$\tcc*[r]{For $\omega$ s.t. $\p(q_i,\omega) = \epsilon$}
\vspace{3pt} {\red \hrule} \vspace{3pt}
	\While(\tcc*[f]{{\red \sffamily \textsc{Online Execution}}}){\Tr} 
	{ 
		\For{$\sigma \in \Sigma$ } 
		{
		\If{$\alpha T^\sigma < 0$}
		{
			Disable $\sigma$\tcc*[r]{{\color{ForestGreen}Control Action}}
		}
 		}
		Observe event $\sigma$\;   
		\eIf {$\sigma$ is disabled}
		{
			$\alpha = \mathcal{N}\big (\alpha \Gamma^\sigma_\mathscr{D} \big )$\;
		}
		{
			$\alpha = \mathcal{N}\big (\alpha \Gamma^\sigma \big )$\;
		}
	}
  }
\end{algorithm}
\vspace{0pt}
\begin{prop}\label{propapprox}
(Approximability) In a finite precision implementation of
Algorithm~\ref{Algopoobsopt}, with real numbers distinguished upto $\Eps > 0$,
$i.e.$,
\begin{gather}
 \forall a,b \in \mathbb{R},  \vert a -b \vert \leqq \Eps \Rightarrow a - b \equiv 0
\end{gather}
we have $\forall t \in [0,\infty), \forall \theta \in (0,1)$,
 \begin{gather}
 0 \leqq \mathbf{E} \left (\int_0^t\hat{\nu}_\theta^{\star}(\tauup)\mathrm{d}\tauup \right
)- \mathbf{E} \left
(\int_0^t\hat{\nu}_\theta^{\#}(\tauup)\mathrm{d}\tauup\right )  < \Eps
 \end{gather}
where $\hat{\nu}_\theta^{\star}(t)$ and $\hat{\nu}_\theta^{\#}(t)$ are
 the instantaneous measures at time $t$ for the exact ($i.e.$ infinite precision)
and approximate (upto $\Eps$-precision) implementations of the optimal policy respectively.
\end{prop}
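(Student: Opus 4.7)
The strategy is to localize the unique source of numerical sensitivity in Algorithm~\ref{Algopoobsopt} and then exploit the value-function interpretation of the compared quantity to translate $\Eps$-level precision of the comparisons directly into an $\Eps$-level bound on the integrated measure. A direct inspection of the algorithm reveals that finite-precision arithmetic only affects the sign test $\alpha T^\sigma < 0$ inside the online loop; every other online operation (the matrix products $\alpha\Gamma^\sigma$, $\alpha\Gamma^\sigma_\mathscr{D}$ and the normalization $\mathcal{N}(\cdot)$) merely propagates the current entangled state forward without any threshold decision. Consequently, the exact and approximate implementations can disagree on a disabling decision at the entangled state $\alpha$ only when $|\alpha T^\sigma| \leqq \Eps$.

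Next, I would invoke the identity $T^\sigma = (\Gamma^\sigma - \Gamma^\sigma_\mathscr{D})\boldsymbol{\nu_\star}$ built into the offline phase of the algorithm, which makes $\alpha T^\sigma$ precisely the optimal value-to-go differential $\langle \alpha\Gamma^\sigma, \boldsymbol{\nu_\star}\rangle - \langle \alpha\Gamma^\sigma_\mathscr{D}, \boldsymbol{\nu_\star}\rangle$ between the two candidate successor entangled states. Whenever the approximate implementation deviates from the exact one at a decision point, the successor entangled state it reaches therefore has optimal value-to-go within $\Eps$ of that of the successor selected by the exact algorithm. The left inequality $0 \leqq \mathbf{E}\left(\int_0^t \hat{\nu}^\star_\theta(\tauup)\mathrm{d}\tauup\right) - \mathbf{E}\left(\int_0^t \hat{\nu}^\#_\theta(\tauup)\mathrm{d}\tauup\right)$ is then immediate from Lemma~\ref{lemoptpartial}, which establishes optimality of the exact algorithm with respect to this integrated measure.

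For the strict upper bound, the plan is to exploit linearity of the instantaneous measure $\hat{\nu}_\theta(\tauup) = \langle \alpha(\tauup), \boldsymbol{\nu_\star}\rangle$ in the entangled state together with a telescoping estimate along sample paths. At each decision instance where the two implementations disagree, the expected incremental gap contributed to $\int_0^t \hat{\nu}$ is at most $|\alpha T^\sigma| \leqq \Eps$; moreover the fact that both policies are subsequently evaluated against the same measure $\boldsymbol{\nu_\star}$ and, via Proposition~\ref{Proposition2.2} and the Cesaro limit, drive the expected instantaneous measure to the same stationary value prevents the per-step discrepancies from compounding linearly in $t$. The main technical obstacle will be making this non-compounding claim rigorous; I would approach it by inducting over the number of observed transitions to establish $|\mathbf{E}\langle \alpha^\star(\tauup) - \alpha^\#(\tauup), \boldsymbol{\nu_\star}\rangle| \leqq \Eps$ uniformly in $\tauup$, after which integration over $[0,t]$ yields the desired uniform bound independent of the horizon.
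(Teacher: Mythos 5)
Your localization of the finite-precision sensitivity is incomplete, and this is the first genuine gap. You claim that $\Eps$-precision only affects the online sign test $\alpha T^\sigma < 0$, but line 2 of Algorithm~\ref{Algopoobsopt} is the offline computation of $\boldsymbol{\nu}_\star$ via Algorithm~\ref{Algorithm02}, whose iterations are themselves driven by threshold comparisons of measure values ($\boldsymbol{\nu}^{[k]}_j < \boldsymbol{\nu}^{[k]}_i$). Under $\Eps$-precision these comparisons can resolve differently, so the approximate implementation operates with a different supervised configuration $\Pi^\#$ and a different measure vector $\boldsymbol{\nu}^\#_\theta$; bounding this effect is in fact the bulk of the paper's proof. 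The paper exploits the algebraic identity underlying the Generalized Monotonicity result (Proposition~\ref{propmonotone}), namely $\boldsymbol{\nu}^\star_\theta-\boldsymbol{\nu}^\#_\theta = \theta\big[\mathbb{I}-(1-\theta)\Pi^\star\big]^{-1}(1-\theta)\big[\Pi^\star-\Pi^\#\big]\boldsymbol{\nu}^\#_\theta$, observes that wherever the two configurations disagree the compared measure entries differ by at most $\Eps$, and then uses stochasticity of $\Pi$ and of $\theta[\mathbb{I}-(1-\theta)\Pi^\star]^{-1}$ to conclude $\infnrm{\boldsymbol{\nu}^\star_\theta-\boldsymbol{\nu}^\#_\theta}<\Eps$ uniformly in the horizon; the partially observable case is then handled by viewing the entangled system as a perfectly observable plant with partial controllability when $\Crd(\QE)<\infty$, and by the $\eta$-quantization and continuity argument of Lemma~\ref{lemoptpartial} otherwise. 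None of this policy-level machinery appears in your proposal.

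The second and more serious gap is the non-compounding step, which you rightly flag as the main obstacle but do not close, and your proposed route would not close it. The assertion that both implementations ``drive the expected instantaneous measure to the same stationary value'' cannot be assumed: the exact and approximate implementations generally induce different supervised transition matrices with different Cesaro limits, and showing that the resulting limiting measures are within $\Eps$ of each other is essentially the content of the proposition, not a tool available for proving it. Moreover, even if your proposed induction gave $\vert\mathbf{E}\langle \alpha^\star(\tauup)-\alpha^\#(\tauup),\boldsymbol{\nu}_\star\rangle\vert\leqq\Eps$ uniformly in $\tauup$, integrating that pointwise bound over $[0,t]$ yields a bound of order $\Eps\, t$, which is neither independent of the horizon nor smaller than $\Eps$; so the final sentence of your argument does not deliver the stated inequality. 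The paper sidesteps this by bounding the gap once and for all at the level of measure vectors (a fixed-point/operator-norm estimate) rather than accumulating per-decision discrepancies along sample paths, and this is the ingredient your sample-path telescoping argument is missing.
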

\begin{proof}
Let \Gt be the underlying plant. First we consider 
the perfectly observable case, $i.e.$, 
 with every transition observable at the supervisory level.
Denoting the optimal and approximate measure vectors
obtained by Algorithm~\ref{Algorithm02} as $\boldsymbol{\nu}_\theta^\star$ and 
$\boldsymbol{\nu}_\theta^\#$, we claim:
\begin{gather}\label{eqclm}
 \boldsymbol{\nu}_\theta^\star-\boldsymbol{\nu}_\theta^\# \leqq_{\textrm{\sffamily \textsc{Elementwise}}} \Eps
\end{gather}
Using the algebraic structure of the Monotonicity Lemma~\cite{CR07} (Also see Lemma~\ref{propmonotone}), we obtain:
\vspace{-3pt}
\begin{align*}
 \boldsymbol{\nu}_\theta^\star-\boldsymbol{\nu}_\theta^\# = \theta
\big [ \mathbb{I} - (1-\theta) \Pi^\star \big 
]^{-1}(1-\theta) &\underbrace{\big [ \Pi^\star - \Pi^\# \big ]\boldsymbol{\nu}_\theta^\#} \\
& \mspace{50mu} \mathbf{M}
\end{align*}
We note that it follows from the exact optimality of $\boldsymbol{\nu}_\theta^\star$ that
\begin{gather}
  \boldsymbol{\nu}_\theta^\star-\boldsymbol{\nu}_\theta^\# \geqq_{\textrm{\sffamily \textsc{Elementwise}}} 0
\end{gather}
Denoting the $i^{th}$ row of the matrix $\mathbf{M}$ as $\mathbf{M}_i$, 
we note that $M_i$ is of the form $\sum_{j=1}^{\Crd(Q)} a_jb_j$ where 
\begin{gather}
 a_j \leqq \Pi_{ij} \label{eq69}\\
b_j =  \big \vert \boldsymbol{\nu}_\theta^\#\vert_i
- \boldsymbol{\nu}_\theta^\# \vert_j \big \vert 
\end{gather}
We note that the inequality in Eq. ~\eqref{eq69} follows from the fact 
that event enabling and disabling is a redistribution of the \textit{controllable}
part of the unsupervised transition matrix $\Pi$. Also, since
$\Pi^\#$, was obtained via $\Eps$-precision optimization, we have:
\begin{subequations}
\begin{gather}
 \left ( \boldsymbol{\nu}_\theta^\#\vert_i
> \boldsymbol{\nu}_\theta^\# \vert_j \right ) \bigwedge q_j
 \xrightarrow[\mathrm{disabled}]{\sigma}q_i  \Rightarrow \big \vert
 \boldsymbol{\nu}_\theta^\#\vert_i
- \boldsymbol{\nu}_\theta^\# \vert_j \big \vert \leqq \Eps \\
 \left ( \boldsymbol{\nu}_\theta^\#\vert_i
\leqq \boldsymbol{\nu}_\theta^\# \vert_j \right ) \bigwedge q_j
 \xrightarrow[\mathrm{enabled}]{\sigma}q_i  \Rightarrow \big \vert
 \boldsymbol{\nu}_\theta^\#\vert_i
- \boldsymbol{\nu}_\theta^\# \vert_j \big \vert \leqq \Eps
\end{gather}
\end{subequations}
It therefore follows from stochasticity of $\Pi$ that:
\begin{gather}
 \infnrm{M} < \infnrm{\Pi} \Eps = \Eps
\end{gather}
Hence , noting that $\infnrm{\theta
\big [ \mathbb{I} - (1-\theta) \Pi^\star \big 
]^{-1}}=1$, we have:
\begin{gather}
\infnrm{\boldsymbol{\nu}_\theta^\star-\boldsymbol{\nu}_\theta^\# } \leqq
\infnrm{\theta
\big [ \mathbb{I} - (1-\theta) \Pi^\star \big 
]^{-1}}\times \vert 1 - \theta \vert \times  \Eps < \Eps
\end{gather}
which proves the claim made in Eqn.~\eqref{eqclm}. It then follows 
from Lemma~\ref{lemremmeas}, that for the perfectly observable case we have
$\forall t \in [0,\infty), \forall \theta \in (0,1)$,
 \begin{gather}
 0 \leqq \mathbf{E} \left (\int_0^t\hat{\nu}_\theta^{\star}(\tauup)\mathrm{d}\tauup \right
)- \mathbf{E} \left
(\int_0^t\hat{\nu}_\theta^{\#}(\tauup)\mathrm{d}\tauup\right )  < \Eps
 \end{gather}
We recall that for a finite entangled state set \Qe, the entangled transition system can 
be viewed as a perfectly observable terminating plant 
(See Remark~\ref{rem4p3}) with possibly partial controllability implying
that we must apply the Generalized Monotonicity Lemma (See Lemma~\ref{propmonotone}). Noting that 
the above argument is almost identically applicable for
the Generalized Monotonicity Lemma,  it follows 
that the above result is true for any non-trivial 
unobservability map on the underlying plant
$G_\theta$ satisfying $\Crd(\QE) < \infty$. The extension to the
general case of infinite \Qe then follows from the application 
of the result to $\eta$-approximations of the entangled transition 
system for $\eta \leqq \eta_\star$ (See Lemma~\ref{lemoptpartial} for explanation of the bound $\eta_\star$) and recalling the continuity 
argument stated in Lemma~\ref{lemoptpartial}. This completes the proof.
\end{proof}
\vspace{3pt}

The performance of MDP or POMDP based models is computed as the total reward garnered by the agent in the course of operation. The analogous
notion for PFSA based modeling is  the expected value of integrated instantaneous characteristic $\int_0^t \hat{\chiup}(\tauup)\mathrm{d}\tauup$
(See Definition~\ref{definstchar}) as a function of operation time.
\vspace{3pt}
\begin{prop}
 \label{propmaxchar}(Performance Maximization:) The optimal control policy stated in 
Algorithm~\ref{Algopoobsopt} maximizes
infinite horizon performance in the sense of maximizing the expected integrated instantenous state characteristic (See Definition~\ref{definstchar}), $i.e.$,
\begin{gather}\label{eqchimax2}
\forall t \in[0,\infty), \ \mathbf{E} \left
(\int_0^t\hat{\chiup}^{\star}(\tauup)\mathrm{d}\tauup \right
)\geqq \mathbf{E} \left
(\int_0^t\hat{\chiup}^{\#}(\tauup)\mathrm{d}\tauup \right
) 
\end{gather}
where the instantenous characteristic, at time $t$, for the optimal ($i.e.$ as defined by Algorithm~\ref{Algopoobsopt})
and an arbitrary supervision policy is denoted by  $\hat{\chiup}^{\star}(t)$
and $\hat{\chiup}^{\#}(t)$ respectively. 
\end{prop}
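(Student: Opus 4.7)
The plan is to reduce Proposition~\ref{propmaxchar} to Lemma~\ref{lemoptpartial} by establishing an identity in expectation between the integrated instantaneous measure (which the optimal policy is already known to maximize) and the integrated instantaneous characteristic. This parallels the strategy employed in the perfectly observable case (Proposition~\ref{propmuint}), where Corollary~\ref{Corollary2.1}, i.e. $\Q\boldsymbol{\nu}_\theta=\Q\boldsymbol{\chi}$, was used to swap $\hat{\nu}_\theta$ with $\hat{\chiup}$ inside the expectation operator.

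First I would dispatch the case $\Crd(\QE)<\infty$. By Remark~\ref{rem4p3}, the entangled transition system \E is a perfectly observable terminating PFSA with uniform termination probability $\theta$, so the proof of Proposition~\ref{propmuint} transfers verbatim to \E. Applying Corollary~\ref{Corollary2.1} to the entangled system, together with Proposition~\ref{propentangledmu} (which identifies $\boldsymbol{\nu}_\theta^\mathscr{E}$ with $\boldsymbol{\nu}_\theta$ on the pure states) and Definitions~\ref{definstcharentang}--\ref{defninstmentangled}, yields $\mathbf{E}(\hat{\chiup}_\mathscr{E}(\tauup))=\mathbf{E}(\hat{\nu}_\theta(\tauup))$ for every $\tauup\in[0,\infty)$ and every supervision policy. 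Combining this with Lemma~\ref{lemoptpartial} (which guarantees that Algorithm~\ref{PrAlg3}, and therefore Algorithm~\ref{Algopoobsopt} by Proposition~\ref{propoptpartial}, maximizes the expected integrated instantaneous measure) delivers Eq.~\eqref{eqchimax2} after exchanging the expectation with the time integral via Fubini, justified by finiteness of $\QE$ and boundedness of $\boldsymbol{\chiup}\in[-1,1]^{\Crd(Q)}$.

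Next I would bridge the entangled-system characteristic to the underlying plant characteristic by proving $\mathbf{E}(\hat{\chiup}_\mathscr{E}(\tauup))=\mathbf{E}(\hat{\chiup}(\tauup))$ for all $\tauup\in[0,\infty)$. The argument rests on viewing the normalized marking $\mathcal{N}(\alpha(\tauup))$ as the conditional distribution of the actual plant state $q(\tauup)$ given the observed symbol sequence, so the tower property of conditional expectation collapses $\mathbf{E}\langle\alpha(\tauup),\boldsymbol{\chiup}\rangle$ to $\mathbf{E}\,\chi(q(\tauup))$. Finally, I would pass from finite $\QE$ to the general setting via the $\eta$-quantized approximations of Definition~\ref{defetaapprox}, invoking Lemma~\ref{lemcontinuity} and the stabilization-of-optimal-actions argument already used in the proof of Lemma~\ref{lemoptpartial} to let $\eta\to 0^+$ and carry the inequality over to the exact entangled transition system.

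The main obstacle is the identity $\mathbf{E}(\hat{\chiup}_\mathscr{E}(\tauup))=\mathbf{E}(\hat{\chiup}(\tauup))$. The subtlety is that the raw marking $\alpha(\tauup)$ produced by iterating the $\Gamma^\sigma$ matrices is not a probability vector: its coordinates record the cumulative conditional probability mass of all phantom paths consistent with the observed trace, so $\|\alpha(\tauup)\|_1$ is generally neither unity nor equal across observation histories (only bounded by $1/\theta$ via Lemma~\ref{lemobsstate}). A careful accounting is required to show that the scaling factors introduced by the unobservable prefix and suffix segments cancel in expectation over the observation process, so that $\mathbf{E}[\mathcal{N}(\alpha(\tauup))_i]=\Pr[q(\tauup)=q_i]$. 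This will likely require decomposing plant trajectories into maximal observable transitions interleaved with phantom segments and exploiting the recursive normalization built into $\tilde{\pi}_\mathscr{E}$ in Definition~\ref{defentangtran}, which already enforces $\sum_{\sigma}\tilde{\pi}_\mathscr{E}(\alpha,\sigma)=1-\theta$ independently of the unnormalized magnitude of $\alpha$.
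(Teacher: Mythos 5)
Your proposal follows essentially the same route as the paper's proof: dispatch the finite-$\QE$ case by treating the entangled transition system as a perfectly observable terminating PFSA (Remark~\ref{rem4p3}) so that the perfect-observation machinery of Proposition~\ref{propmuint} applies, bridge via $\mathbf{E}(\chi_\mathscr{E})=\mathbf{E}(\chi)$, and then pass to infinite $\QE$ through the $\eta$-quantized approximations and the continuity argument of Lemma~\ref{lemoptpartial}. The only step you single out as an obstacle is exactly the step the paper settles with a one-line assertion --- that the expected (unnormalized) entangled state $\mathbf{E}(\alpha)$ is the stationary state probability vector of the underlying plant, whence $\mathbf{E}(\chi_\mathscr{E})=\mathbf{E}(\chi)$ --- so your more careful accounting of the normalization issue is a refinement of, not a departure from, the published argument.
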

\begin{proof}
 We recall that the result is true for the case of perfect observation (See 
Eq.~\eqref{eqchimax}).
Next we recall from Remark~\ref{rem4p3}, that if 
the unobservability map is non-trivial, but has a finite $Q_\mathscr{F}$, 
then the entangled transition system $\mathscr{E}_{(G,\p)} $ 
can be viewed as a \textbf{perfectly observable} terminating model with uniform 
termination probability $\theta$. It therfore follows, that for such cases, we have:
\begin{gather}\label{eqchimaxEE}
\forall t \in[0,\infty), \ \mathbf{E} \left
(\int_0^t\hat{\chiup}_\mathscr{E}^{\star}(\tauup)\mathrm{d}\tauup \right
)\geqq \mathbf{E} \left
(\int_0^t\hat{\chiup}_\mathscr{E}^{\#}(\tauup)\mathrm{d}\tauup \right
) 
\end{gather}
We recall from the definition of entangled transition systems (See Definition~\ref{defentangtran}),
\begin{gather}
 \chi_\mathscr{E}(t) = \langle \alpha(t) ,\boldsymbol{\chi}\rangle
\end{gather}
where $\alpha(t)$ is the entangled state at time $t$, which in turn implies
that we have:
\begin{gather}
 \mathbf{E}(\chi_\mathscr{E}) = \langle \mathbf{E}(\alpha), \boldsymbol{\chi}\rangle
\end{gather}
Since $\mathbf{E}(\alpha)\vert_i$ is the expected sum of 
conditional probabilities of strings terminating on state $q_i$ of the
underlying plant, we conclude that $\mathbf{E}(\alpha)$ is  in fact
the stationary state probability vector corresponding to the underlying plant.
Hence it follows that 
$\mathbf{E}(\chi_\mathscr{E}) = \mathbf{E}(\chi)$ implying that 
for non-trivial unobservability maps that guarantee $\QE < \infty$, we have
\begin{gather}\label{eqchimax3}
\forall t \in[0,\infty), \ \mathbf{E} \left
(\int_0^t\hat{\chiup}^{\star}(\tauup)\mathrm{d}\tauup \right
)\geqq \mathbf{E} \left
(\int_0^t\hat{\chiup}^{\#}(\tauup)\mathrm{d}\tauup \right
) 
\end{gather}
The general result for infinite entangled state sets ($i.e.$
for unobservability maps which fail to gurantee $\QE < \infty$)
 follows from applying the above 
result to $\eta$-approximations (See Definition~\ref{defetaapprox})
of the entangled transition system and recalling the continuity result of Lemma~\ref{lemoptpartial}.
\end{proof}
\subsection{Computational Complexity}\label{subseccomplexity}
Computation of the supervision policy for an underlying plant with a non-trivial
unobservability map requires computation of $\boldsymbol{\nu}_\star$ (See Step 2 of Algorithm~\ref{Algopoobsopt}), $i.e.$, we need to execute Algorithm~\ref{Algorithm02}
first. It was conjectured and validated via extensive simulation in \cite{CR07} that
Algorithm~\ref{Algorithm02} can be executed with polynomial asymptotic runtime complexity.
Noting that each of the remaining steps of Algorithm~\ref{Algopoobsopt} can be executed 
with worst case complexity of $n \times n$ matrix inversion (where $n$ is the
size of the state set $Q$ of the underlying model), we conclude that 
the overall runtime complexity of 
proposed supervision algorithm 
is polynomial in number of underlying model states. Specifically, we have the 
following result:
\begin{prop}\label{propcomplexity}
 The runtime complexity of the offline portion of Algorithm~\ref{Algopoobsopt} ($i.e.$ upto line number 11) is same as that of Algorithm~\ref{Algorithm02}.
\end{prop}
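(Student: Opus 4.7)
The plan is to verify that each step after line 2 of Algorithm~\ref{Algopoobsopt} contributes at most the cost of a constant number of $n \times n$ matrix inversions and matrix--vector products (where $n = \Crd(Q)$), and to observe that this overhead is absorbed by the cost of Algorithm~\ref{Algorithm02}, which is invoked on line 2.

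First, I would note that line 2 directly invokes Algorithm~\ref{Algorithm02} to compute $\boldsymbol{\nu}_\star$, so the two algorithms trivially share that cost. This reduces the claim to showing that the additional lines 3--11 add no more than polynomial overhead whose dominant term is bounded by a single matrix inversion of size $n$, which is in turn already incurred (in fact, re-incurred in every refinement step) inside Algorithm~\ref{Algorithm02}.

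Next, I would inventory the remaining offline steps. Line 4 forms the phantom resolvent $M = [\mathbb{I} - (1-\theta_{min})\mathscr{P}(\Pi)]^{-1}$, a single $n \times n$ matrix inversion, at cost $O(n^3)$ (or $O(n^\omega)$ with fast matrix multiplication). The $\sigma$-indexed loop on lines 5--9 runs $m = \Crd(\Sigma)$ times; within each iteration, Algorithm~\ref{AlgGammaFNO} only needs to extract precomputed rows of $M$ (since $M$ is already cached), the disabled transition matrix $\Gamma^\sigma_\mathscr{D}$ is built by a linear scan of $\Gamma^\sigma$ using the controllability set $\mathscr{C}$ and the unobservability map $\p$, and $T^\sigma = (\Gamma^\sigma - \Gamma^\sigma_\mathscr{D})\boldsymbol{\nu}_\star$ is one $O(n^2)$ matrix--vector product. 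Lines 10--11 require two further $O(n^2)$ operations. The cumulative additional cost is therefore $O(n^3 + m n^2)$.

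Finally, I would argue that this added overhead cannot dominate the cost of Algorithm~\ref{Algorithm02}, which by its iterative structure performs at least one linear system solve (equivalently, one $n \times n$ inversion) per refinement step; consequently, the total runtime of lines 1--11 is asymptotically identical to that of Algorithm~\ref{Algorithm02} alone. The main subtlety I anticipate is confirming that Algorithm~\ref{AlgGammaFNO} need not re-invert $\mathbb{I} - (1-\theta_{min})\mathscr{P}(\Pi)$ inside each pass of the $\sigma$-loop; inspecting its construction shows it only reads rows of this precomputed inverse, keeping the per-event cost at $O(n^2)$, which is exactly what makes the total overhead absorbable and yields the claimed equality of asymptotic complexities.
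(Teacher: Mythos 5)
Your proposal is correct and follows essentially the same route as the paper: it charges line 2 to Algorithm~\ref{Algorithm02} itself (whose cost is $M(n\times n)\times O(I)$) and then bounds lines 3--11 by a single $n\times n$ inversion plus lower-order matrix--vector work, which is absorbed asymptotically. Your extra observation that Algorithm~\ref{AlgGammaFNO} only reads rows of the precomputed inverse is a slightly more detailed accounting than the paper gives, but it is the same argument.
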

\begin{proof}
The asymptotic runtime complexity of Algorithm~\ref{Algorithm02}, as shown in \cite{CR07}, is $M(n\times n)\times O(I)$ where 
$M(n\times n)$ is the complexity of $n\times n$ matrix inversion and $O(I)$ is the asymptotic bound on the number of iterations on Algorithm~\ref{Algorithm02}.
The proof is completed by noting that the  complexity of executing lines 3 to 11 of Algorithm~\ref{Algopoobsopt} is $M(n\times n)$.
\end{proof}
\vspace{3pt}
\begin{rem}
It is immediate that the online portion of Algorithm~\ref{Algopoobsopt} has the runtime complexity of Matrix-Vector multiplication.
It follows that the measure-theoretic optimization of partially observable plants is no harder to solve that those with perfect observation.
\end{rem}

The results of this section establish the following facts:
\begin{enumerate}
 \item Decision-theoretic processes modeled in the PFSA framework 
can be efficiently optimized via maximization of the corresponding
language measure.
\item The optimization problem for infinite horizon problems is shown to be 
$\Eps$-approximable,
and  the solution procedure presented in this paper is robust to modeling uncertainties and
computational approximations. This is a significant advantage over 
POMDP based modeling, as discussed in details in Section~\ref{secintroneg}.
\end{enumerate}
%
%
\section{Verification In Simulation Experiments}\label{secvalid}
The theoretical development of the previous sections
is next validated on two simple decision problems.

The first example consists of a four state mission execution model. 
The underlying plant is illustrated in Figure~\ref{figsim}.
The physical interpretation of the states and events is enumerated in
Tables~\ref{tabsim1} and \ref{tabsim2}. $G$ is the ground, initial or mission 
abort state. We assume the mission to be important; hence abort is
assigned a negative characteristic value of $-1$. $M$ represents
correct execution and therefore has a positive characteristic of $0.5$.
The mission moves to state $E$ on encountering possible
system faults (event $d$) from states $G$ and $M$. Any further
system faults or an attempt to execute
the next mission step under such  error conditions results
in a transition to the critical state $C$. The only way to correct the 
situation is to execute fault recovery protocols denoted by $r$. 
However, execution of $r$ from
the correct mission execution state $M$ results in an abort.  
Occurrence of system faults $d$ are uncontrollable from every state.
Furthermore, under system criticality, we have sensor failure 
resulting in unobservability of further system faults and success
of recovery attempts, $i.e.$, the events $d$ and $r$ are unobservable from state $C$.
\begin{figure}[!h]
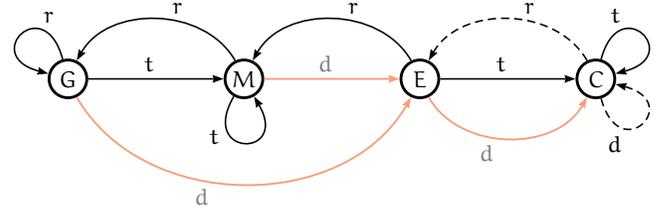

\centering
\VCDraw[1.3]{%
\begin{VCPicture}{(-1,-2.5)(10,2)}
\State[G]{(0,0)}{G} \State[M]{(3,0)}{M}
\State[E]{(6,0)}{E} \State[C]{(9,0)}{C}
\EdgeL{G}{M}{t}
\LoopNW{G}{r}
\DimEdge\SetEdgeLineColor{Melon}\EdgeL{M}{E}{d} \SetEdgeLineColor{black}\RstEdge
\EdgeL{E}{C}{t}
\LoopS{M}{t}
\LoopNE{C}{t}
\SetLArcAngle{60}
\DimEdge\SetEdgeLineColor{Melon}\LArcR{E}{C}{d} \SetEdgeLineColor{black}\RstEdge
\LArcR{M}{G}{r}
\LArcR{E}{M}{r}
\SetEdgeLineStyle{dashed}
\LoopSE{C}{d}
\LArcR{C}{E}{r}
\SetEdgeLineStyle{solid}
\SetLArcAngle{60}\DimEdge\SetEdgeLineColor{Melon}
\LArcR{G}{E}{d} \SetEdgeLineColor{black}\RstEdge
\SetLArcAngle{30}
\end{VCPicture}}
\caption{Underlying plant model with four states $Q = \{G,M,E,C\}$ and alphabet $\Sigma = \{t,r,d\}$: unobservable
 transitions are denoted by dashed
arrows ($ \boldsymbol{-- \mspace{-5mu}\rightarrow} $); uncontrollable but 
observable transitions are shown dimmed ($\color{Melon} \leftarrow $).}\label{figsim}
\end{figure}
\begin{table}[!ht]
\begin{minipage}{3.5in}
\caption{State Descriptions, Event Occurrence Probabilities \& Characteristic Values}\label{tabsim1}
\centering\begin{tabular}{||c|l|ccc|c||}
\hline & \sffamily \textsc{\bf Physical Meaning} & t &r & d & $\boldsymbol{\chi}$%
\\ \hline \hline $G$ & \sffamily \textsc{Ground/Abort} &$0.8$ & $0.05$ & $0.15$ & $-1.00 $%
\\\hline $M$ & \sffamily \textsc{Correct Execution} &$0.5$ & $0.30$ & $0.20$ & $\phantom{-}0.50 $%
\\ \hline $E$ & \sffamily \textsc{System Fault} &$0.5$ & $0.20$ & $0.30$ & $-0.20$%
\\ \hline $C$ & \sffamily \textsc{System Critical}&$0.1$ & $0.10$ & $0.80$ & $-0.25$%
\\ \hline
\end{tabular}
\end{minipage}\vspace{10pt}
\begin{minipage}{3.5in}
 \caption{Event Descriptions}\label{tabsim2}
\centering\begin{tabular}{||c|l|}
\hline & \sffamily \textsc{\bf Physical Meaning} \\ \hline %
\hline t & \sffamily \textsc{Execution of  Next Mission Step/Objective Successful}\\%
\hline r & \sffamily \textsc{Execution of Repair/Damage Recovery Protocol} \\ %
\hline d & \sffamily \textsc{System Fault Encountered}\\ %
\hline%
\end{tabular}
\end{minipage}
\end{table}\vspace{0pt}

The event occurrence probabilities are tabulated in Table~\ref{tabsim1}. We note 
that the probabilities of successful execution of mission steps (event $t$) and
damage recovery protocols (event $r$) are both small under system criticality in state 
$C$. Also, comparison of the event probabilities from states $M$ and $E$ 
reveals that the probability of encountering further errors is higher once some error has already occurred and the
probability of successful repair is  smaller. 

We simulate the controlled execution of the above described mission under the following three 
strategies:
\begin{enumerate}
 \item Null controller: No control enforced
\item Optimal control under perfect observation: Control enforced using Algorithm~\ref{Algoonlineopt} given that all transitions
are observable at the supervisory level
\item Optimal control under partial observation: Control enforced using Algorithm~\ref{Algopoobsopt} given the
above described unobservability map
\end{enumerate}

The optimal renormalized measure vector of the system under full observability is computed
to be $[ -0.0049 \ -0.0048 \ -0.0049 \ -0.0051]^T$ . Hence we observe  in Figure~\ref{figcontrol} that the
gradient of the instantaneous measure under perfect observation converges to around $0.005$. We note that the gradient for the 
instantaneous measure under partial observation converges close to the former value. The null controller,
of course, is the significantly poor.
\begin{figure}[!ht]
\flushleft
 \includegraphics[width=3in]{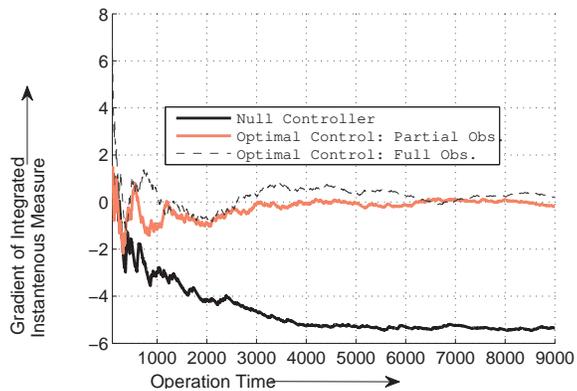}
\caption{Gradient of integrated instantaneous measure as a function of operation time}\label{figcontrol}
\end{figure}

The performance of the various control strategies are compared based on the expected value of the
integrated instantenous characteristic $\mathbf{E} \left
(\int_0^t\hat{\chiup}(\tauup)\mathrm{d}\tauup \right
)$.
The simulated results are shown in Figure~\ref{figcontrolchi}. 
The null controller performs worst;  and the optimal control strategy under perfect observation performs best. As expected
the strategy in which we blindly use the optimal control for perfect observation 
(Algorithm~\ref{Algoonlineopt}) under the given non-trivial unobservability map is 
exceedingly poor  and close-to-best performance is recovered using the optimal control algorithm under partial observation.
\begin{figure}[!ht]
\centering
 \includegraphics[width=3in]{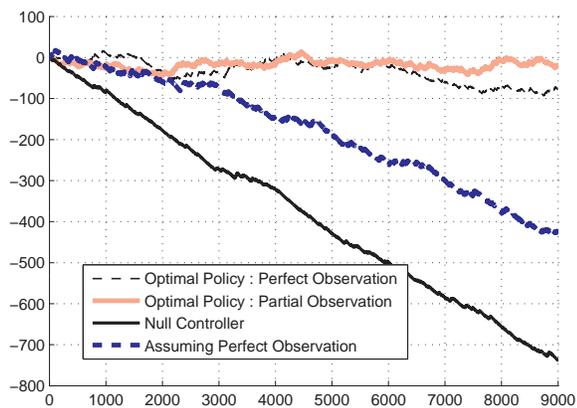}
\caption{Performance as a function of operation time}\label{figcontrolchi}
\end{figure}
%

The second example is one that originally appeared in the context of POMDPs in \cite{Cass94}. The
physical specification of the problem is as follows: The player is given a choice between 
opening one of two closed doors; one has a reward in the room behind it, the other has a tiger.
Entering the latter incurrs penalty in the form of bodily injury. The player can also choose
to \textit{listen} on the doors; and attempt to figure out which room has the tiger. The game resets
after each play; and the tiger and the reward is randomly placed in the rooms at the beginning of each such play.
Listening on the doors doesnot enable the player to accurately determine the location of the tiger; it merely
makes her odds better. However, listening incurrs a penalty; it costs the player if she chooses to listen.
The scenario is pictorially illustrated in the top part of Figure~\ref{figsimT}. We model the physical situation 
in the PFSA framework as shown in the bottom part of Figure~\ref{figsimT}.

\begin{figure}[!h]
\centering
 \includegraphics[width=1in]{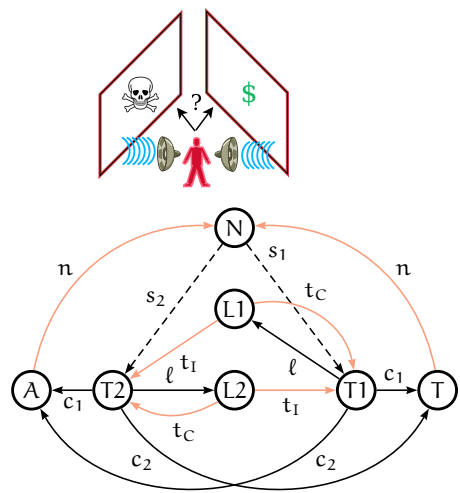}
\VCDraw[0.9]{%
\begin{VCPicture}{(-7,-3.5)(5,5)}
\State[A]{(-5,0)}{A} \State[T2]{(-3,0)}{T2}
\State[L2]{(0,0)}{L2} \State[T1]{(3,0)}{T1} 
\State[T]{(5,0)}{T} \State[L1]{(0,2)}{L1} \State[N]{(0,4)}{N}
\EdgeL{T2}{A}{c_1}
\EdgeL{T2}{L2}{\ell}
\SetEdgeLineColor{Melon}
\EdgeL{L2}{T1}{}\LabelR{t_I}
\SetEdgeLineColor{black}
\EdgeL{T1}{T}{c_{1}}
\SetEdgeLineStyle{dashed}
\EdgeL{N}{T2}{}\LabelR[0.5]{s_{2}}
\EdgeL{N}{T1}{}\LabelL[0.15]{s_{1}}
\SetEdgeLineStyle{solid}
\SetEdgeLineColor{Melon}
\EdgeL{L1}{T2}{}\LabelL{t_I}
\SetEdgeLineColor{black}
\EdgeL{T1}{L1}{}\LabelL{\ell}
\SetLArcAngle{30}
\SetEdgeLineColor{Melon}
\LArcL{L2}{T2}{t_C}
\SetEdgeLineColor{black}
\SetLArcAngle{30}
\SetLArcAngle{60}
\LArcL[0.15]{T1}{A}{c_2}
\LArcR[0.15]{T2}{T}{c_2}
\SetLArcAngle{30}
\SetLArcAngle{40}
\SetEdgeLineColor{Melon}
\LArcL{A}{N}{n}
\LArcR{T}{N}{n}
\SetLArcAngle{30}
\SetLArcAngle{45}
\LArcL{L1}{T1}{t_C}
\SetEdgeLineColor{black}
\SetLArcAngle{30}
\end{VCPicture}}
\caption{TOP: Illustration of the physical scenario, BOTTOM: Underlying plant model with seven states  and eight alphabet symbols: unobservable
 transitions are denoted by dashed
arrows ($ \boldsymbol{-- \mspace{-5mu}\rightarrow} $); uncontrollable but 
observable transitions are shown dimmed ($\color{Melon} \rightarrow $).}\label{figsimT}
\end{figure}
\begin{table}[!ht]
\begin{minipage}{1.5in}
\caption{State Descriptions}\label{tabsimT1}
\centering\begin{tabular}{||c|l|ccc|c||}
\hline & \sffamily \textsc{\bf Physical Meaning} %
\\ \hline \hline $N$ & \sffamily \textsc{Game Init} %
\\\hline $T1$ & \sffamily \textsc{Tiger in 1} %
\\ \hline $T2$ & \sffamily \textsc{Tiger in 2} %
\\ \hline $L1$ & \sffamily \textsc{Listen: Tiger in 1}%
\\ \hline $L2$ & \sffamily \textsc{Listen: Tiger in 2}%
\\ \hline $T$ & \sffamily \textsc{Tiger Chosen}%
\\ \hline $A$ & \sffamily \textsc{Award Chosen}%
\\ \hline
\end{tabular}
\end{minipage}\hspace{2pt}
\begin{minipage}{2in}
 \caption{Event Descriptions}\label{tabsimT2}
\centering\begin{tabular}{||c|l|}
\hline & \sffamily \textsc{\bf Physical Meaning} \\ \hline %
\hline $s_1$ & \sffamily \textsc{Tiger Placed in 1} {\red (unobs.)}\\%
\hline $s_2$ & \sffamily \textsc{Tiger Placed in 2} {\red (unobs.)}\\ %
\hline $\ell$ & \sffamily \textsc{Choose Listen} {\blue (cont.)}\\ %
\hline $t_{c}$ & \sffamily \textsc{Correct Determination}\\%
\hline $t_{I}$ & \sffamily \textsc{Incorrect Determination} \\ %
\hline $c_1$ & \sffamily \textsc{Choose 1} {\blue (cont.)}\\ %
\hline $c_2$ & \sffamily \textsc{Choose 2} {\blue (cont.)}\\ %
\hline $n$ & \sffamily \textsc{Game Reset}\\ %
\hline%
\end{tabular}
\end{minipage}
\end{table}\vspace{0pt}
  \begin{table}[!ht]
\caption{Event Occurrence Probabilities \& Characteristic Values}\label{tabsimT3}
\centering
\small
$\begin{array}{||c||c|cccccccc||ccccc}\hline
 & \chi  & s_1 \msp & s_2 \msp & \ell \msp & t_C \msp & t_I \msp & c_1 \msp & c_2 \msp &n \\\hline
 N  & \phantom{-}0.00  &     0.5    \msp &0.5         \msp &0         \msp &0         \msp &0                  \msp &0         \msp &0         \msp &0\\
    T1  & -0.25  &     0         \msp &0    \msp &0.33         \msp &0         \msp &0             \msp &0.33    \msp &0.33         \msp &0\\
   T2  & -0.25  &      0         \msp &0    \msp &0.33        \msp &0         \msp &0             \msp &0.33    \msp &0.33         \msp &0\\
   L1  & -0.75  &      0         \msp &0         \msp &0    \msp &0.8    \msp &0.2                 \msp &0         \msp &0         \msp &0\\
   L2  & -0.75  &      0         \msp &0         \msp &0         \msp &0.8         \msp &0.2             \msp &0         \msp &0         \msp &0\\
   T  & -1.00  &      0         \msp &0         \msp &0         \msp &0         \msp &0                  \msp &0         \msp &0    \msp &1\\
  A  &\phantom{-}1.00  &       0         \msp &0         \msp &0         \msp &0         \msp &0                  \msp &0         \msp &0    \msp &1\\\hline
 \end{array}$
\end{table}\vspace{0pt}

The PFSA has seven states $Q=\{N,T1,T2,L1,L2,T,A\}$ and eight alphabet symbols $\Sigma = \{ s_1,s_2, \ell,t_C,t_I,c_1,c_2,n\}$.
The physical meanings of the states and alphabet symbols are enumerated in Tables~\ref{tabsimT1} and \ref{tabsimT2} respectively.
The characteristic values and the event generation probabilities are tabulated in Table~\ref{tabsimT3}.
States $A$ and $T$ have characteristics of $1$ and $-1$ to reflect award and bodily injury. The listening states $L1$ and $L2$ also have negative
characteristic ($-0.75$) in accordance with the physical specification. An interesting point is the assignment of negative characteristic to 
the states $T1$ and $T2$; this prevents the player from choosing to disable \textit{all} controllable
moves from those states. Physically, this precludes the possibility that the player chooses to \textit{not} play at all and 
sits in either of those states forever; which may turn out to be the optimal course of action if the states $T1$ and $T2$ are not negatively weighted.

Figure~\ref{figcontrolmapT} illustrates the difference in the event disabling patterns resulting from the different strategies. We note that
the  the optimal controller under
perfect observation never disables  event $\ell$ (event no. 3), since the player never needs to \textit{listen} if she already knows which room has the
reward. In case of partial observation, the player decides to selectively listen to improve her odds. Also, note that the optimal policy under
 partial observation enables events significantly more often as compared to the optimal policy under perfect observation. The game actually proceeds via different 
routes in the two cases; hence it does not make sense to compare the 
control decisions after a given number of observation ticks; and  
the differences in the event disabling patterns must be interpreted only in an overall statistical sense.

We compare the simuation results in Figures~\ref{figcontrolT} and \ref{figcontrolchiT}. We note that in contrast to the
first example, the performance obtained for the optimally supervised partially observable case is significantly lower compared
to the situation under full observation. This arises from the physical problem at hand; it is clear 
that it is impossible in this case to have comparable performance in the two cases since the 
possibility of incorrect choice is significant and cannot be eliminated. The expected entangled state and the stationary probability vector
on the underlying model states is compared in Figure~\ref{figcompent} as an illustration for the result in Proposition~\ref{propmaxchar}.

\begin{figure}[t]
\centering
 \includegraphics[width=3.5in]{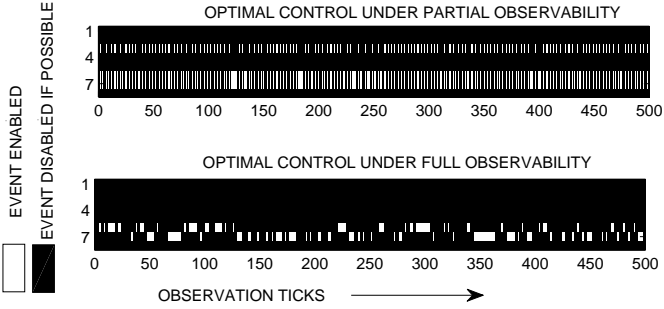}
\caption{Control maps as a function of observation ticks}\label{figcontrolmapT}
\end{figure}
\begin{figure}[!ht]
\flushleft
 \includegraphics[width=3in]{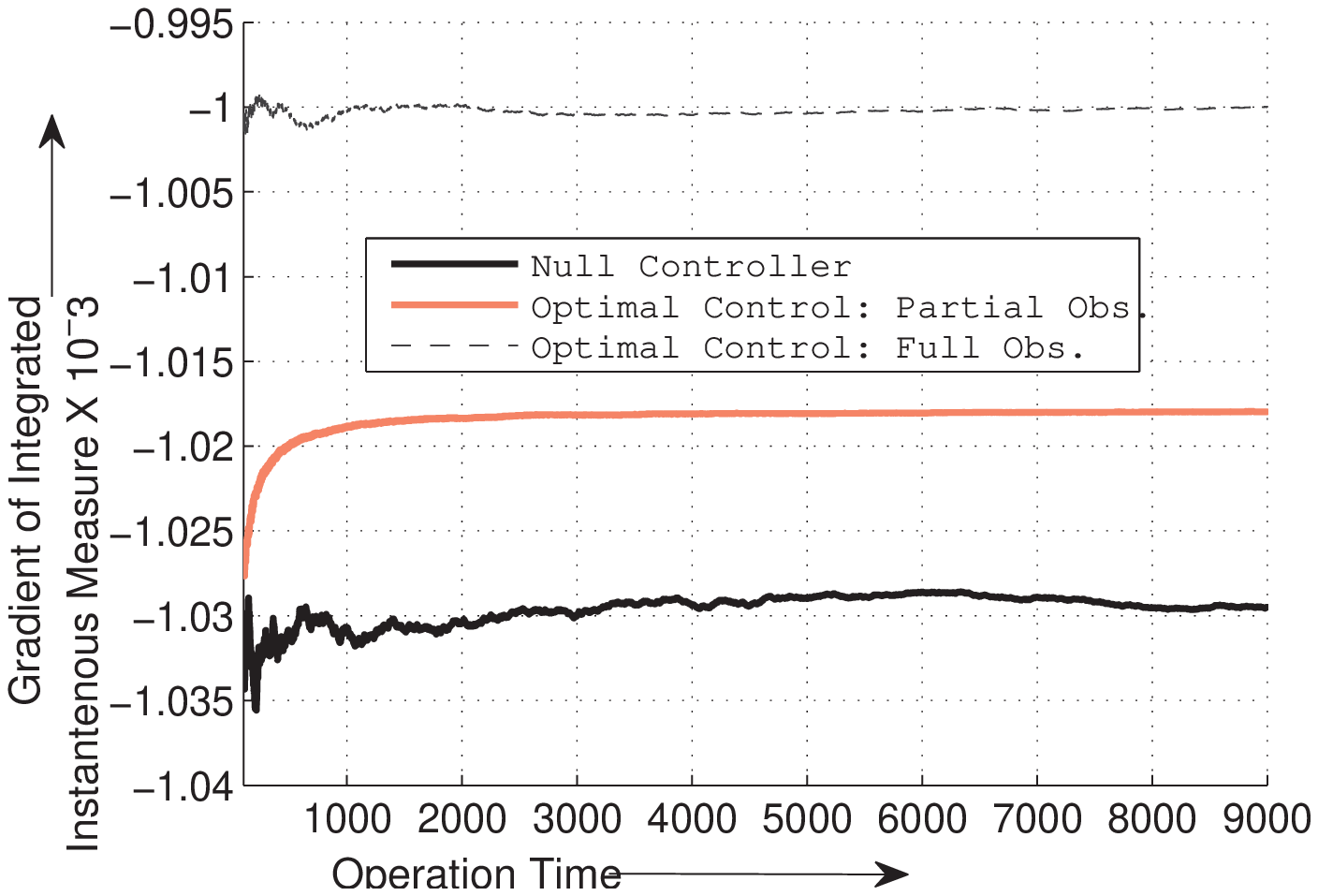}
\caption{Gradient of integrated instantaneous measure as a function of operation time}\label{figcontrolT}
\end{figure}

\begin{figure}[!ht]
\centering
 \includegraphics[width=3in]{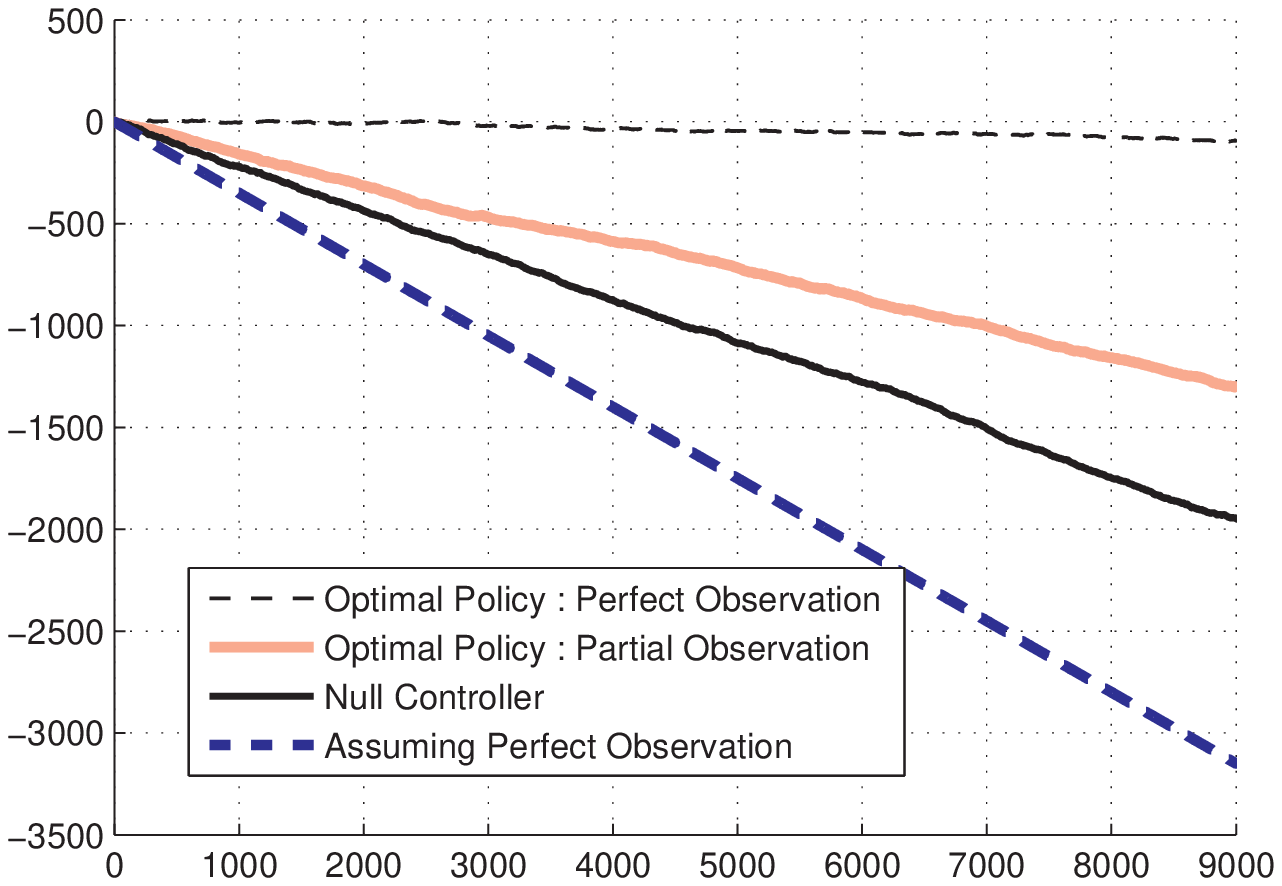}
\caption{Performance as a function of operation time}\label{figcontrolchiT}
\end{figure}
\begin{figure}[!ht]
\centering
 \includegraphics[width=3in]{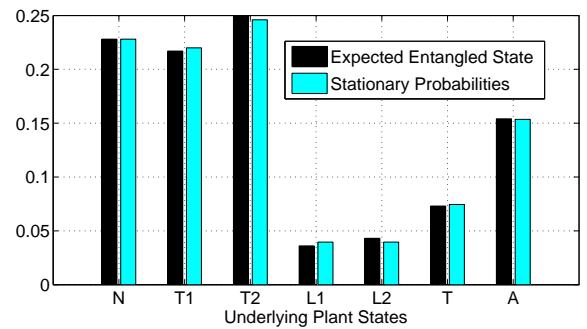}
\caption{Comparison of expected entangled state with the stationary probability vector on the underlying plant states for the optimal policy under partial observation}\label{figcompent}
\end{figure}
\section{Summary, Conclusions \& Future Work}\label{secsum}
In this paper we present an alternate framework based on probabilistic finite state machines (in the sense of Garg~\cite{G92,G92-2})
for modeling partially observable decision problems and
establish key advantages of the proposed approach over the current state of art. Namely, we show that,
the PFSA framework results in approximable
problems, $i.e.$, small changes in the model parameters or small numerical  errors result
 in small deviation in the obtained solution. Thus one is guranteed to obtain near optimal implementations
of the proposed supervision algorithm in a computationally efficient manner. This is a significant improvement over the
current state of art in POMDP analysis; several negative results exist that imply it is impossible to obtain 
a near-optimal supervision policy for arbitrary POMDPs in an efficient manner, unless certain complexity classes collapse (See detailed discussion
in Section~\ref{secintroneg}). The key tool used in this paper is the recently reported notion of renormalized measure of probabilistic regular languages.
We extend the measure theoretic optimization technique for perfectly observable probabilistic finite state automata to obtain an online implementable
supervision policy for finite state underlying plants, for which one or more transitions are unobservable at the supervisory level. It is further shown that
the proposed supervision policy maximizes the infinite horizon performance in a sense very similar to that generally used in the POMDP framework;
in the latter the optimal policy maximizes the total reward garnered by the plant in the course of operation, while in the former, it is shown, that
the expected value of the integrated instantaneous state characteristic is maximized. Two simple decision problems are included  as examples to illustrate the
theoretical development.
\subsection{Future Work}\label{subsecfutwork}
Future work will address the following areas:
\begin{enumerate}
\item Validation of the proposed algorithm in real-life systems with special emphasis of probabilistic robotics, and 
detailed comparison with the POMDP based approach with respect to computational complexity. 
 \item Generalization of the proposed technique to handle unobservability maps with unbounded memory; $i.e.$, unobservability maps that
result in infinite state phantom automata.
\item Adaptation of the proposed approach to solve finite horizon decision problems.
\end{enumerate}
}
\useRomanappendicesfalse
\appendices
\renewcommand{\thesection}{\Alph{section}}
\renewcommand{\thesectiondis}{\Alph{section}}
\renewcommand{\thesubsectiondis}{\Alph{section}.\arabic{subsection}.}
\renewcommand{\thesubsubsectiondis}{\Alph{section}.\arabic{subsection}.\arabic{subsubsection}}
\renewcommand{\thesubsection}{\Alph{section}.\arabic{subsection}}
\renewcommand{\thesubsubsection}{\Alph{section}.\arabic{subsection}.\arabic{subsubsection}}
\renewcommand{\thesection}{\Alph{section}}
\renewcommand{\thedefn}{\thesection.\arabic{defn}}
\renewcommand{\theprop}{\thesection.\arabic{prop}}
\renewcommand{\thethm}{\thesection.\arabic{thm}}
\renewcommand{\thelem}{\thesection.\arabic{lem}}
\renewcommand{\thecor}{\thesection.\arabic{cor}}
\section{Generalized Monotonicity Lemma}\label{appmono}
The following proposition is a slight generalization of the corresponding
result reported in \cite{CR07}  required to handle cases where
the effect of event disabling is not always a self-loop
at the current state but produces a pre-specified
reconfiguration, $e.g.$,
\begin{quote}
 \textit{Disabling $q_i\xrightarrow{\sigma}q_j$ results in
$q_i \xrightarrow{\sigma} q_k$ }
\end{quote}
Note that for every state $q_i \in Q$, 
it is pre-specified where each event $\sigma$ will terminate on been disabled. This 
generalization is critical to address the partial controllability issues arising from
partial observation at the supervisory level.
\begin{prop} \label{propmonotone} \textbf{(Monotonicity)}
Let \Gt be  reconfigured to $G_\theta^\# = (Q,\Sigma,\delta^\#,(1-\theta)\widetilde{\Pi}^\#,\boldsymbol{\chi},\mathscr{C})$ as follows:
$\forall i,j,k \in \{1,\,2,\,\cdots \,,\,n\}$, the $(i,j)^{th}$
element $\Pi_{ij}^{\#}$ and the $(i, k)^{th}$ element
$\Pi_{ik}^{\#}$ of $\Pi^{\#}$ are obtained as:
\begin{align}\left.
\begin{array}{c}
\Pi_{ij}^{\#} = \Pi_{ij} + \beta_{ij} \\ \Pi_{ik}^{\#} = \Pi_{ik} -
\beta_{ij}\end{array} \right \} \ & \textrm{if} \
\mu_j > \mu_k \ \textrm{with} \ \beta_{ij} > 0 \nonumber \\
\left.
\begin{array}{c}
\Pi_{ij}^{\#} = \Pi_{ij} \\ \Pi_{ik}^{\#} = \Pi_{ik}
\end{array} \right \} \ & \textrm{if} \ \mu_j = \mu_k
 \\
\left.\begin{array}{c}
\Pi_{ij}^{\#} = \Pi_{ij} - \beta_{ij} \\ \Pi_{ik}^{\#} = \Pi_{ik} +
\beta_{ij}\end{array} \right \} \ & \textrm{if} \ \mu_j < \mu_k \
\textrm{with} \ \beta_{ij} > 0  \nonumber
\end{align}
Then for the respective measure vectors  be $\boldsymbol{\nu}_{\theta}$ and
$\boldsymbol{\nu}^{\#}_\theta$, 
\begin{gather} \boldsymbol{\nu}^{\#}_\theta \geqq_{\textrm{\sffamily \textsc{Elementwise}}}
\boldsymbol{\nu}_{\theta} \ \ \forall \theta \in (0,1)
\end{gather} with equality holding if and only if $\Pi ^{\#}=\Pi.$
\end{prop}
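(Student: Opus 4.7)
The plan is to mimic the structure of the original Monotonicity Lemma proof in \cite{CR07}, but with the self-loop replaced by a transition to the pre-specified state $q_k$. The central tool will be the standard perturbation identity for the renormalized measure. Starting from $[\mathbb{I} - (1-\theta)\Pi]\boldsymbol{\nu}_\theta = \theta\boldsymbol{\chi}$ and its analogue $[\mathbb{I} - (1-\theta)\Pi^\#]\boldsymbol{\nu}^\#_\theta = \theta\boldsymbol{\chi}$, I would subtract and rearrange to obtain
\begin{gather}
\boldsymbol{\nu}^{\#}_\theta - \boldsymbol{\nu}_\theta = (1-\theta)\bigl[\mathbb{I} - (1-\theta)\Pi^{\#}\bigr]^{-1}(\Pi^{\#} - \Pi)\boldsymbol{\nu}_\theta.
\end{gather}
This reduces the claim to showing that the right-hand side is elementwise non-negative.

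First I would verify that $\bigl[\mathbb{I} - (1-\theta)\Pi^{\#}\bigr]^{-1}$ is an elementwise non-negative matrix for all $\theta \in (0,1)$. Since $\Pi^{\#}$ is a (sub)stochastic non-negative matrix and $(1-\theta) < 1$, the Neumann series $\sum_{k\geq 0}(1-\theta)^k (\Pi^{\#})^k$ converges to this inverse and each term is elementwise non-negative, so the same holds for the sum. This part is standard and essentially identical to the argument in \cite{CR07}.

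The combinatorial heart of the proof is to show that $(\Pi^{\#} - \Pi)\boldsymbol{\nu}_\theta \geqq_{\textrm{\sffamily \textsc{Elementwise}}} \boldsymbol{0}$. For each row index $i$, the construction of $\Pi^\#$ perturbs only paired entries: an increase of $\beta_{ij}$ at column $j$ is matched by a decrease of $\beta_{ij}$ at column $k$ (or vice versa), with the sign determined by the ordering of $\mu_j$ vs.\ $\mu_k$. Summing over all such pairs, the $i^{th}$ row contribution to $(\Pi^{\#} - \Pi)\boldsymbol{\nu}_\theta$ takes the form $\sum \beta_{ij}(\nu_{\theta,j} - \nu_{\theta,k})$ where the summation is restricted to those pairs for which $\mu_j > \mu_k$. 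Because $\boldsymbol{\nu}_\theta = \theta\boldsymbol{\mu}$ preserves the ordering of $\boldsymbol{\mu}$, each term is non-negative, establishing the required inequality. Combining with the non-negativity of $\bigl[\mathbb{I} - (1-\theta)\Pi^{\#}\bigr]^{-1}$ gives $\boldsymbol{\nu}^{\#}_\theta \eg \boldsymbol{\nu}_\theta$. Equality reduces to every $\beta_{ij}(\nu_{\theta,j}-\nu_{\theta,k})$ vanishing; reading this back through the construction forces $\Pi^{\#} = \Pi$.

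The main obstacle I anticipate is not analytical but bookkeeping: unlike the original self-loop case where the redirected column $k$ was always the diagonal index $i$, here $k$ is an arbitrary pre-specified target. One must verify that the pairing of perturbations remains coherent in the general case -- that is, that every decrease in $\Pi_{ik}$ is exactly compensated by some matching increase in $\Pi_{ij}$ within the same row (so stochasticity is preserved), and that the sign of each resulting contribution lines up with the $\mu_j$ vs.\ $\mu_k$ ordering. Once this row-by-row pairing is formalized carefully, the proof goes through as an essentially routine extension of the argument in \cite{CR07}.
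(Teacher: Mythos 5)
Your proposal is correct and follows essentially the same route as the paper's own proof: the same perturbation identity $\boldsymbol{\nu}^{\#}_\theta - \boldsymbol{\nu}_\theta = [\mathbb{I}-(1-\theta)\Pi^{\#}]^{-1}(1-\theta)(\Pi^{\#}-\Pi)\boldsymbol{\nu}_\theta$, non-negativity of the inverse, and the row-by-row sign analysis of $(\Pi^{\#}-\Pi)\boldsymbol{\nu}_\theta$ via the paired $\beta_{ij}$ perturbations and the ordering of $\boldsymbol{\nu}_\theta$ (equivalently $\boldsymbol{\mu}$), including the same treatment of the equality case. No substantive difference from the paper's argument.
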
 \vspace{3pt}
\begin{proof} From the definition of renormalized measure (Definition~\ref{defrenormmeas}), we have
\begin{align*}\label{optifull-eqmtl}
\boldsymbol{\nu}^{\#}_\theta-\boldsymbol{\nu}_\theta &=
\theta\left[{I-(1-\theta)\Pi ^{\#}}\right]^{-1}-\theta\left[ {I-(1-\theta)\Pi}\right]^{-1}\boldsymbol{\chi}\nonumber \\
 &=\left[I-(1-\theta)\Pi^{\#}\right]^{-1}(1-\theta)\left(\Pi^{\#}-\Pi\right)\boldsymbol{\nu}_{\theta}
\end{align*}
Defining the matrix $\Delta \triangleq
\Pi^{\#}-\Pi$, and the $i^{th}$ row of
$\Delta$ as $\Delta_i$, it follows that
\begin{gather}
{\Delta_i} \boldsymbol{\nu}_{\theta}=\sum\limits_j {\Delta _{ij} }\boldsymbol{\nu}_{\theta}\vert_j
= \sum_j  \beta_{ij}\Gamma_{ij} 
\intertext{where}
\Gamma_{ij} = \left \{
\begin{array}{cc} (\boldsymbol{\nu}_{\theta}\vert_k - \boldsymbol{\nu}_{\theta}\vert_j) & \textrm{if} \
\boldsymbol{\nu}_{\theta}\vert_k > \boldsymbol{\nu}_{\theta}\vert_j \\ 0 & \textrm{if} \ \boldsymbol{\nu}_{\theta}\vert_k = \boldsymbol{\nu}_{\theta}\vert_j \\
(\boldsymbol{\nu}_{\theta}\vert_j - \boldsymbol{\nu}_{\theta}\vert_k) & \textrm{if} \ \boldsymbol{\nu}_{\theta}\vert_k < \boldsymbol{\nu}_{\theta}\vert_j
\end{array} \right. \Longrightarrow  \Gamma_{ij} \geqq 0 \ \forall i,j \nonumber
\end{gather}
Since $\forall j, \ \sum_{i=1}^n\Pi_{ij} = \sum_{i=1}^n\Pi^{\#}_{ij} = 1$, it follows from non-negativity of
$\Pi$, that $[I-(1-\theta)\Pi^{\#}]^{-1}\geqq_{\textrm{\sffamily \textsc{Elementwise}}}
\mathbf{0}$. Since $\beta_{ij}  > 0 \ \forall \ i,j $, it follows
that $ {\Delta_i} \boldsymbol{\nu }_\theta \ \geq \ 0 \ \forall i \
\Rightarrow \boldsymbol{\nu}^{\#}_\theta\geqq_{\textrm{\sffamily \textsc{Elementwise}}} \boldsymbol{\nu }_\theta$. For
$\boldsymbol{\nu}_{\theta}\vert_j \ne 0$ and $\Delta$ as defined above, $\Delta_i
\boldsymbol{\nu }_\theta=0$ if and only if $\Delta =0$. Then,
$\Pi^{\#}=\Pi$ and
$\boldsymbol{\nu}^{\#}_\theta= \boldsymbol{\nu}_\theta$.
\end{proof}\vspace{0pt}
\section{Pertinent Algorithms For Measure-theoretic Control}\label{appen}
This section enumerates the pertinent algorithms for computing the
optimal supervision policy for a perfectly observable plant \G. For proof of correctness the 
reader is referred to \cite{CR07}.

In Algorithm~\ref{Algorithm01}, we use the following notation:
 $M_0 = \IPq$ , $M_1 = \Bigg [\mathbb{I}-\IPq \Bigg ]$,  $M_2 = \mathop{inf}_{\alpha
\ne 0}\bigg | \bigg| \big [\mathbb{I} - \mathbf{P} + \alpha
\mathscr{P}\big ]^{-1} \bigg |\bigg |_{\infty}$
\vspace{0pt}
Also, as defined earlier, $\Q$ is the stable probability distribution which can be computed using
methods reported widely in the literature~\cite{St97}.

\renewcommand{\thesection}{\Alph{section}}
\renewcommand{\thedefn}{\thesection.\arabic{defn}}
\renewcommand{\theprop}{\thesection.\arabic{prop}}
\renewcommand{\thethm}{\thesection.\arabic{thm}}
\renewcommand{\thelem}{\thesection.\arabic{lem}}
\restylealgo{linesnumbered,ruled,vlined,noend}
\begin{algorithm}[H]
 \footnotesize 
  \SetKwData{Left}{left}
  \SetKwData{This}{this}
  \SetKwData{Up}{up}
  \SetKwFunction{Union}{Union}
  \SetKwFunction{FindCompress}{FindCompress}
  \SetKwInOut{Input}{input}
  \SetKwInOut{Output}{output}
  \SetKw{Tr}{true}
   \SetKw{Tf}{false}
  \caption{Computation of Optimal Supervisor}\label{Algorithm02}
\Input{$\mathbf{P}, \ \boldsymbol{\chi}, \ \mathscr{C}$}
\Output{Optimal set of disabled transitions
$\mathscr{D}^{\star}$} \Begin{ Set
$\mathscr{D}^{[0]}=\emptyset$, $\widetilde{\Pi}^{[0]}=\widetilde{\Pi} $, $\theta^{[0]}_{\star} = 0.99$, $k \ = \ 1$\;
 \While{($ \texttt{Terminate}$ == \Tf)}{
 Compute $\theta_{\star}^{[k]}$\tcc*[r]{Algorithm~\ref{Algorithm01}}
 Set $\widetilde{\Pi}^{[k]} = \frac{1-\theta_{\star}^{[k]}}{1-\theta_{\star}^{[k-1]}}\widetilde{\Pi}^{[k-1]}$\;
 Compute $\boldsymbol{\nu}^{[k]}$ \;
 \For{$j = 1$ \textbf{to} $n$}{
\For{$i = 1$ \textbf{to} $n$}{
 Disable  all controllable
  $q_i \xrightarrow[]{\sigma} q_j$ s.t.
$\boldsymbol{\nu}^{[k]}_j < \boldsymbol{\nu}^{[k]}_i $ \;
 Enable all controllable   $q_i \xrightarrow[]{\sigma} q_j$
 s.t.
$\boldsymbol{\nu}^{[k]}_j \geqq \boldsymbol{\nu}^{[k]}_i $ \;
} } Collect all disabled transitions in $\mathscr{D}^{[k]}$\;
\eIf{$\mathscr{D}^{[k]} ==
\mathscr{D}^{[k-1]}$}{$\texttt{Terminate} =$ \Tr \;}{$k \ = \
k \ + \ 1$ \;}
 }
 $\mathscr{D}^{\star} \ = \ \mathscr{D}^{[k]}$
 \tcc*[r]{Optimal disabling set}
  }\vspace{0pt}
\end{algorithm}
\begin{algorithm}[H]
 \footnotesize 
  \SetKwInOut{Input}{input}
  \SetKwInOut{Output}{output}
  \SetKw{Tr}{true}
   \SetKw{Tf}{false}
  \caption{Computation of the Critical Lower Bound $\theta_{\star}$ }\label{Algorithm01}
\Input{$\mathbf{P}, \ \boldsymbol{\chi}$}
\Output{$\theta_{\star}$} \Begin{ Set $\theta_{\star} =
1$, $\theta_{curr} = 0$, Compute $\Q$ , $M_0$ ,
 $M_1 $,
  $M_2 $\;
 \For{$j = 1$ \textbf{to} $n$}{
\For{$i = 1$ \textbf{to} $n$}{ \eIf{$\left (\Q
\boldsymbol{\chi} \right )_i - \left (\Q \boldsymbol{\chi}
\right )_j \neq 0$}{$\theta_{curr} = \frac{1}{8M_2}\big \vert
\left (\Q \boldsymbol{\chi} \right )_i - \left (\Q
\boldsymbol{\chi} \right )_j \big \vert $}{
\For {$r=0$ \textbf{to} $n$}{ \eIf{$\left
(M_0\boldsymbol{\chi} \right )_i \neq \left
(M_0\boldsymbol{\chi} \right )_j$}{\textbf{Break}\;}{\If
{$\left ( M_0 M_1^r \boldsymbol{\chi} \right )_i \neq \left (
M_0 M_1^r \boldsymbol{\chi} \right )_j$}{\textbf{Break}\;} } }
\eIf{$r==0$}{$\theta_{curr} = \frac{\vert \left \{ (M_0 -\Q
)\boldsymbol{\chi} \right \}_i - \left \{ (M_0 -\Q
)\boldsymbol{\chi} \right \}_j \vert}{8M_2} $\;} {\eIf { $r >
0$
 \textbf{AND} $ r \leq n $ }{ $ \theta_{curr} = \frac{\vert
\left (M_0 M_1\boldsymbol{\chi} \right )_i - \left (M_0 M_1
\boldsymbol{\chi} \right )_j \vert }{2^{r+3}M_2} $ \;}{ $
\theta_{curr} = 1 $ \;}}}
$\theta_{\star}$ = $\mathrm{min} (
\theta_{\star} , \theta_{curr} ) $ \;
 } } }
\end{algorithm}
%
%
\begin{algorithm}[H]
\restylealgo{linesnumbered,ruled,noline,noend}
 \footnotesize 
  \SetKwData{Left}{left}
  \SetKwData{This}{this}
  \SetKwData{Up}{up}
  \SetKwFunction{Union}{Union}
  \SetKwFunction{FindCompress}{FindCompress}
  \SetKwInOut{Input}{input}
  \SetKwInOut{Output}{output}
  \SetKw{Tr}{true}
   \SetKw{Tf}{false}
  \caption{Computation of Phantom Automaton}\label{AlgoPh}
\Input{$Q$, $\Sigma$, $\widetilde{\pi}$, Unobservability map $\p$}
\Output{$\mathscr{P}(\Pi)$}
\Begin{
Set $\widetilde{\pi}^{\mathscr{P}} = \widetilde{\pi}$\;
\For{$i=1$ \textbf{to} $n$}{
\For{$j=1$ \textbf{to} $m$}{
\If{$\p(q_i,\sigma_j) = \sigma_j$}{
$\tilde{\pi}^{\mathscr{P}}_{ij} = 0$\tcc*[r]{Delete
transition}}
}}
\For{$i=1$ \textbf{to} $n$}{
\For{$j=1$ \textbf{to} $n$}{
$\mathscr{P}(\Pi)_{ij} =
\sum_{k:\delta(q_i,\sigma_k) =
q_j}\tilde{\pi}^{\mathscr{P}}_{ik}$\; }} }
\end{algorithm}
\begin{algorithm}[H]
 \restylealgo{linesnumbered,ruled,noline,noend}
 \footnotesize 
  \SetKwData{Left}{left}
  \SetKwData{This}{this}
  \SetKwData{Up}{up}
  \SetKwFunction{Union}{Union}
  \SetKwFunction{FindCompress}{FindCompress}
  \SetKwInOut{Input}{input}
  \SetKwInOut{Output}{output}
  \caption{Petri Net observer}\label{Alg2}
\dontprintsemicolon
\Input{$\langle G,p \rangle$}
\Output{Petri net observer} \Begin{I. Create a place $q_j$ for
each state $q_j$ in $\langle G,p \rangle$;\\
II. The set of transition labels is $\Sigma$;\\
\For{each \textbf{observable} transition $q_j
\xrightarrow[\sigma]{} q_k$ in $\langle G,p \rangle$ }{ I. Set
the initial state in $\langle G,p \rangle$ to $q_k$;\\
II. Compute
$\overline{Q}(\epsilon)$;\\
III. Add a transition labeled $\sigma$ from the place $q_j$
with
output arcs to all places $q_l \in \overline{Q}(\epsilon)$;\\
    }
    \For{each place $q_j$ in the net}{
    \For{each event $\sigma \in \Sigma$}{
    \If{there is no transition with label $\sigma$ from $q_j$}{
     I. Add a flush-out arc with label $\sigma$ from $q_j$}
      }
    }
  }
\end{algorithm}
%
\begin{algorithm}[H]
\restylealgo{linesnumbered,ruled,noline,noend}
 \footnotesize 
  \SetKwData{Left}{left}
  \SetKwData{This}{this}
  \SetKwData{Up}{up}
  \SetKwFunction{Union}{Union}
  \SetKwFunction{FindCompress}{FindCompress}
  \SetKwInOut{Input}{input}
  \SetKwInOut{Output}{output}
  \caption{Online computation of possible states}\label{Alg3}
\dontprintsemicolon
\Input{Petri net observer, Observed sequence $\omega = \tau_1\tau_2 \ldots \tau_r $}
\Output{$\overline{Q}(\omega)$} \Begin{I. Compute the initial
marking for the observer as follows: \\
\hspace{10pt} a. Compute $\overline{Q}(\epsilon)$;\\
\hspace{10pt} b. Put a token in each place $q_j \in
\overline{Q}(\epsilon)$;\\ \For{$j = 1$ to $r$}{ I. Fire all
enabled transitions labeled $\tau_j$;\\
\For{ each place $q_j$ in the observer}{ \If{ number of tokens
in $q_j$ > 0 }{ I. Normalize the number of tokens in $q_j$ to
$1$.
   }
  }
 }
II. $\overline{Q}(\omega) = \big \{ q_j \ \vert q_j \
\textrm{has one token} \big \}$ ;
 }
\end{algorithm}
\bibliographystyle{siam}
\bibliography{FULL_BIB}
\end{document}